\numberwithin{equation}{section}
\newtheorem{cor}[equation]{Corollary}
\newtheorem{lem}[equation]{Lemma}
\newtheorem{prop}[equation]{Proposition}
\newtheorem{thm}[equation]{Theorem}
\newtheorem{quest}[equation]{Question}
\newtheorem{fact}[equation]{Fact}
\newtheorem{Example}[equation]{Example}
\newenvironment{ex}{\begin{Example}\rm}{\end{Example}}
\newtheorem{remark}[equation]{Remark}
\newenvironment{rmk}{\begin{remark}\rm}{\end{remark}}
\def\co{\colon\thinspace}
\newcommand{\Int}{\mbox{Int}}
\newcommand{\Iso}{\mbox{Iso}}
\newcommand{\Min}{\mbox{Min}}
\newcommand{\cd}{\mbox{cd}}
\newcommand{\ab}{\mathrm{ab}}
\newcommand{\e}{\varepsilon}
\def\a{\alpha}
\def\G{\Gamma}
\def\g{\gamma}
\def\b{\beta}
\def\d{\partial}
\def\S1{\bf S^1}
\begin{document}

\title[Obstructions to nonpositive curvature for open manifolds]
{\bf Obstructions to nonpositive curvature for open manifolds}
\thanks{\it 2000 Mathematics Subject classification.{\rm\ 
Primary 53C21, Secondary 57N15, 57S30.}
{\it\ Keywords:\rm\ nonpositive curvature, Busemann function, obstructions}}\rm


\author{Igor Belegradek}

\address{Igor Belegradek\\School of Mathematics\\ Georgia Institute of
Technology\\ Atlanta, GA 30332-0160}\email{ib@math.gatech.edu}


\date{}

\begin{abstract} 
We study algebraic conditions on a group $G$
under which every properly discontinuous,
isometric $G$-action on a
Hadamard manifold has a $G$-invariant 
Busemann function, and for such $G$ we prove the following:
\newline
(1) every open complete nonpositively curved Riemannian 
$K(G,1)$ manifold that is homotopy equivalent to 
a finite complex of codimension $\ge 3$
is an open regular neighborhood of a subcomplex
of the same codimension; (2) each tangential
homotopy type contains infinitely
many open $K(G,1)$ manifolds that admit
no complete nonpositively curved metric even though
their universal cover is the Euclidean space. 
A sample application is that
an open contractible manifold $W$ is homeomorphic
to a Euclidean space if and only if $W\times S^1$
admits a complete Riemannian metric of nonpositive
curvature. 
\end{abstract}

\maketitle

\section{Introduction}

Throughout the paper unless stated otherwise
all manifolds are smooth and connected, 
all metrics are complete
Riemannian, and 
the phrase ``nonpositive sectional curvature'' 
is abbreviated to ``$K\le 0$''.
 
A manifold is {\it covered by $\mathbb R^n$}
if its universal cover is diffeomorphic to $\mathbb R^n$.
The Cartan-Hadamard theorem yields a basic obstruction
to $K\le 0$: any 
$n$-manifold with a metric of $K\le 0$ is covered by $\mathbb R^n$.

Recall that an open $K(G,1)$ manifold covered by 
$\mathbb R^n$ exists if and only if $G$ is a countable
group of finite cohomological dimension.
Countable groups of finite cohomological dimension
form a class of torsion-free groups
that is closed under subgroups,
extensions, and amalgamated products. 

A much deeper obstruction to $K\le 0$ arises 
from Gromov's random, torsion-free, hyperbolic groups with
strong fixed point properties:  
Each of the groups can be realized
as the fundamental group of open manifold covered by 
$\mathbb R^n$ for large enough $n$, yet no such manifold is
homotopy equivalent to a manifold of $K\le 0$, see~\cite{Gro-rand03}
and~\cite[Theorem 1.1]{NaoSil11}. 

Even though the focus of this paper is on open manifolds,
we next review known obstructions to $K\le 0$
for closed manifolds. There exist closed manifolds that are covered by $\mathbb R^n$ and 
have one of the following
properties: their fundamental groups are not 
CAT($0$), see~\cite{Mes90, Dav-book08, Sap11}, 
they are homeomorphic but not diffeomorphic
to manifolds of $K\le 0$~\cite{Ont-double03},
they admit locally CAT($0$) metrics but are
not homotopy equivalent to closed manifold of 
$K\le 0$~\cite{DJL12}.

A manifold is {\it covered by $\mathbb R\times\mathbb R^{n-1}$}
if it is diffeomorphic to the product of $\mathbb R$ and
a manifold covered by $\mathbb R^{n-1}$.
In the paper we describe a sizable class of groups
such that for each group $G$ in the class 
\begin{enumerate}
\item[(i)]
every $K(G,1)$ manifold of $K\le 0$ 
is covered by $\mathbb R\times\mathbb R^{n-1}$;
\vspace{5pt}
\item[(ii)]
there are 
$K(G,1)$ manifolds that are  covered by $\mathbb R^n$ but not
homeomorphic to manifolds covered by 
$\mathbb R\times\mathbb R^{n-1}$.
\end{enumerate}

This yields a homeomorphism obstruction to $K\le 0$ for 
if $G$ satisfies (i)-(ii), then no manifold in (ii) 
is homeomorphic
to a manifold of $K\le 0$.

Manifolds covered by 
$\mathbb R\times\mathbb R^{n-1}$ arise in the context
of nonpositive curvature, namely,
consider a manifold of $K\le 0$ written as $X/G$ where 
$G$ is the deck transformations
group acting isometrically on the universal cover $X$. 
If $G$ {\it fixes a Busemann function\,} (i.e. $G$  
fixes a point at infinity and stabilizes each horosphere
centered at the point), then  $X/G$ is  
covered by $\mathbb R\times\mathbb R^{n-1}$.
More precisely,
any Busemann function is a $C^2$-submersion~\cite{HeiImH77},
so it defines a $C^1$ diffeomorphism of
$X$ and $\mathbb R\times\mathbb R^{n-1}$, 
and if the Busemann function is $G$-invariant,
the $C^1$-diffeomorphism descends to quotients,
which are therefore $C^\infty$-diffeomorphic.

The property ``$G$ fixes a Busemann function'' can
be forced by purely algebraic assumptions on $G$ 
summarized in Sections~\ref{sec: center}-\ref{sec: grounded},
where we also describe various classes of groups
with this property some of which are listed below.

\begin{ex}
\label{ex-intro: busemann list}
Any of the following conditions implies that each
isometric, properly discontinuous $G$-action on 
a Hadamard manifold $X$ fixes a Busemann function:
\begin{itemize}
\item[(1)]
$G=\pi_1(E)$ where $E$ is 
an aspherical manifold that has a 
finitely-sheeted cover which is the total space of
an oriented circle bundle with nonzero rational
Euler class. This applies when $E$ is an 
closed infranilmanifold of nilpotency degree $\ge 2$, or 
a closed $3$-manifold modelled on 
$\widetilde{SL}_2(\mathbb R)$.
\item[(2)]
$G=\pi_1(E)\times H$  where $E$ is as in (1)
and $H$ is any group. 
\item[(3)]
$G=\mathbb Z^n\rtimes H$ where $n\ge 3$ and
$H$ is a finite index subgroup of 
$GL_n(\mathbb Z)$, and the semidirect product
is defined via the standard $H$-action on $\mathbb Z^n$.
\item[(4)] 
$G$ is any group with infinite center
and finite abelianization, e.g. the preimage
of any lattice under the universal cover
$\widetilde{Sp}_{2n}(\mathbb R)\to Sp_{2n}(\mathbb R)$, 
$n\ge 2$; actually this example is also covered by (1).
\item[(5)]
$G$ is the amalgamated product along $\mathbb Z^n$
of the group $\mathbb Z^n\rtimes H$ as in (4)
and any group $K$ that contains $\mathbb Z^n$ as a normal
subgroup. 
\item[(6)]
$G$ has infinitely generated center, e.g.
the product of $(\mathbb Q, +)$ and any
group.
\end{itemize}
\end{ex}

In Section~\ref{sec: visib} we show that if 
the Hadamard manifold $X$ is {\it visibility\,} 
(i.e. any two distinct points at infinity 
are the endpoints of a geodesic in $X$), then
the class of groups that fix Busemann functions on $X$
gets even larger, e.g. it contains any nontrivial
product $G=G_1\times G_2$ such that there is a 
$K(G, 1)$ manifold.

An essential tool in understanding  
manifolds covered by $\mathbb R\times\mathbb R^{n-1}$ 
is the recent result of
Guilbault~\cite{Gui-prod-R07}: if an open manifold $W$
of dimension $\ge 5$ is homotopy equivalent to a finite
complex, then $\mathbb R\times W$ is diffeomorphic
to the interior of a compact manifold.
As an addendum to~\cite{Gui-prod-R07} we show: 

\begin{thm} 
\label{thm-intro: reg nbhd}
Let $W$ be an open $(n-1)$-manifold with $n\ge 5$ 
that is homotopy equivalent
to a finite complex of dimension $k\le n-3$.
Then $\mathbb R\times W$ is diffeomorphic to
the interior of a regular neighborhood of a $k\!$-dimensional
finite subcomplex. 
\end{thm}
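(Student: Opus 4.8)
The plan is to deduce the statement from Guilbault's theorem together with the classical fact that a compact manifold of small geometric dimension is a regular neighborhood of an embedded spine. Since $W$ is an open manifold homotopy equivalent to a finite complex, \cite{Gui-prod-R07} shows that $\mathbb R\times W$ is diffeomorphic to the interior of a compact smooth $n$-manifold $M$ with boundary (this applies directly once $\dim W=n-1\ge 5$; the borderline case $n=5$ is discussed below). The manifold $M$ is homotopy equivalent to $\mathbb R\times W$, hence to a finite complex of dimension $k\le n-3$, so it suffices to prove: \emph{a compact smooth $n$-manifold $M$ with nonempty boundary that is homotopy equivalent to a finite $k$-complex with $k\le n-3$ and $n\ge 6$ is diffeomorphic to a regular neighborhood of a smoothly embedded finite $k$-complex $L$.} Granting this, $\mathbb R\times W=\operatorname{int}(M)$ is the interior of such a regular neighborhood, as required in Theorem~\ref{thm-intro: reg nbhd}.

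For the displayed assertion I would argue by handle theory. Fix a handle decomposition of $M$ viewed as a cobordism from the empty set to $\partial M$, and turn it upside down, so that $M$ is built from $\partial M\times[0,1]$ by attaching handles, an index-$\lambda$ handle of the original decomposition becoming an index-$(n-\lambda)$ handle. By Poincar\'e--Lefschetz duality (using the orientation local system if $M$ is nonorientable), $H_j(M,\partial M)\cong H^{n-j}(M)$, and since $M$ has the homotopy type of a $k$-complex this group vanishes whenever $n-j>k$, that is for $j\le n-k-1$; combined with the surjectivity of $\pi_1(\partial M)\to\pi_1(M)$, which again follows from the low geometric dimension, the pair $(M,\partial M)$ is $(n-k-1)$-connected, and $n-k-1\ge 2$. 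The standard handle-trading procedure of Smale and Wall --- valid because $n\ge 6$, so that the Whitney trick applies to the middle-dimensional handles produced along the way --- then lets us build $M$ from $\partial M\times[0,1]$ using only handles of index $\ge n-k$. Turning the decomposition back over, $M$ carries a handle decomposition all of whose handles have index $\le k$, and such a handlebody, after passing to a compatible smooth triangulation, simplicially collapses onto a $k$-dimensional subcomplex $L$; by regular neighborhood theory $M$ is then a regular neighborhood of the embedded $L$.

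I expect the handle-trading step to be the main obstacle: eliminating every handle of index exceeding $k$ uses the full strength of Poincar\'e--Lefschetz duality to control the handles near the top dimension, and it needs some care at the fundamental-group level (verifying that $\pi_1(\partial M)\to\pi_1(M)$ is onto, or else working over the relevant cover). A second, more essential difficulty is the borderline dimension $n=5$, which lies outside the hypotheses of Guilbault's theorem and at the edge of the range where handle trading is available, and therefore requires a separate argument; here one has $k\le 2$, so that the finite $2$-complex homotopy equivalent to $\mathbb R\times W$ can be embedded in $\mathbb R\times W$ by general position (as $5\ge 2\cdot 2+1$), and one identifies a regular neighborhood of such a spine with all of $\mathbb R\times W$ by an end-engulfing argument in the spirit of \cite{Gui-prod-R07}.
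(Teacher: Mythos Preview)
Your approach---first compactify $V=\mathbb R\times W$ via Guilbault to a compact $M$, then find a $k$-dimensional spine by handle trading---is natural and in fact equivalent in difficulty to the paper's route (embed the complex via Stallings, then show the complement of its regular neighborhood is an open collar). Both hinge on the same fact: the inclusion $\partial M\hookrightarrow M$, equivalently the canonical map $\pi_1(\infty_{_V})\to\pi_1(V)$, is a $\pi_1$-isomorphism. The paper proves this by a nontrivial argument combining two ingredients specific to $V=W\times\mathbb R$: the kernel is perfect (one-sided $h$-cobordism reasoning), and Guilbault's description of $\pi_1(\infty_{_V})$ as an amalgam $\pi_1(W)\ast_\Gamma\pi_1(W)$ forces the kernel to be free; a free perfect group is trivial.

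Your argument skips exactly this step. Poincar\'e--Lefschetz duality with $\mathbb Z[\pi]$-coefficients gives $H_j(M,\partial M;\mathbb Z[\pi])=0$ for $j\le n-k-1$, and from this one extracts surjectivity of $\pi_1(\partial M)\to\pi_1(M)$ and $H_1(\partial\widetilde M)=0$. But that only says the kernel is \emph{perfect}, not trivial, so you cannot conclude that $(M,\partial M)$ is $(n-k-1)$-connected in the homotopical sense. Wall's handle trading for eliminating index-$2$ handles (equivalently, producing a decomposition of $M$ on $\partial M$ with no handles of index $\le n-k-1$) genuinely requires $\pi_1(\partial M)\to\pi_1(M)$ to be an isomorphism: the attaching circles of the $2$-handles represent elements of the kernel, and if that kernel is a nontrivial perfect group there is no way to cancel or trade them. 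So the sentence ``combined with the surjectivity of $\pi_1(\partial M)\to\pi_1(M)$ \dots\ the pair $(M,\partial M)$ is $(n-k-1)$-connected'' is the gap; filling it requires precisely the Bass--Serre/amalgam argument the paper supplies. Your treatment of $n=5$ is also only a sketch; the paper avoids a separate case here because its collar argument (Stallings embedding plus Siebenmann's open collar theorem) already works once $\dim V\ge 5$.
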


In addition to fixing a Busemann function 
there are other properties that 
can force any open $K(G,1)$ manifold of $K\le 0$
to be the interior of a regular neighborhood of
a finite subcomplex. 

\begin{ex}
\label{intro-ex: flat} 
If $G$ be the fundamental group of a closed flat manifold, 
then for any $K(G,1)$ manifold with $K\le 0$,
the $G$-action on the universal cover $X$
either fixes a Busemann function or
stabilizes a flat $F$ where it acts cocompactly; 
in the latter case
$X/G$ is diffeomorphic to the normal bundle of the closed
flat submanifold $F/G$, see Lemma~\ref{lem: center has parabolic}
and~\cite[Corollary II.7.2]{BriHae}.
\end{ex}

\begin{ex}
\label{intro-ex: higher rank} 
Let $G$ be an irreducible torsion-free uniform lattice in a 
connected higher rank semisimple Lie group with finite center and no compact factor. 
Then harmonic maps superrigidity~\cite[Theorem 1.2]{Duc} 
implies that for any $K(G,1)$ manifold with $K\le 0$,
the $G$-action on the universal cover $X$
either acts cocompactly on a totally geodesic
subspace $F$, or fixes a point at infinity in which case 
it also fixes the corresponding Busemann function as 
$G^\ab$ is finite, see Remark~\ref{rmk: no hom into R}. 
\end{ex}

As we show later, Theorem~\ref{thm-intro: reg nbhd} and
Example~\ref{intro-ex: flat} yield 
an attractive characterization of $\mathbb R^n$:

\begin{cor} 
\label{cor-intro: Rn}
An open contractible $n\!$-manifold $W$ is homeomorphic
to $\mathbb R^n$ if and only if $W\times S^1$
admits a metric of $K\le 0$. 
\end{cor}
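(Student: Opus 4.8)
The plan is to establish the two implications separately; the forward one is routine and the converse uses the results above.

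\emph{Forward direction.} If $W$ is homeomorphic to $\mathbb R^n$, then $W\times S^1$ is in fact diffeomorphic to $\mathbb R^n\times S^1$: for $n\le 3$ the classification of low-dimensional manifolds already gives $W$ diffeomorphic to $\mathbb R^n$, while for $n\ge 4$ the manifold $\mathbb R^n\times S^1$ --- being homotopy equivalent to $S^1$, with $\mathrm{TOP}/O$ $2$-connected --- has a unique smooth structure by smoothing theory in dimension $n+1\ge 5$. Since $\mathbb R^n\times S^1=\mathbb R^{n+1}/\mathbb Z$ carries a complete flat metric, this proves one direction, and I turn to the converse.

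\emph{From $K\le 0$ to a bundle over $S^1$.} Assume $W\times S^1$ carries a metric of $K\le 0$, and write it as $X/G$ with $G=\pi_1(W\times S^1)\cong\mathbb Z$ acting freely, properly discontinuously and isometrically on the Hadamard manifold $X$. As $W$ is contractible, $W\times S^1$ is a $K(\mathbb Z,1)$, and $\mathbb Z$ is the fundamental group of the closed flat manifold $S^1$, so Example~\ref{intro-ex: flat} applies. In its alternative (b), $G$ stabilizes a flat $F\subseteq X$ on which it acts cocompactly and $X/G$ is diffeomorphic to the normal bundle of $F/G$; since $F/G$ is a closed flat manifold with fundamental group $\mathbb Z$ it equals $S^1$ by Bieberbach, so $W\times S^1$ is the total space of an $n$-plane bundle over $S^1$. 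In alternative (a), $G$ fixes a Busemann function $b$ on $X$, and as recalled in the introduction the unit gradient flow of $b$ is a $G$-equivariant trivialization $X\cong\mathbb R\times b^{-1}(0)$ with $G$ acting trivially on the $\mathbb R$-factor; this descends to a diffeomorphism $W\times S^1\cong\mathbb R\times N$, where $N=b^{-1}(0)/G$ is an open $n$-manifold with contractible universal cover $b^{-1}(0)$ and $\pi_1(N)=\mathbb Z$, hence homotopy equivalent to $S^1$. Assume now $n\ge 4$; then $\mathbb R\times N$ has dimension $n+1\ge 5$ and $1\le (n+1)-3$, so Theorem~\ref{thm-intro: reg nbhd} applied to $N$ identifies $W\times S^1\cong\mathbb R\times N$ with the interior of a regular neighborhood of a finite $1$-complex $\simeq S^1$; collapsing a spanning tree makes this the interior of a tubular neighborhood of an embedded circle, again the total space of an $n$-plane bundle over $S^1$.

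\emph{Cancelling the $S^1$.} So for $n\ge 4$ the manifold $W\times S^1$ is diffeomorphic to the total space $E$ of an $n$-plane bundle over $S^1$. Of the two such bundles, the trivial one has total space $\mathbb R^n\times S^1$ and the nontrivial one has total space whose connected double cover is $\mathbb R^n\times S^1$; since the connected double cover of $W\times S^1$ is again $W\times S^1$ ($W$ being its own universal cover and $S^1$ double-covering $S^1$), passing to double covers yields $W\times S^1\cong\mathbb R^n\times S^1$ in either case. Now cancel the $S^1$ using fundamental pro-groups at infinity. A contractible open manifold of dimension $\ge 2$ is one-ended, and $\{(W\setminus K)\times S^1\}$, over compacta $K\subseteq W$, is a cofinal system of neighborhoods of the end of $W\times S^1$, so $\pi_1^{\infty}(W\times S^1)\cong\pi_1^{\infty}(W)\times\underline{\mathbb Z}$; likewise $\{(\mathbb R^n\setminus\overline B_R)\times S^1\}$ is cofinal for $\mathbb R^n\times S^1$ with simply connected $S^{n-1}$-factors ($n\ge 3$) and isomorphic bonding maps, so $\pi_1^{\infty}(\mathbb R^n\times S^1)\cong\underline{\mathbb Z}$. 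Hence $\pi_1^{\infty}(W)\times\underline{\mathbb Z}\cong\underline{\mathbb Z}$; being a retract of $\underline{\mathbb Z}$, the pro-group $\pi_1^{\infty}(W)$ is pro-abelian, and in the abelian category of pro-abelian groups a relation $A\oplus\underline{\mathbb Z}\cong\underline{\mathbb Z}$ forces $A=0$, since a split monomorphism $\underline{\mathbb Z}\to\underline{\mathbb Z}$ is multiplication by $\pm1$ and has zero cokernel. Thus $W$ is simply connected at infinity, and a contractible open $n$-manifold that is simply connected at infinity is homeomorphic to $\mathbb R^n$ --- by Stallings for $n\ge 5$ and Freedman for $n=4$.

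\emph{Low dimensions and the main obstacle.} For $n\le 2$ every contractible open $n$-manifold is $\mathbb R^n$, and for $n=3$ one runs the pro-group argument inside alternative (b) --- which needs no dimension restriction --- and invokes the theorem, ultimately resting on Perelman's geometrization, that a contractible open $3$-manifold simply connected at infinity is homeomorphic to $\mathbb R^3$. The reduction to a bundle over $S^1$ via Example~\ref{intro-ex: flat} and Theorem~\ref{thm-intro: reg nbhd} is essentially formal; the heart of the matter is the cancellation $W\times S^1\cong\mathbb R^n\times S^1\Rightarrow W\cong\mathbb R^n$, which goes through the fundamental pro-group at infinity and then the appropriate characterization of Euclidean space. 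The point I expect to require the most care is dimension $3$ in alternative (a), where Theorem~\ref{thm-intro: reg nbhd} does not apply to $\mathbb R\times N$ and a separate low-dimensional argument is needed.
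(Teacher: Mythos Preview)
Your argument is essentially correct for $n\ge 4$, and it uses the same underlying ingredients as the paper, but the organization differs and this is what leaves the $n=3$ gap you flag at the end.

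The paper's proof avoids the detour through ``$W\times S^1$ is a vector bundle over $S^1$'' in the parabolic case. Instead, once $W\times S^1\cong\mathbb R\times N$ (parabolic) or $W\times S^1\cong\mathbb R^n\times S^1=\mathbb R\times(\mathbb R^{n-1}\times S^1)$ (axial), it invokes Corollary~\ref{cor: product aspherical Rn}: if $B\times C$ is homeomorphic to $\mathbb R$ times an open manifold with $B$ closed and $C$ open aspherical, then the universal cover of $C$ is homeomorphic to $\mathbb R^l$. With $B=S^1$ and $C=W$ this gives $W\cong\mathbb R^n$ in one stroke. The pro-group cancellation you carry out by hand is already packaged inside the proof of that corollary (which shows $\pi_1(\infty_{_C})$ is trivial), and crucially the low-dimensional case $\dim C=3$ is covered there via Remark~\ref{rmk: m=3 case collar}, since $\pi_1=\mathbb Z$ is one of the groups for which the $4$-dimensional open collar theorem applies. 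So the paper's route closes the $n=3$ parabolic case that your route leaves open; your use of Theorem~\ref{thm-intro: reg nbhd} to first force a regular neighborhood is exactly what imposes the unwanted $n\ge 4$ restriction.

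Two minor simplifications worth noting. First, in the axial case the bundle over $S^1$ is immediately trivial because $W\times S^1$ is orientable (as $W$ is contractible); the double-cover trick is unnecessary. Second, your pro-group cancellation $\pi_1^\infty(W)\times\underline{\mathbb Z}\cong\underline{\mathbb Z}\Rightarrow\pi_1^\infty(W)=0$ is cleaner if argued via stability: the right side is stable, hence so is the product, hence so is $\pi_1^\infty(W)$; a stable pro-group is its inverse limit, and $G\times\mathbb Z\cong\mathbb Z$ for ordinary groups forces $G=1$. Your phrasing through ``pro-abelian retracts'' is correct in outcome but a bit roundabout.
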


If $n=4$, then ``homeomorphic'' in 
Corollary~\ref{cor-intro: Rn} cannot be replaced with
``diffeomorphic''
because if $W$ is an exotic $\mathbb R^4$, then
$W\times S^1$ is diffeomorphic to 
$\mathbb R^4\times S^1$~\cite[Theorem 2.2]{Sie-collar}.

Since there are only countably many diffeomorphism classes of 
compact manifolds, there are only countably many possibilities
for open regular neighborhoods.
Various classification results for manifolds diffeomorphic
to the interiors of regular neighborhoods 
can be found in~\cite{Hae-emb, RouSan-blk-bunI, Sie-collar, LicSie}
and~\cite[Corollary 11.3.4]{Wal-book}. 
In the stable range, i.e. when the subcomplex has dimension 
$<\frac{n}{2}$, 
the interior of a regular neighborhood of dimension $n$
is determined up to diffeomorphism by its tangential
homotopy type~\cite{LicSie}. 

\begin{cor} Let $G$ be a group as in
\textup{Examples~\ref{ex-intro: busemann list},
\ref{intro-ex: flat}, \ref{intro-ex: higher rank}}.
If $V$ is an open $n$-manifold that
is homotopy equivalent to a finite $K(G,1)$ complex
of dimension $<\frac{n}{2}$, where $n\ge 5$, then 
the tangential homotopy type of $V$ contains 
at most one manifold of $K\le 0$. 
\end{cor}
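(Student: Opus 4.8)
The plan is to reduce, in every case, to the assertion that any manifold of $K\le 0$ in the tangential homotopy type of $V$ is diffeomorphic to the interior of an $n$-dimensional regular neighborhood of a finite complex of dimension $<\frac n2$, and then to quote~\cite{LicSie}: in that stable range the interior of a regular neighborhood is determined up to diffeomorphism by its tangential homotopy type. Uniqueness is then immediate, since any two manifolds of $K\le 0$ in the tangential homotopy type of $V$ would be regular-neighborhood interiors sharing the tangential homotopy type of $V$.

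Concretely, I would fix a manifold $M$ of $K\le 0$ in the tangential homotopy type of $V$ and write $M=X/G$, with $X$ a Hadamard manifold and $G\cong\pi_1(V)$ acting freely, properly discontinuously and isometrically as the deck group. Since $X$ is contractible, $M$ is an open $K(G,1)$ manifold of dimension $n$, and $\cd(G)\le d$, where $d<\frac n2$ is the dimension of the given finite $K(G,1)$ complex. Now invoke the relevant hypothesis: for $G$ as in Example~\ref{ex-intro: busemann list} the $G$-action fixes a Busemann function, while for $G$ as in Example~\ref{intro-ex: flat} or Example~\ref{intro-ex: higher rank} it either fixes a Busemann function or acts cocompactly on a flat, respectively totally geodesic, subspace $F\subseteq X$. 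I would treat these two alternatives separately.

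In the Busemann case, a $G$-invariant Busemann function is a $C^2$-submersion that descends to a diffeomorphism $M\cong\mathbb R\times W$, with $W$ an open $(n-1)$-manifold covered by $\mathbb R^{n-1}$, as recalled in the introduction; here $W$ is genuinely open, for otherwise $G=\pi_1(W)$ would be a Poincar\'e duality group of dimension $n-1>\cd(G)$. Since $W\simeq M\simeq V$ is homotopy equivalent to the given finite complex of dimension $d$, and $d\le n-3$ (because $d<\frac n2$ and $n\ge5$), Theorem~\ref{thm-intro: reg nbhd} shows that $M$ is diffeomorphic to the interior of a regular neighborhood of a finite subcomplex of dimension $d<\frac n2$. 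In the cocompact case, $F$ is a complete totally geodesic submanifold of a Hadamard manifold, hence itself Hadamard and contractible, so $F/G$ is a closed aspherical manifold, i.e.\ a finite $K(G,1)$ complex of dimension $\cd(G)\le d<\frac n2$; and since $F$ has no focal points in $X$ (the curvature is $\le0$ and the second fundamental form vanishes), the normal exponential map is a $G$-equivariant diffeomorphism of the normal bundle of $F$ onto $X$, so that $M=X/G$ is diffeomorphic to the total space of the normal bundle of the closed submanifold $F/G\subseteq M$, that is, to the interior of a tubular, hence regular, neighborhood of $F/G$ (cf.\ Example~\ref{intro-ex: flat} and~\cite[Corollary II.7.2]{BriHae}).

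In either alternative $M$ is diffeomorphic to the interior of an $n$-dimensional regular neighborhood of a finite complex of dimension $<\frac n2$ while still carrying the tangential homotopy type of $V$, so two such manifolds of $K\le 0$ are diffeomorphic by~\cite{LicSie}, which is exactly the claim. I expect the points demanding the most care to be the dimensional bookkeeping (checking $d\le n-3$, so that Theorem~\ref{thm-intro: reg nbhd} applies, and $\dim F/G<\frac n2$, so that~\cite{LicSie} applies) together with the identification, in the cocompact alternative, of $X/G$ with the interior of a regular neighborhood of $F/G$; everything else is assembly of results already established.
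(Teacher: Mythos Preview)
Your proposal is correct and matches the paper's intended argument: the corollary is stated without proof, but the surrounding discussion makes clear that one reduces (via Theorem~\ref{thm-intro: reg nbhd} in the Busemann case, and via the normal exponential map in the cocompact case of Examples~\ref{intro-ex: flat}--\ref{intro-ex: higher rank}) to the assertion that any manifold of $K\le 0$ in the tangential homotopy type of $V$ is the interior of a regular neighborhood of a subcomplex of dimension $<\frac{n}{2}$, and then invokes~\cite{LicSie}. Your dimension bookkeeping and the verification that $W$ is open are exactly the details one needs to supply.
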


If an $n$-dimensional regular neighborhood $R$ 
is homotopy equivalent to a closed manifold $B$,
then $\mathrm{Int}(R)$ is the total space of a vector bundle
over $B$, provided $\dim(B)<\frac{2n-2}{3}$ 
and $n>4$~\cite{Hae-emb, Sie-collar}; the former inequality
is known as the metastable range.
Together with Theorem~\ref{thm-intro: reg nbhd} this
implies:

\begin{cor}
Let $B$ be a closed aspherical $k$-manifold as in 
\textup{Example~\ref{ex-intro: busemann list}(1)} or
\textup{Examples~\ref{intro-ex: flat}-\ref{intro-ex: higher rank}}, and let $V$ be an open $n$-manifold
of $K\le 0$ that is homotopy equivalent to $B$. If $2n>3k+2$ and $n>4$,
then $V$ is diffeomorphic to the total space of a vector bundle
over $B$.
\end{cor}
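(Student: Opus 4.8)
Let $G=\pi_1(B)$. Since $V$ is homotopy equivalent to the closed aspherical $k$-manifold $B$, it is a $K(G,1)$ with $\mathrm{cd}(G)=k$; writing $V=X/G$ with $X$ the universal cover, the deck group $G$ acts on the Hadamard manifold $X$ by isometries, properly discontinuously and freely. The plan is to split according to the alternatives recorded in Example~\ref{ex-intro: busemann list}(1) and Examples~\ref{intro-ex: flat}--\ref{intro-ex: higher rank}: either this $G$-action fixes a Busemann function, or---possible only when $B$ is flat or higher-rank locally symmetric---$G$ stabilizes a complete totally geodesic submanifold $F\subset X$ on which it acts cocompactly. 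When $B$ is as in Example~\ref{ex-intro: busemann list}(1) only the first alternative occurs.

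Suppose $G$ fixes a Busemann function. As recalled in the introduction, $V$ is then diffeomorphic to $\mathbb R\times W$ for an open $(n-1)$-manifold $W$ covered by $\mathbb R^{n-1}$; since $\mathbb R$ is contractible, $W\simeq V\simeq B$, so $W$ is homotopy equivalent to a finite complex of dimension $k$. From $2n>3k+2$ and $n>4$ one gets $k<\tfrac{2n-2}{3}<n-2$, hence $k\le n-3$, while $n\ge 5$; thus Theorem~\ref{thm-intro: reg nbhd} yields a diffeomorphism $V\cong\mathbb R\times W\cong\mathrm{Int}(R)$, where $R$ is a regular neighborhood of a $k$-dimensional finite subcomplex. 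A regular neighborhood is homotopy equivalent to its interior, so $R\simeq V\simeq B$; thus $R$ is an $n$-dimensional regular neighborhood homotopy equivalent to the closed manifold $B$ with $\dim B=k<\tfrac{2n-2}{3}$ and $n>4$, and the metastable-range regular-neighborhood theorem of Haefliger and Siebenmann quoted above shows that $\mathrm{Int}(R)=V$ is the total space of a vector bundle over $B$.

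In the remaining case $B$ is either a closed flat manifold or a closed higher-rank locally symmetric space. If $B$ is flat, Example~\ref{intro-ex: flat} already asserts that $V$ is diffeomorphic to the normal bundle of a closed flat submanifold $F/G\subset V$; as $F/G$ is a closed flat manifold with $\pi_1(F/G)\cong G\cong\pi_1(B)$, Bieberbach's rigidity theorem gives a diffeomorphism $F/G\cong B$, and transporting the normal bundle along it exhibits $V$ as the total space of a vector bundle over $B$. If $B$ is higher-rank locally symmetric, Example~\ref{intro-ex: higher rank}, via harmonic-map superrigidity, gives that $G$ acts cocompactly on a complete totally geodesic subspace $F\subset X$, which superrigidity forces to be symmetric; the normal exponential map of the convex submanifold $F$ identifies $X$, $G$-equivariantly, with the total space of the normal bundle of $F$ (a standard fact for totally geodesic submanifolds in nonpositive curvature), so $V\cong\nu(F/G)$ over the closed locally symmetric manifold $F/G$, which has dimension $\mathrm{cd}(G)=k$ and fundamental group $G$; Mostow rigidity then identifies $F/G$ diffeomorphically with $B$, and again $V$ is a vector bundle over $B$.

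The crux is this second case: one must upgrade the a priori only homotopy-theoretic relation between $B$ and the base $F/G$ of the normal bundle to an actual diffeomorphism, and this is precisely where the rigidity input---Bieberbach in the flat case, Mostow together with Duchesne's superrigidity (needed to know $F$ is symmetric) in the higher-rank case---is indispensable. The Busemann case, by contrast, is a formal splice of Theorem~\ref{thm-intro: reg nbhd} with the Haefliger--Siebenmann regular-neighborhood theorem, the numerical hypotheses $2n>3k+2$ and $n>4$ having been arranged exactly so that the codimension condition $k\le n-3$ and the metastable condition $k<\tfrac{2n-2}{3}$ both hold.
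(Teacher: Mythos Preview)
Your Busemann-function case is exactly the paper's intended argument: Theorem~\ref{thm-intro: reg nbhd} produces a regular neighborhood of a $k$-complex, and the Haefliger--Siebenmann metastable theorem turns it into a vector bundle over $B$.

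In the cocompact case you diverge. You obtain a vector bundle over $F/G$ and then invoke rigidity (Bieberbach, respectively Mostow plus the claim that superrigidity forces $F$ to be symmetric) to identify $F/G$ with $B$. The paper's route is more uniform and avoids rigidity altogether: the disk bundle over $F/G$ is itself a regular neighborhood of the closed aspherical manifold $F/G$, which has dimension $\mathrm{cd}(G)=k$ because $\pi_1(F/G)\cong G$. Since this regular neighborhood is homotopy equivalent to $B$ and $k<\tfrac{2n-2}{3}$, the same Haefliger--Siebenmann result already yields a vector bundle over $B$, without ever asking whether $F/G\cong B$.

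Your route is essentially correct, but the higher-rank step leans on more than the paper actually records. Example~\ref{intro-ex: higher rank} only extracts from Duchesne that $G$ acts cocompactly on a totally geodesic $F\subset X$; you need the stronger output of harmonic-map superrigidity that the equivariant totally geodesic map $X_0\to X$ is an isometric immersion (up to scaling), so that $F$ is symmetric and $F/G\cong B$. That is true, but it is an extra input, and it is unnecessary: applying Haefliger--Siebenmann to the disk bundle handles both the flat and higher-rank cocompact cases at once, buying a shorter proof with fewer external dependencies.
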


We now turn to methods of producing manifolds as
in (ii). A trivial method
is to consider a $K(G,1)$ manifold
that has the minimal dimension among all $K(G,1)$ manifolds 
that are covered by $\mathbb R^n$, which yields:

\begin{prop}
Any aspherical manifold is homotopy equivalent
to a manifold covered by $\mathbb R^n$ but
not covered by $\mathbb R\times\mathbb R^{n-1}$.
\end{prop}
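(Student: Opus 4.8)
The plan is to carry out the ``minimal dimension'' construction indicated just above the statement. Write $G=\pi_1(M)$, where $M$ is the given aspherical manifold. First I would record that $G$ is a countable group of finite cohomological dimension: countability holds because manifolds are second countable, and $\cd(G)\le\dim M$ since $M$ itself is a finite-dimensional $K(G,1)$. By the characterization recalled in the introduction there is then an open $K(G,1)$ manifold covered by $\mathbb R^n$ for some $n$, and it is homotopy equivalent to $M$ because both are $K(G,1)$'s. Hence there exists an integer $n$ for which some manifold homotopy equivalent to $M$ is covered by $\mathbb R^n$; let $n_0$ be the least such integer and fix a manifold $W$ of dimension $n_0$ that is homotopy equivalent to $M$ and covered by $\mathbb R^{n_0}$.

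I would then claim that $W$ is the required example, the only thing to check being that $W$ is not covered by $\mathbb R\times\mathbb R^{n_0-1}$. Suppose it were. By the definition of that phrase, $W$ is diffeomorphic to $\mathbb R\times W'$ for some manifold $W'$ covered by $\mathbb R^{n_0-1}$. Since $\mathbb R$ is contractible, the projection $\mathbb R\times W'\to W'$ is a homotopy equivalence, so $W'\simeq W\simeq M$ while $\dim W'=n_0-1$. Thus some manifold homotopy equivalent to $M$, namely $W'$, is covered by $\mathbb R^{n_0-1}$, contradicting the minimality of $n_0$. Therefore $W$ is homotopy equivalent to $M$, is covered by $\mathbb R^{n_0}$, and is not covered by $\mathbb R\times\mathbb R^{n_0-1}$, which is the assertion with $n=n_0$.

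Every step above is bookkeeping, so I do not anticipate a genuine obstacle; the only points that merit a line of care are verifying that the cited existence result applies --- that is precisely the finiteness of $\cd(G)$ together with the countability of $G$ --- and that the least valid dimension exists, which is automatic since the valid dimensions form a nonempty set of nonnegative integers. It is also worth noting in passing that this set is upward closed, because $W\times\mathbb R$ is covered by $\mathbb R^{n+1}$ whenever $W$ is covered by $\mathbb R^{n}$, so $n_0$ is a sharp threshold; this observation is not needed for the proof.
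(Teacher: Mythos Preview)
Your argument is correct and is precisely the ``trivial method'' the paper alludes to immediately before the statement: take a $K(G,1)$ manifold covered by $\mathbb R^{n_0}$ with $n_0$ minimal, and observe that a splitting $W\cong\mathbb R\times W'$ with $W'$ covered by $\mathbb R^{n_0-1}$ would contradict minimality. The paper does not spell out a proof beyond that sentence, so there is nothing further to compare.
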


\begin{cor} If $G$ is as in\, 
\textup{Example~\ref{ex-intro: busemann list}},
then any $K(G,1)$ manifold is homotopy equivalent to a manifold 
that admits no metric of $K\le 0$
and is covered by a Euclidean space.
\end{cor}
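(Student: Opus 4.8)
The plan is to obtain the desired manifold directly from the preceding Proposition together with the Busemann-fixing property recorded in Example~\ref{ex-intro: busemann list}. Concretely, given any $K(G,1)$ manifold $M$, the Proposition above supplies a manifold $N$ homotopy equivalent to $M$ that is covered by some Euclidean space $\mathbb{R}^n$ but is not covered by $\mathbb{R}\times\mathbb{R}^{n-1}$. Since $N$ is homotopy equivalent to $M$, it is aspherical with $\pi_1(N)\cong G$, so $N$ is itself a $K(G,1)$ manifold covered by a Euclidean space, and it only remains to see that $N$ carries no metric of $K\le 0$.

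For that last point I would argue by contradiction. Suppose $N$ admitted a metric of $K\le 0$, and write $N=X/G$ where $X$ is the universal cover, a Hadamard manifold, and $G$ acts on $X$ by deck transformations, hence isometrically and properly discontinuously. By Example~\ref{ex-intro: busemann list}, every such action fixes a Busemann function, i.e. $G$ fixes a point at infinity of $X$ and stabilizes each horosphere centered there. As explained in the Introduction (a Busemann function is a $C^2$-submersion by~\cite{HeiImH77}, and a $G$-invariant one descends to a $C^1$-diffeomorphism of $N$ with $\mathbb{R}$ times a horosphere quotient), this forces $N$ to be covered by $\mathbb{R}\times\mathbb{R}^{n-1}$, contradicting the choice of $N$. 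Hence $N$ is a $K(G,1)$ manifold, covered by $\mathbb{R}^n$, admitting no metric of $K\le 0$, and homotopy equivalent to the given $M$, which is exactly the assertion.

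There is essentially no hard step here: the content lies entirely in the earlier Proposition and in the algebraic analysis underlying Example~\ref{ex-intro: busemann list}. The only points needing a line of care are that the manifold produced by the Proposition is genuinely a $K(G,1)$ (immediate from homotopy invariance of asphericity and of $\pi_1$) and that the deck-transformation action on the universal cover meets the ``properly discontinuous, isometric'' hypothesis required to invoke Example~\ref{ex-intro: busemann list}, which it does by construction. Thus the corollary is a formal consequence of the results already in place.
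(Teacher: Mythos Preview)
Your proof is correct and follows exactly the argument the paper intends: combine the preceding Proposition (producing $N$ covered by $\mathbb R^n$ but not by $\mathbb R\times\mathbb R^{n-1}$) with the Busemann-fixing property of Example~\ref{ex-intro: busemann list}, which via the discussion in the Introduction forces any $K\le 0$ metric on $N$ to make it covered by $\mathbb R\times\mathbb R^{n-1}$. The paper leaves the corollary unproved precisely because this deduction is immediate from the (i)--(ii) framework set up earlier.
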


This trivial method always works, yet it is non-constructive
for it is not easy to decide whether a specific
open manifold has the minimal dimension in the above sense,
see~\cite{BKK02, BesFei02, Yoo04, Des06}. 

On the other hand, if $G$ has a finite $K(G,1)$ complex, 
we produce explicit manifolds satisfying (ii):

\begin{thm}
\label{thm-intro: count many} 
Let $V$ be an open aspherical $n$-manifold that is
homotopy equivalent to a finite complex
of dimension $\le n-3$, where $n\ge 5$. 
Then the tangential homotopy type of
$V$ contains countably many manifolds 
such that each of them is covered by $\mathbb R^n$ and
is not homeomorphic to 
the interior of a regular neighborhood
of a finite subcomplex of codimension $\ge 3$, 
and in particular not covered by
$\mathbb R\times\mathbb R^{n-1}$.
\end{thm}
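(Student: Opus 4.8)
The plan is to build the manifolds by grafting increasingly complicated ``ends'' onto a fixed regular-neighbourhood model, while keeping the universal cover equal to $\mathbb R^n$ by arranging that the fundamental group is carried entirely by the end.

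\emph{Step 1: a base model.} Write $V\simeq L$ with $L$ a finite complex of dimension $k\le n-3$, and put $G=\pi_1(V)$. Using the theory of thickenings one produces a compact $n$-manifold $\bar V_0$ collapsing to an embedded copy of $L$; since $V$ is itself an $n$-manifold homotopy equivalent to $L$, its stable tangent bundle restricts to a bundle over $L$, and choosing the thickening compatibly with this data puts $V_0:=\mathrm{Int}(\bar V_0)$ in the tangential homotopy type of $V$ (in codimension $\ge3$ the relevant thickening data are flexible enough for this). Because $\dim L\le n-3$, the inclusion $\partial\bar V_0\hookrightarrow\bar V_0$ is a $\pi_1$-isomorphism; passing to universal covers, every neighbourhood of infinity in $\widetilde{V_0}$ is simply connected (again a codimension $\ge3$ phenomenon), so $\widetilde{V_0}$ is a contractible open $n$-manifold that is simply connected at infinity, hence diffeomorphic to $\mathbb R^n$ by Stallings' characterisation of $\mathbb R^n$ (valid for $n\ge5$). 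Thus $V_0$ is covered by $\mathbb R^n$, lies in the tangential homotopy type of $V$, and is the interior of a regular neighbourhood of a codimension $\ge3$ subcomplex.

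\emph{Step 2: an infinite family with exotic ends.} Let $P=\partial\bar V_0$, so $\pi_1(P)=G$. I would modify the product end $P\times[0,\infty)$ of $V_0$ by splicing in an infinite telescope of compact cobordisms on $P$, each inducing the identity on $\pi_1$, arranged so that the resulting open $n$-manifold $W$ is still homotopy equivalent to $V_0$ (the telescope deformation retracts back onto $P$), but so that the end of $W$ is a tame, non-collarable end realising a prescribed end invariant of Siebenmann type (e.g. a class in $\widetilde K_0(\mathbb Z G)$, refined as needed by the proper homotopy type of the end; equivalently, realise a self-map of $P$ whose inverse mapping telescope is finitely dominated with that obstruction). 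The crucial points are: (i) the modification is supported near infinity and carries trivial tangential data, so $W$ stays in the tangential homotopy type of $V$; (ii) the modified end remains $\pi_1$-stable with stable group $G$, so the corresponding neighbourhood of infinity in $\widetilde W$ is simply connected, whence $\widetilde W$ is again contractible and simply connected at infinity and Stallings gives $\widetilde W\cong\mathbb R^n$; (iii) distinct invariants give pairwise non-homeomorphic $W$, and one arranges countably many distinct such invariants to be realised. Each resulting $W$ is then covered by $\mathbb R^n$ and lies in the tangential homotopy type of $V$.

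\emph{Step 3: conclusion.} If such a $W$ were homeomorphic to the interior of a regular neighbourhood $R$ of a finite subcomplex of codimension $\ge3$, then $W$ would be the interior of the \emph{compact} manifold $R$, so its end would be collarable, contradicting the construction; hence $W$ is not such an interior. Moreover, if $W$ were covered by $\mathbb R\times\mathbb R^{n-1}$, then by definition $W=\mathbb R\times W'$ with $W'$ an open $(n-1)$-manifold covered by $\mathbb R^{n-1}$ and $W'\simeq W\simeq V\simeq L$ of dimension $k\le n-3$; applying Theorem~\ref{thm-intro: reg nbhd} to $W'$ would exhibit $W=\mathbb R\times W'$ as the interior of a regular neighbourhood of a $k$-dimensional subcomplex, hence of codimension $n-k\ge3$ — contradicting the previous sentence. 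This gives the asserted countable family.

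The main obstacle is Step 2: one must perform the end surgery so that simultaneously the (tangential) homotopy type is preserved, the end retains fundamental group $G$ (so that the universal cover stays $\mathbb R^n$), and yet the end fails to be collarable in infinitely many detectable ways. Balancing these — especially realising the end invariants by manifolds inside a \emph{fixed} aspherical homotopy type — is where the real content lies; the verification in (ii) that the cover remains $\mathbb R^n$, which reduces to Stallings' theorem once one knows the end is $\pi_1$-stable over $G$, is the structural fact that makes the whole construction run.
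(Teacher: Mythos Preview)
Your Step~2 has a genuine gap that cannot be repaired within your framework. You want to keep the end $\pi_1$-stable with stable group $G$ and distinguish the resulting manifolds via Siebenmann's end obstruction in $\widetilde K_0(\mathbb Z G)$. But the theorem places no hypothesis on $G=\pi_1(V)$ beyond its having a finite $K(G,1)$, and for a vast class of such groups one has $\widetilde K_0(\mathbb Z G)=0$: free and free abelian groups, surface groups, torsion-free hyperbolic groups, CAT(0) groups, lattices --- essentially every group for which the Farrell--Jones conjecture is known. For any such $G$ your construction produces nothing: every tame end with stable $\pi_1\cong G$ is collarable, so every $W$ you build is the interior of a compact manifold with $\pi_1$-isomorphic boundary, hence (in codimension $\ge 3$) a regular neighbourhood. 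Even when $\widetilde K_0(\mathbb Z G)\neq 0$ it may well be finite, killing the ``countably many'' conclusion; and you have not indicated how to \emph{realise} a prescribed class by a manifold in the given tangential homotopy type.

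The paper's construction is quite different and avoids $K$-theory entirely. Rather than keeping $\pi_1(\infty)\cong G$, it deliberately \emph{enlarges} the fundamental group at infinity: one attaches $C_i\times S^1$ to the regular neighbourhood $N$ along a tubular neighbourhood of a homotopically nontrivial circle in $\partial N$, where $C_i$ is a boundary connected sum of $i$ copies of a compact contractible $(n-1)$-manifold bounded by a non-simply-connected homology sphere $\Sigma$. The resulting $V_i=\mathrm{Int}(N_{c,C_i})$ is still the interior of a compact manifold, but now $\pi_1(\partial N_{c,C_i})\to\pi_1(N_{c,C_i})$ has nontrivial kernel (loops in $\Sigma_i=\partial C_i$ die in $C_i$), so by Theorem~\ref{thm-intro: reg nbhd} the $V_i$ cannot be the interior of a codimension~$\ge 3$ regular neighbourhood. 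Grushko's theorem on the rank of $\pi_1(\Sigma_i)$ separates the $V_i$; that the universal cover remains $\mathbb R^n$ is checked by an explicit Cantrell--Stallings compactification argument, since the circle unwraps to a line in $\widetilde N$. The obstruction is thus purely group-theoretic --- the failure of $\pi_1(\infty)\to\pi_1$ to be injective --- and works for every nontrivial $G$.
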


It should be possible to strengthen the conclusion 
``countably many'' in Theorem~\ref{thm-intro: count many} 
to ``a continuum of'', but we only prove this
under a technical assumption 
which holds in many cases of interest,
including when $\pi_1(V)$ is a Poincar{\'e} duality group,
or when $\pi_1(V)$ does not contain a subgroup
isomorphic to $\mathbb Z^3$,
see Section~\ref{sec: without R-factors}.

The structure of the paper is as follows:
Sections~\ref{sec: center},
\ref{sec: grounded}, \ref{sec: visib}
discuss when a group fixes a Busemann function,
and in particular, justify 
Examples~\ref{ex-intro: busemann list},
\ref{intro-ex: flat}, \ref{intro-ex: higher rank}.
In Section~\ref{sec: with R-factors} we study manifolds
with $\mathbb R$-factors, and prove
Theorem~\ref{thm-intro: reg nbhd} and 
Corollary~\ref{cor-intro: Rn}.
In Section~\ref{sec: without R-factors}
we construct manifolds without $\mathbb R$-factors,
and prove Theorem~\ref{thm-intro: count many}
as well as its harder uncountable version. 
The latter requires a certain family of homology 
$3$-spheres described in Appendix~\ref{sec: homology sph}.
Throughout the paper we make implicit use of
various finiteness results for groups, which are
recalled in Appendix~\ref{sec: app on finitenss}.

{\bf Acknowledgments.} 
The author is grateful for NSF support (DMS-1105045)
and to MathOverflow for a stimulating work environment.
Thanks are due to Karel Dekimpe for 
Examples~\ref{ex: dekimpe 3d} and~\ref{ex: dekimpe poly},
Misha Kapovich for Example~\ref{ex: Kapovich ex},
Andy Putman and Wilberd van der Kallen for
help with Proposition~\ref{prop: Putman ex}.

\section{Parabolics in the center and invariant Busemann functions}
\label{sec: center}

Background references for nonpositive curvature
are~\cite{BGS, Ebe-book, BriHae}. 
In this section $G$ is a discrete isometry group 
of a Hadamard manifold $X$. 

\begin{lem}
\label{lem: periodic group}
If $W$ is a closed convex subset of $X$, then 
any abelian discrete subgroup $A\le\Iso(W)$ that contains no parabolic isometries is finitely
generated of rank $\le\dim(W)$.
\end{lem}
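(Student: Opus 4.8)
The plan is to use that $W$, being a closed convex subset of a Hadamard manifold, is a proper complete CAT($0$) space, so $\Iso(W)$ is locally compact in the compact--open topology and acts properly on $W$; hence the discrete group $A$ acts properly discontinuously, has finite point stabilizers, and is countable. Since $A$ contains no parabolics, every $a\in A$ is semisimple, and $a$ is elliptic precisely when it fixes a point, which by proper discontinuity happens precisely when $a$ has finite order. So the torsion subgroup $T\le A$ is exactly the set of elliptic elements, and it is locally finite.

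The first substantive point is that $T$ is finite, which I would deduce from a fixed-point lemma: if a group $\Gamma$ acting by isometries on a complete convex subset $C$ of a Hadamard manifold is a directed union $\Gamma=\bigcup_n\Gamma_n$ of subgroups each having a fixed point in $C$, then $\Gamma$ has a fixed point in $C$. Indeed $\Fix(\Gamma_n)$ is a decreasing sequence of nonempty closed convex subsets whose dimensions are nonincreasing and bounded by $\dim C$, hence eventually equal to some $\delta$; once $\dim\Fix(\Gamma_{n+1})=\dim\Fix(\Gamma_n)=\delta$, each $g\in\Gamma_{n+1}$ fixes a $\delta$-dimensional, hence relatively open, subset of the convex set $\Fix(\Gamma_n)$, and an isometry of a convex subset of a Riemannian manifold that is the identity on a nonempty open set is the identity (Myers--Steenrod, together with connectedness of the interior), so $g$ fixes $\Fix(\Gamma_n)$ pointwise; thus the sequence stabilizes and $\Gamma$ fixes the stable set pointwise. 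Applying this to $T$, exhausted by finite subgroups, produces a $T$-fixed point, and proper discontinuity then forces $T$ to be finite.

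Next I reduce to the torsion-free case. The set $W':=\Fix(T)$ is nonempty (finite group on a CAT($0$) space), closed, convex, and $A$-invariant since $A$ centralizes $T$; as $T$ acts trivially on $W'$, the quotient $\bar A:=A/T$ acts on $W'$, freely and properly discontinuously (stabilizers in $A$ of points of $W'$ are torsion, hence lie in $T$). The action remains by semisimple isometries: for $a$ of infinite order, $a$ is hyperbolic on $W$, and nearest-point projection onto the $a$-invariant convex set $W'$ is $1$-Lipschitz and $a$-equivariant, so it carries a point of $\Min_W(a)$ into $\Min_W(a)\cap W'$; with torsion-freeness this makes $a$ hyperbolic on $W'$. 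Since $\dim W'\le\dim W$, we may assume henceforth that $A$ is torsion-free abelian, all of whose elements are hyperbolic, acting freely and properly discontinuously on a proper CAT($0$) space $W$ with $d:=\dim W$.

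For such $A$ I would finish with the Flat Torus Theorem. Any finitely generated $B\le A$ is free abelian and provides an isometrically embedded flat $\mathbb{E}^{\operatorname{rank}B}\subseteq W$, so $\operatorname{rank}B\le d$ and $r:=\operatorname{rank}A$ is finite. Choose finitely generated $B_0\le A$ of full rank $r$ and write $\Min(B_0)=Z\times\mathbb{E}^{r}$, with $B_0$ trivial on $Z$ and a cocompact translation lattice $\Lambda_0$ on $\mathbb{E}^r$. Because $A$ normalizes $B_0$ it preserves this splitting; the induced action on $\mathbb{E}^r$ is by translations (anything commuting with the cocompact translation group $\Lambda_0$ is a translation), giving $\rho\co A\to\R^r$ with $\rho(B_0)=\Lambda_0$, while the induced action on $Z$ factors through the torsion group $A/B_0$. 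The map $\rho$ is injective: an element of $\ker\rho$ that also acts trivially on $Z$ acts trivially on all of $\Min(B_0)$, hence has a fixed point and is torsion, hence trivial; so $\ker\rho$ injects into the image of the $A$-action on $Z$, which is torsion, and being torsion-free $\ker\rho$ is trivial. Thus $A\cong\rho(A)\le\R^r$ is a subgroup of rank $r$ containing the full-rank lattice $\Lambda_0$, so $A$ is finitely generated iff $\rho(A)$ is discrete. If it were not, pick distinct $a_n\in A\setminus\{1\}$ with $\rho(a_n)\to 0$; the fixed-point lemma applied to the locally finite $A$-action on $Z$ yields $z_\ast\in Z$ fixed by $A$, and then $p=(z_\ast,x_0)\in Z\times\mathbb{E}^r$ satisfies $d_W(p,a_np)=\|\rho(a_n)\|\to 0$ with the $a_np$ distinct, contradicting discreteness of the orbit. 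Hence $\rho(A)$ is discrete, $A\cong\Z^{r}$, and $r\le\dim W$. I expect the finite-generation assertion, not the rank bound, to be the crux: excluding ``divisible'' behaviour genuinely uses proper discontinuity, and the delicate point is controlling the factor $Z$ complementary to a maximal flat in $\Min(B_0)$ — which is exactly why it pays to isolate the fixed-point lemma and apply it both to the torsion subgroup and to the residual action on $Z$, and the same lemma crucially needs $W$ to be a convex subset of a \emph{Riemannian} manifold rather than merely a CAT($0$) space.
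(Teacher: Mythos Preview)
Your proof is correct but takes a considerably more hands-on route than the paper. The paper simply invokes \cite[Lemma~7.1(1)]{BGS}, which for an arbitrary abelian group of semisimple isometries (no finite-generation hypothesis) already asserts that $\bigcap_{g\in A}\Min(g)$ is nonempty and splits as $C\times\mathbb R^k$ with $A$ acting trivially on $C$ and by translations on $\mathbb R^k$; discreteness of $A$ then forces the translation image to be a lattice $\mathbb Z^m$ with $m\le k\le\dim W$, and the kernel is finite since it fixes points. You effectively reconstruct this structural result from scratch: first isolate torsion via a fixed-point lemma, then apply the Flat Torus Theorem only to a finitely generated full-rank subgroup $B_0$, and finally run a discreteness-by-contradiction argument (using the fixed-point lemma again on the complementary factor $Z$) to pass from $B_0$ to all of $A$. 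The payoff of your route is that it makes transparent exactly where finite dimensionality and proper discontinuity are used; the paper's citation hides this inside the BGS reference. One small remark: your fixed-point lemma, as stated for a general directed union $\Gamma=\bigcup_n\Gamma_n$, implicitly needs each $g\in\Gamma_{n+1}$ to preserve $\Fix(\Gamma_n)$ so that the Myers--Steenrod rigidity step applies to $g|_{\Fix(\Gamma_n)}$; this is not automatic without normality, but it does hold in both of your applications because the ambient group is abelian.
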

\begin{proof} 
By~\cite[lemma 7.1(1)]{BGS} 
the set $\bigcap_{g\in A}\Min(g)$ is a closed
$A$-invariant convex subset of $M$ which splits as
$C\times\mathbb R^k$, where $A$ acts trivially on
the $C$-factor, and by translations on $\mathbb R^k$.
Since $A$ is discrete, it acts freely on $\mathbb R^k$
so $A\cong \mathbb Z^m$ for $m\le k$.
\end{proof}

\begin{lem} 
\label{lem: center has parabolic}
$G$ preserves a Busemann function
if it has a finite index subgroup $G_0$ whose center $Z(G_0)$
contains a parabolic isometry.
\end{lem}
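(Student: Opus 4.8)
The plan is to prove, by induction on $\dim X$, the following statement, from which the lemma follows easily: \emph{if $N$ is a group of isometries of a Hadamard manifold $X$ whose center $Z(N)$ contains a parabolic isometry, then $N$ preserves a Busemann function.} To deduce the lemma, replace $G_0$ by its normal core $N\trianglelefteq G$, still of finite index $m=[G:N]$; since $c$ centralizes $G_0\supseteq N$, the element $c^m$ lies in $N$, centralizes $N$, so belongs to $Z(N)$, and a short argument with fixed-point sets and minimal sets shows that powers of parabolics are parabolic, so $Z(N)$ contains a parabolic — write $c$ for it. Given the displayed statement, $N$ preserves some $b_\eta$, so $\mathcal F=\{\eta\in\partial X:\ h\eta=\eta\ \text{and}\ b_\eta\circ h=b_\eta\ \text{for all}\ h\in N\}$ is nonempty; it is Tits-closed, Tits-convex, contained in the set of points at infinity fixed by $c$ (hence of finite Tits-radius), and $G$-invariant because $N\trianglelefteq G$, so $G$ fixes its canonical circumcenter $\xi^*$. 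The map $g\mapsto p_g$, where $b_{\xi^*}\circ g=b_{\xi^*}+p_g$, is a homomorphism $G\to\mathbb R$ vanishing on the finite-index subgroup $N$, hence zero; thus $G$ preserves $b_{\xi^*}$.

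For the inductive step, let $c\in Z(N)$ be parabolic. The displacement function $d_c(x)=d(x,cx)$ is convex, and $N$-invariant since every element of $N$ commutes with $c$. Because $c$ is parabolic, $\ell:=\inf d_c$ is not attained, so the nested nonempty closed convex sublevel sets $\{d_c\le\ell+\e\}$, $\e\downarrow0$, have empty intersection in $X$, and their common limit in the bordification $\overline X=X\cup\partial X$ is a nonempty Tits-convex set $\Lambda\subseteq\partial X$, fixed pointwise by $c$ and preserved by $N$. Since $c$ is parabolic, $\Lambda$ has finite Tits-radius, hence a canonical circumcenter $\xi$, which is therefore fixed by $N$. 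To show $N$ fixes $b_\xi$, consider $f(s)=\inf\{d_c(y):b_\xi(y)=s\}$: the inward gradient flow $\varphi_t$ of $b_\xi$ is $1$-Lipschitz and commutes with $c$ (which fixes $\xi$), so $d_c\circ\varphi_t\le d_c$, whence $f$ is monotone; and $f\to\ell$ as one moves toward $\xi$, because $\xi\in\Lambda$ forces each $\{d_c\le\ell+\e\}$ to penetrate every horoball about $\xi$. For $g\in N$ write $b_\xi\circ g=b_\xi+p_g$; since $g$ preserves $d_c$ and shifts the horospheres about $\xi$ by $p_g$, we get $f(\,\cdot-p_g)=f$, and a bounded monotone periodic function is constant.

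Thus either $p_g=0$ for all $g\in N$ — so $N$ preserves $b_\xi$, completing the step — or $f\equiv\ell$. In the degenerate case $f\equiv\ell$ the sets $\{d_c\le\ell+\e\}$ meet every horosphere about $\xi$; letting $\e\to0$ either yields a point where $d_c=\ell$ (impossible, since $\ell$ is not attained) or produces a Tits-antipode of $\xi$ whose $N$-invariant parallel set in $X$ splits off an $\mathbb R$-factor, reducing the statement to one in strictly lower dimension, to which the inductive hypothesis applies. I expect this degenerate case to be the main obstacle: extracting the lower-dimensional invariant subspace from ``$f\equiv\ell$'', checking that parabolicity (in particular for mixed parabolics, $\ell>0$) is inherited, and transporting the resulting invariant Busemann function back to $X$. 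The nondegenerate case and both reductions are routine manipulations with convex functions, Busemann functions, and the Tits metric.
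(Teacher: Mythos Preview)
Your approach has genuine gaps, and it is also far more elaborate than what the situation requires. The paper's argument is a two-line averaging trick: pick coset representatives $g_1,\dots,g_k$ of $G_0$ in $G$ and form the convex function $x\mapsto\sum_i d(g_i^{-1}zg_ix,x)$; centrality of $z$ in $G_0$ makes each summand independent of the coset representative, so the sum is $G$-invariant, and since $d(zx,x)$ does not attain its infimum, neither does the sum. One then invokes the standard limiting argument (\cite[Lemma 3.9]{BGS}, \cite[Lemma II.8.26]{BriHae}) that any $G$-invariant convex function not attaining its infimum yields, via Arzel\`a--Ascoli, a $G$-invariant Busemann function. The paper explicitly remarks that this Busemann function is \emph{not} explicit and that ``there seems to be no way to find a $G$-invariant Busemann function associated with a given point of $X(\infty)$'' --- which is precisely what your circumcenter strategy attempts.

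The specific problems are these. First, your repeated claim that the fixed-point set of a parabolic $c$ (or the limit set $\Lambda$) has Tits-radius $\le\pi/2$, hence a canonical circumcenter, is exactly Schroeder's result \cite[Appendix~3]{BGS} that an abelian group containing a parabolic is grounded; it is nontrivial and you are invoking it without proof in both the reduction step and the inductive step. Second, the set $\mathcal F$ of boundary points whose Busemann function is $N$-invariant is not obviously Tits-convex: the Busemann character $\eta\mapsto(g\mapsto b_\eta(gx)-b_\eta(x))$ need not vanish at a Tits-midpoint just because it vanishes at the endpoints. Third --- and you acknowledge this yourself --- the degenerate case $f\equiv\ell$ is left as a sketch: producing a Tits-antipode, splitting off an $\mathbb R$-factor in an $N$-invariant way, verifying that parabolicity descends, and lifting the invariant Busemann function back are each substantial. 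Given that the averaging argument sidesteps all of this, I would abandon the inductive route.
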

\begin{proof}
This result is implicit in~\cite[Lemma 7.3, 7.8]{BGS},
but for completeness we record a proof below.
Let $z\in Z(G_0)$ be a parabolic isometry, 
and $g_1,\dots g_k$ are right coset representatives
of $G_0$ in $G$, then the 
function $x\to \sum_i d(g_i^{-1}zg_i x, x)$
is convex, and $G$-invariant because $d(g_i^{-1}zg_i x, x)$
is independent of a coset representative in $G_0 g_i$ as 
$z\in Z(G_0)$.
Since $x\to d(zx, x)$ does not assume its infimum,
neither does the above convex function. Then
a limiting process outlined in~\cite[Lemma 3.9]{BGS}, 
and fully explained in~\cite[Lemma II.8.26]{BriHae},
gives rise via Arzela-Ascoli theorem to a $G$-invariant
Busemann function. 
Note that this Busemann function is
not explicit and there seems to be no way to find
a $G$-invariant Busemann function associated with 
a given point
of $X(\infty)$. 
\end{proof}

Flat Torus Theorem~\cite[Chapter II.7]{BriHae} implies that
a discrete isometry group of $X$ whose center consists of axial
isometries is quite special, which for our purposes
is summarized as follows.

\begin{thm}\label{thm: into-center}
Let $G$ be a group with subgroups $H$, $G_0$ such that
their centers $Z(H)$, $Z(G_0)$ are infinite, 
$Z(H)\subseteq Z(G_0)$, the index of $G_0$ in $G$
is finite, and one of the following conditions hold: \newline
\textup{(1)} $Z(H)$ is not finitely generated;\newline
\textup{(2)} any homomorphism $H\to\mathbb R$ is trivial.\newline
\textup{(3)} $H$ is finitely generated, and
$Z(H)$ contains a free abelian subgroup
that is \newline\phantom{\textup{(3)}} 
not a direct factor of any 
finite index subgroup of $H$.\newline
Then any effective, properly discontinuous, 
isometric $G$-action
on a Hadamard manifold fixes a Busemann function.
\end{thm}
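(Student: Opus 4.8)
The plan is to funnel all three cases into Lemma~\ref{lem: center has parabolic}: it suffices to show that, for the given action of $G$ on a Hadamard manifold $X$, the center $Z(G_0)$ contains a parabolic isometry. So I fix such an action (whence $G$ is discrete in $\Iso(X)$) and argue by contradiction, assuming $Z(G_0)$ — equivalently, since $Z(H)\subseteq Z(G_0)$, the group $Z(H)$ — contains no parabolic. On a Hadamard manifold an isometry is parabolic precisely when its displacement function fails to attain its infimum, so under this assumption every element of $Z(H)$ and of $Z(G_0)$ is semisimple. Then Lemma~\ref{lem: periodic group}, applied with $W=X$, shows the abelian discrete group $Z(G_0)$ is finitely generated, hence so is its subgroup $Z(H)$; this already contradicts hypothesis~(1).

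For (2) and (3), fix a nontrivial free abelian subgroup $A\cong\mathbb Z^k$ of $Z(H)$ — any infinite cyclic one will do in case (2), and in case (3) take the distinguished subgroup supplied by the hypothesis (which is finitely generated, since $Z(H)$ now is). It is central in $H$ and consists of semisimple isometries, so by the Flat Torus Theorem $\Min(A)=\bigcap_{a\in A}\Min(a)$ is a nonempty closed convex set splitting isometrically as $Y\times\mathbb{E}^k$, with $A$ acting trivially on $Y$ and as a cocompact lattice of translations on $\mathbb{E}^k$. Since $A$ is central, $H$ preserves $\Min(A)$ together with this canonical splitting, so it acts on $\mathbb{E}^k$ by affine isometries; and as each $h\in H$ commutes with $A$, the rotational part of $h$ fixes every translation vector of $A$, and these span $\mathbb R^k$, so that rotational part is trivial. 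Hence $h\mapsto(\text{translation vector of }h\text{ on }\mathbb{E}^k)$ defines a homomorphism $\tau\co H\to\mathbb R^k$ whose restriction to $A$ is injective with image a lattice $\Lambda$. For (2) this is already a contradiction: $\tau\ne 0$, so composing it with a linear functional on $\mathbb R^k$ produces a nontrivial homomorphism $H\to\mathbb R$.

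For (3) I bring in that $H$ is finitely generated: then $\tau(H)$ is a finitely generated torsion-free abelian group containing $\Lambda$ as a finite-index sublattice of its saturation, so for a suitable finite-index subgroup $T\le\tau(H)$ one has a direct-sum decomposition $T=\Lambda\oplus M$. Pulling back, $H^{*}:=\tau^{-1}(T)$ has finite index in $H$, and the composite $H^{*}\xrightarrow{\ \tau\ }T\to\Lambda$ is a retraction onto $\tau(A)$ that restricts to an isomorphism on $A$; consequently $H^{*}$ is the internal direct product of $A$ and the kernel of this retraction. Thus $A$ is a direct factor of a finite-index subgroup of $H$, contradicting hypothesis~(3). (One can instead quote the standard Flat-Torus-Theorem corollary that some finite-index subgroup of $H$ splits as a direct product with a finite-index subgroup $A'$ of $A$ as a factor, and then upgrade $A'$ to $A$ using centrality: if $A'\times C$ has finite index in $H$ then so does $A\cdot C$, and $A\cap C\subseteq A'\cap C=1$, so $A\cdot C=A\times C$.)

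The applications of the Flat Torus Theorem and of the parabolic/semisimple dichotomy are routine; the delicate point is the bookkeeping in case (3), namely obtaining the direct-factor conclusion for $A$ on the nose rather than only for a finite-index subgroup of it, and recognizing that finite generation of $H$ is precisely what keeps the indices $[\,\tau(H):T\,]$ and $[\,H:H^{*}\,]$ finite.
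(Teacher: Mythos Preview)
Your proof is correct and follows essentially the same strategy as the paper: reduce via Lemma~\ref{lem: center has parabolic} to finding a parabolic in $Z(H)$, rule out~(1) by Lemma~\ref{lem: periodic group}, and handle (2)--(3) with Flat Torus Theorem machinery. The only cosmetic difference is that for~(2) the paper quotes \cite[Remark~II.6.13]{BriHae} to conclude $Z(H)$ has no axial elements and is therefore finite, whereas you exhibit the nontrivial homomorphism $H\to\mathbb R$ directly via the translation map $\tau$; and for~(3) the paper cites \cite[Theorem~II.7.1]{BriHae} outright while you unpack the splitting argument --- your parenthetical alternative is exactly the paper's route.
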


The reader may want to first think through 
the case when $H=G_0=G$, and then go on
to observe that if Theorem~\ref{thm: into-center}
applies to $H$, $G_0$, $G$, then it also 
applies to $H$, $G_0\times K$, $G\times K$
for any group $K$.

\begin{proof}
The result follows by Lemma~\ref{lem: center has parabolic}
once we show that $Z(G_0)$ contains a parabolic isometry.
Since $Z(H)\subseteq Z(G_0)$ it suffices to find
a parabolic element in $Z(H)$. Suppose no such element exists. 
Then $Z(H)$ is finitely generated by 
Lemma~\ref{lem: periodic group}, which implies (1).

To prove (2) note that if $H$
has no nontrivial homomophism into $\mathbb R$,
then by~\cite[Remark II.6.13]{BriHae}
$Z(H)$ contains no axial isometries, so 
it is a finitely generated, abelian, and periodic,
and hence finite which contradicts the assumptions.

Finally, (3) is deduced as follows. Since $Z(H)$ is finitely
generated and infinite, it has a free abelian 
subgroup $A$ of positive rank. 
Under the assumption that $H$ is finitely generated, 
\cite[Theorem II.7.1]{BriHae} implies that
$H$ has a finite index subgroup that contains $A$
as a direct factor.
\end{proof}

\begin{ex}
Theorem~\ref{thm: into-center}(1) applies, 
e.g. to any infinitely generated, torsion-free, countable
abelian group of finite rank, such as $(\mathbb Q, +)$,
where finiteness of rank ensures finiteness of cohomological 
dimension~\cite[Theorem 7.10]{Bie-book}.
\end{ex}

\begin{quest}
Is there a group of type {\it F} with infinitely generated
center?
\end{quest}

\begin{rmk}
There exists a finitely presented group with solvable word problem
whose center contains every countable abelian group~\cite[Corollary 3]{Oul-center}.
On the other hand, if $H$ is a group with infinitely generated center, then
$H$ cannot be embedded into hyperbolic or CAT($0$) groups
because their abelian subgroups
are finitely generated. If in addition $H$ has type 
{\it F}, or even of type {\it FP}, then $H$ 
cannot be linear over a field of characteristic 
zero~\cite[Corollary 5]{AlpSha82}, 
and cannot be elementary amenable, as we explain below
for completeness.
\end{rmk}

\begin{prop}
Every elementary amenable group of type {\it FP} has finitely generated center.
\end{prop}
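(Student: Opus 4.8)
The plan is to combine two known structural facts about elementary amenable groups of type $FP$: first, that such groups are virtually solvable (or at least have a very restricted structure), and second, that the center is controlled by finite generation of the abelianization-type invariants. More precisely, by a theorem of Kropholler (building on Hillman–Linnell), a group of type $FP_\infty$ that is elementary amenable is virtually solvable of finite Hirsch length; and in fact $FP$ (equivalently $FP_1$ together with finite cohomological dimension, or just $FP$ over $\Z$) already forces finite Hirsch length and virtual solvability. So the first step is to reduce to the case of a torsion-free solvable group $H$ of finite Hirsch length and type $FP$, passing to a finite-index subgroup (this is harmless since a group has finitely generated center iff a finite-index subgroup does, because the center of a finite-index subgroup is commensurable with the intersection of the center with that subgroup, hence finite-index in a subgroup of $Z(H)$).

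Next I would exploit finiteness of the Hirsch length. For a torsion-free solvable group of finite Hirsch length $h$, every abelian subgroup has torsion-free rank at most $h$, in particular the center $Z(H)$ has finite torsion-free rank. So the only way $Z(H)$ could fail to be finitely generated is if it contains a subgroup isomorphic to a group like $\Z[1/p]$ or $\mathbb{Q}$ — i.e. it has finite rank but is not finitely generated. The heart of the argument is to rule this out using the type $FP$ hypothesis. Here the key tool is a result on the structure of solvable groups of type $FP_\infty$ (or even $FP_2$): by Bieri–Strebel and subsequent work, a finitely generated solvable group of type $FP_2$ is virtually of type $FP_\infty$ and has nice finiteness properties; crucially, a solvable group of type $FP$ has all its abelian subnormal subgroups finitely generated — indeed one can argue that a group of type $FP$ (which in particular is finitely generated and has finite cohomological dimension) cannot contain $\mathbb{Q}$ or $\Z[1/p]$ as a \emph{central} subgroup, because such a central subgroup $C$ would give an infinite ascending chain that obstructs finite generation of $H$ itself, contradicting that $FP$ implies finitely generated.

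More carefully, the cleanest route for the central case: if $C \le Z(H)$ is not finitely generated but has finite rank, write $C$ as an increasing union $C_1 < C_2 < \cdots$ of finitely generated subgroups. Since $H$ is finitely generated, say by $S$, and $C$ is central, consider $H/C_i$. One shows using the $FP_2$ (or $FP$) property — via the fact that type $FP_2$ is equivalent to finite presentability for the relevant classes, or via a spectral sequence / $\operatorname{Tor}$ argument with the central extension $1 \to C \to H \to H/C \to 1$ — that $H_1(H;\Z)$ or an appropriate homology group would have to be infinitely generated, contradicting that $H$ is $FP_1$. Alternatively, and perhaps most transparently, invoke that $H$ of type $FP$ has finite cohomological dimension $n = \operatorname{cd}(H)$, and a central copy of $\mathbb{Q}$ would force, by the Lyndon–Hochschild–Serre spectral sequence for $1 \to \mathbb{Q} \to H \to Q \to 1$, that $\operatorname{cd}(\mathbb{Q}) \le n$; but $\operatorname{cd}(\mathbb{Q}) = 2$ is finite, so this alone is not a contradiction — the real contradiction comes from combining finite Hirsch length with the detailed solvable-group finiteness theory (Kropholler's theorem that $FP_\infty$ elementary amenable groups are virtually torsion-free, virtually solvable, and of finite Hirsch length, \emph{with} the stronger conclusion that they are constructible or at least have finitely generated abelian subnormal subgroups).

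The main obstacle I anticipate is the last step: pinning down precisely which finiteness theorem for solvable (or elementary amenable) groups of type $FP$ rules out a finite-rank non-finitely-generated central subgroup. The Hirsch-length argument gives finite rank for free, but upgrading ``finite rank'' to ``finitely generated'' for the center genuinely uses that $H$ itself is finitely generated together with the solvable structure — a group like $\mathbb{Q}$ is finitely generated neither as a group nor is it of type $FP_1$, so the content is that a \emph{central} subgroup of a finitely generated solvable group of finite Hirsch length, if it has finite rank, must be finitely generated, which should follow because the quotient is again finitely generated solvable of smaller Hirsch length and one can induct, using that in a finitely generated solvable group a central subgroup with finitely generated quotient-behaviour is itself finitely generated (this is where one cites Bieri's book or Kropholler). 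I would structure the writeup to isolate this as a clearly-cited lemma rather than reproving it.
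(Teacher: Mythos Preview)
Your reduction to a torsion-free solvable finite-index subgroup $G_0$ of type $FP$ is correct and matches the paper. The gap is in the final step: you never identify a concrete mechanism that upgrades ``finite rank'' to ``finitely generated'' for the center. Your attempts via $H_1$ or the Lyndon--Hochschild--Serre spectral sequence do not work as stated (you yourself note that $\cd(\mathbb Q)=2$ gives no contradiction), and the claim that ``a central subgroup of a finitely generated solvable group of finite Hirsch length, if it has finite rank, must be finitely generated'' is simply false without further hypotheses --- the lamplighter-type group $\mathbb Z[\tfrac12]\rtimes\mathbb Z$ is finitely generated, solvable of Hirsch length $2$, and has finite-rank non-finitely-generated abelian normal subgroup (not central, but your inductive sketch would equally fail to exclude such behaviour in a central position one step up). So you cannot leave this as ``a clearly-cited lemma''; it is the entire content of the proposition.

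The paper's route is the one you brush past: Kropholler's theorem that solvable groups of type $FP$ are \emph{constructable}, combined with the Baumslag--Bieri fact that every quotient of a constructable group is finitely presented. This gives that $G_0/Z(G_0)$ is finitely presented, and then one invokes the elementary (but perhaps not widely known) fact that if a finitely generated group has finitely presented quotient by its center, the center is finitely generated. This is the missing link you were looking for; the argument is short once you have it, but none of your alternative routes reach it.
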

\begin{proof}
Since $G$ has type {\it FP}, its cohomological dimension is finite.
Any elementary amenable of finite cohomological dimension is virtually solvable~\cite{HilLin92}
and of course torsion-free. As $G$ has type 
{\it FP}, so does
the solvable finite index subgroup $G_0$ of 
$G$~\cite[Theorem VIII.6.6]{Bro-book}. 
Solvable groups of type {\it FP},
are constructable~\cite{Kro86}
and hence quotients of $G_0$ are finitely 
presented~\cite[Theorem 4]{BauBie76}.
So $Z(G_0)$ is finitely generated, for
if the quotient of a finitely generated group
by the center is finitely presented, then the center is 
finitely generated~\cite[Corollary 1, page 54]{Bau-book}.
Hence the subgroup $Z(G)\cap G_0$ of $Z(G_0)$
is finitely generated, and therefore the same holds
for $Z(G)$.
\end{proof}

\begin{ex}
Here is an example of a group of type {\it F}
with infinite center
and finite abelianization (to which 
Theorem~\ref{thm: into-center}(2) applies).
Let $G$ be the preimage
of a lattice $\Gamma\le Sp_{2n}(\mathbb R)$ under the universal 
cover, then $Z(G)$ is infinite,
and $G$ has Kazhdan property (T) when 
$n\ge 2$~\cite[Example 1.7.13]{BHV},
and hence finite abelianization.
If $\Gamma$ is torsion-free, then it has type 
{\it F}~\cite{BorSer74}, and hence so does $G$,
see Appendix~\ref{sec: app on finitenss}.
Note that Theorem~\ref{thm: into-center}(3) also 
applies to $G$ because its finite index subgroups
have property (T) and hence finite abelianization
which implies that they cannot
have $\mathbb Z$ as a direct factor.	
\end{ex}

Theorem~\ref{thm: into-center}(3) also applies
when $G$ is virtually nilpotent, finitely generated,
and not virtually abelian, namely, we let $H=G_0$ be
a nilpotent torsion-free subgroup of finite index, and
use the following.

\begin{prop} 
\label{prop: nil not virt split}
If $H$ is a 
torsion-free, nonabelian,
nilpotent group $H$,
then $Z(H)$ is not a virtual direct factor of $H$.
\end{prop}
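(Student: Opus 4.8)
The plan is to argue by contradiction, using the Malcev (rational) completion, which is available since $H$ is torsion-free nilpotent. Recall that $H$ embeds into a torsion-free, uniquely divisible, nilpotent group $\widehat H$ --- its Malcev completion --- in which every element has a positive power lying in $H$; uniqueness of roots makes this construction well behaved. I will use three standard consequences: (a) if $H_0\le H$ has finite index then $\widehat{H_0}=\widehat H$ inside $\widehat H$; (b) $\widehat{A\times N}=\widehat A\times\widehat N$; and (c) in a torsion-free nilpotent group centralizers, hence the centre, are isolated (root-closed) subgroups. Unwinding the statement, ``$Z(H)$ is a virtual direct factor of $H$'' amounts to: there is a finite-index subgroup $H_0\le H$ and a subgroup $N\le H_0$ with $H_0=A\times N$, where $A\le Z(H)$ has finite index in $Z(H)$ (it is harmless whether one takes $A=Z(H)\cap H_0$ or a finite-index subgroup of it, since $Z(H)\cap H_0$ already has finite index in $Z(H)$). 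Assume such a decomposition exists.

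First I would identify $\widehat A$ with $Z(\widehat H)$. By (c), $Z(\widehat H)$ is an isolated subgroup of $\widehat H$, and it is divisible; moreover $Z(\widehat H)\cap H=Z(H)$ (one inclusion is trivial, the other uses (c)). Hence $Z(\widehat H)$ is the isolator, i.e. the divisible hull, of $Z(H)$ in $\widehat H$: $Z(\widehat H)=\widehat{Z(H)}$. Since $A$ has finite index in $Z(H)$, it has the same isolator, so $\widehat A=\widehat{Z(H)}=Z(\widehat H)$.

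Now I combine the pieces. By (a), $\widehat{H_0}=\widehat H$, and by (b), $\widehat H=\widehat{H_0}=\widehat A\times\widehat N=Z(\widehat H)\times\widehat N$. Taking centres of both sides gives $Z(\widehat H)=Z(\widehat H)\times Z(\widehat N)$, which forces $Z(\widehat N)=1$; since $\widehat N$ is nilpotent this means $\widehat N=1$. Therefore $\widehat H=Z(\widehat H)$ is abelian, and so its subgroup $H$ is abelian --- contradicting the hypothesis that $H$ is nonabelian.

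The step I expect to be the main obstacle --- though it is entirely standard --- is the identification $\widehat A=Z(\widehat H)$: it rests on uniqueness of roots in $\widehat H$ and, underneath, on fact (c), that centralizers in a torsion-free nilpotent group are isolated. A completion-free variant of the same argument would instead show directly that $Z(H_0)=Z(H)\cap H_0$ --- if $z\in Z(H_0)$ and $h\in H$, some power $h^n$ lies in $H_0$, so $[z,h^n]=1$, and isolatedness of $C_H(z)$ gives $[z,h]=1$ --- and then note that $A$ has finite index in $Z(H_0)=A\times Z(N)$, forcing $Z(N)$ to be finite, hence trivial since $H$ is torsion-free, so $N=1$ and $H_0=A$ is abelian, again making $H$ abelian. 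Either way the crux is the isolatedness of centralizers in torsion-free nilpotent groups.
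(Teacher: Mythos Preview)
Your proof is correct, and in fact your ``completion-free variant'' is exactly the paper's argument: the paper shows $Z(H_0)\subseteq Z(H)$ by the same uniqueness-of-roots computation you sketch (if $z\in Z(H_0)$ and $g\in H$ then $g^k\in H_0$ for some $k$, so $(zgz^{-1})^k=g^k$, hence $zgz^{-1}=g$), concludes that the complementary factor has trivial centre and is therefore trivial, and finishes with the same root argument to pass from $H_0$ abelian to $H$ abelian. The only cosmetic difference is that the paper assumes the finite-index subgroup literally contains $Z(H)$ as a direct factor, while you allow a finite-index subgroup $A$ of $Z(H)$; your formulation is marginally more general but the proof is identical.

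Your primary route via the Malcev completion is a genuinely different packaging. It trades the two explicit root computations for the structural facts $\widehat{H_0}=\widehat H$, $\widehat{A\times N}=\widehat A\times\widehat N$, and $\widehat{Z(H)}=Z(\widehat H)$, after which the contradiction drops out from $Z(\widehat H)=Z(\widehat H)\times Z(\widehat N)$. This buys conceptual clarity --- the obstruction is visibly that a divisible nilpotent group cannot split off its own centre nontrivially --- at the cost of invoking the completion machinery (which, as you note, works for arbitrary torsion-free nilpotent groups, not just finitely generated ones). The paper's bare-hands approach is shorter and self-contained; yours situates the result in a broader framework. Both ultimately rest, as you correctly identify, on the isolatedness of centralizers (equivalently, uniqueness of roots) in torsion-free nilpotent groups.

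One small point: in your completion-free variant you pass from ``$H_0$ abelian'' to ``$H$ abelian'' without comment; this needs one more invocation of uniqueness of roots (for $g,h\in H$ pick $m,n$ with $g^m,h^n\in H_0$, deduce $[g^m,h^n]=1$, and unwind), which the paper does spell out.
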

\begin{proof}
Arguing by contradiction suppose $H_0$ is a finite 
index subgroup of $H$ that contains $Z(H)$
as a direct factor, i.e. there is $K\mathrel{\unlhd} H_0$
with $H=KZ(H)$ and $K\bigcap Z(H)=1$.
Then $Z(H_0)=Z(K)Z(H)$ where $Z(K)\bigcap Z(H)=1$.
A key observation is that 
$Z(H_0)\subseteq Z(H)$: if we take $z\in Z(H_0)$
and $g\in H$, and pick $k$ with $g^k\in H_0$, then
$g^k=(zgz^{-1})^k$ but in torsion-free nilpotent
groups roots are unique~\cite[Theorem 16.2.8]{KarMer}, 
so $g=zgz^{-1}$, and hence $z\in Z(H)$. 
It follows that $Z(K)$ is trivial, so since $K$
is nilpotent we conclude that $K$ is trivial, i.e.
$H_0=Z(H)$. Now if $g,h\in H$, then $h^k\in H_0$
so $gh^kg^{-1}=h^k$ and again uniqueness of roots
implies  $ghg^{-1}=h$ so that $H$ is abelian.
\end{proof}

It is a well-known fact (which we will not use, 
and hence do not prove) that the group $H$ in 
Proposition~\ref{prop: nil not virt split}
can be realized as the fundamental group
of the circle bundle over a closed nilmanifold, and
noncommutativity of $H$ allows one to choose 
the bundle so that its rational Euler class is nonzero.
Thus Proposition~\ref{prop: nil not virt split}
is also implied by the proposition below.

\begin{prop}
\label{prop: euler class}
If $B$ is an aspherical manifold, and
$E\to B$ is a principal circle bundle
with nonzero rational Euler class,
then the fiber circle is a nontrivial element $s$
in the center of $\pi_1(E)$
that is not a virtual direct factor.
\end{prop}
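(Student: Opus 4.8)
The plan is to analyze the long exact homotopy sequence of the circle bundle $S^1\to E\to B$ together with the Gysin sequence in cohomology. First I would use that $B$ is aspherical and $S^1=K(\mathbb Z,1)$ to identify the situation: from the fibration one gets $\pi_2(E)\to\pi_2(B)=0$, and the boundary map $\pi_2(B)\to\pi_1(S^1)$ is exactly cup product with the Euler class on the level of $\pi_1$'s — more precisely, since $\pi_2(B)=0$, the sequence $0\to\pi_1(S^1)\to\pi_1(E)\to\pi_1(B)\to 1$ is exact, and the image $s$ of a generator of $\pi_1(S^1)$ is central in $\pi_1(E)$ because the circle acts on itself by rotations and the action of $\pi_1(B)$ on the fiber is trivial (the bundle is principal, hence orientable, and $B$ connected). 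So $s\in Z(\pi_1(E))$ immediately; the content is that $s$ has infinite order in $\pi_1(E)$ and is not a virtual direct factor.

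For the infinite-order and non-virtual-factor claims, I would argue by contradiction using rational cohomology. Suppose some finite-index subgroup $G_0\le\pi_1(E)$ contains $\langle s^m\rangle$ (for some $m\ge 1$, allowing $s$ to have finite order as the degenerate case $m=\infty$ is handled separately) as a direct factor, say $G_0=\langle s^m\rangle\times K$. Passing to the corresponding finite cover $\widetilde E\to E$, this cover is itself the total space of a circle bundle over a finite cover $\widetilde B\to B$ (pull back along $\widetilde B\to B$), and its Euler class $\widetilde e$ is the restriction/pullback of $e$, hence still nonzero in $H^2(\widetilde B;\mathbb Q)$ since finite covers induce injections on rational cohomology (transfer argument). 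Now a direct product splitting $G_0\cong\mathbb Z\times K$ with the $\mathbb Z$ generated by (a power of) the fiber would force the bundle over $\widetilde B$ to be, rationally, trivial: concretely, the Gysin sequence
\[
H^0(\widetilde B;\mathbb Q)\xrightarrow{\ \cup\,\widetilde e\ }H^2(\widetilde B;\mathbb Q)\to H^2(\widetilde E;\mathbb Q)\to H^1(\widetilde B;\mathbb Q)\xrightarrow{\ \cup\,\widetilde e\ }H^3(\widetilde B;\mathbb Q)
\]
shows $\widetilde e\ne 0$ iff the map $H^2(\widetilde B;\mathbb Q)\to H^2(\widetilde E;\mathbb Q)$ is not injective. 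On the other hand, if $G_0=\langle s^m\rangle\times K$ then $\widetilde E$ is a $K(G_0,1)$ and $K=K(\pi_1(\widetilde B),1)$ up to the central $\mathbb Z$ (note $\langle s^m\rangle$ maps to $1$ in $\pi_1(\widetilde B)$, so $K\twoheadrightarrow\pi_1(\widetilde B)$, and by comparing the two central extensions $1\to\mathbb Z\to G_0\to\pi_1(\widetilde B)\to 1$ one sees this is an isomorphism), so $\widetilde E\simeq S^1\times\widetilde B$ and the Künneth formula gives that $H^*(\widetilde B;\mathbb Q)\to H^*(\widetilde E;\mathbb Q)$ is injective — contradicting $\widetilde e\ne 0$. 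The degenerate case where $s$ has finite order is even easier: then some finite cover kills $s$, so $\widetilde E\simeq\widetilde B$, forcing $\widetilde e=0$, again a contradiction; this also shows $s$ has infinite order, hence $\langle s\rangle\cong\mathbb Z$ and $s$ is a genuine nontrivial central element.

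The main obstacle I anticipate is the bookkeeping in the last step: making precise that a direct-factor splitting $G_0\cong\mathbb Z\times K$ \emph{with the $\mathbb Z$-factor generated by a power of the fiber class} really does force $\widetilde E$ to be homotopy equivalent to a product $S^1\times\widetilde B$, as opposed to merely having $\pi_1$ a product. This is where asphericity of $B$ (hence of $E$ and all finite covers) is essential — it lets me upgrade the group-theoretic product decomposition to a homotopy equivalence of aspherical spaces, via the fact that a central extension $1\to\mathbb Z\to G_0\to Q\to 1$ that splits as a direct product has classifying map (= Euler class in $H^2(Q;\mathbb Z)$) equal to zero, so the bundle $K(Q,1)$-pullback is trivial. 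One should be careful that the $\mathbb Z$-factor in the splitting might a priori be a different copy of $\mathbb Z$ than $\langle s^m\rangle$; but centrality of $s$ together with uniqueness of the maximal central $\mathbb Z$-summand compatible with the extension structure pins it down — alternatively, one can simply observe that whatever the splitting, $s^m$ lies in the center $Z(G_0)=\langle s^m\rangle\times Z(K)$, and project; the key point is just that $\widetilde e$ becomes rationally trivial, which only needs the extension to split rationally. Once that reduction is clean, the cohomological contradiction via Gysin/Künneth is routine, and the non-triviality of $s$ (infinite order) falls out of the same argument applied to the trivial subgroup $G_0=\pi_1(E)$ with $m$ large.
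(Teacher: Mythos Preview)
Your proposal is correct and follows essentially the same route as the paper: pass to the finite cover containing $s$, use transfer to keep the rational Euler class nonzero, then show the direct-factor hypothesis forces that class to vanish. The paper's last step is simply that a splitting $\pi_1(\hat E)=\langle s\rangle\times K$ yields a section of the circle bundle $\hat E\to\hat B$ (hence zero Euler class), which is what your Gysin/K\"unneth argument amounts to; incidentally, your worry about the $\mathbb Z$-factor being a ``different copy'' is moot, since the hypothesis is precisely that $\langle s\rangle$ itself is the direct factor.
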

\begin{proof}
The circle action on $E$ lifts to any cover
$\hat E$ whose fundamental group contains 
$s$~\cite[Theorem I.9.3]{Bre-book}. Therefore,
the covering $\hat E\to E$ descends to a covering of base
spaces $\hat B\to B$. It follows that $\hat E\to \hat B$
is a pullback of $E\to B$ via $\hat B\to B$.
If $\hat E\to \hat B$ is a finite cover, then
it has a nonzero rational Euler class
because finite covers induce injective maps on 
rational cohomology (due to existence of a transfer).
Finally, if $s$ were a 
direct factor in $\pi_1(\hat E)$, then 
$\hat E\to \hat B$ would have a section, so
the Euler class would be zero. 
\end{proof}

\begin{rmk} Theorem~\ref{thm: into-center}(3)
also applies to the fundamental group of any 
closed Seifert fibered space modelled
on $\widetilde{SL}_2(\mathbb R)$, because it
has a finite index subgroup to which
Proposition~\ref{prop: euler class} applies.
\end{rmk}

Further examples of groups that satisfy 
Theorem~\ref{thm: into-center}(3) 
can be obtained by amalgamating
one such example with any group along a common central
subgroup, which uses the following fact.

\begin{fact}
Let $G:=G_1\ast_A G_2$ where $A$ lies in the center
of $G_1$, $G_2$. If $B\le A$ is a virtual
direct factor of $G$, then $B$ is a virtual
direct factor of $G_1$ and $G_2$.
\end{fact}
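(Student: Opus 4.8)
The plan is to extract a retraction onto $B$ from a direct-factor decomposition of a finite index subgroup of $G$, and then restrict that retraction to the two vertex groups. First I would note that since $A$ is central in $G_1$ and in $G_2$, and these two subgroups generate $G$, the subgroup $A$, and hence $B$, lies in the center of $G$. By the definition of virtual direct factor there is a finite index subgroup $G_0\le G$ with $B\le G_0$ together with a decomposition $G_0=B\times K$; let $p\colon G_0\to B$ be the associated projection. Then $p$ is a retraction of $G_0$ onto $B$, in the sense that it takes values in $B$ and restricts to the identity on $B$, and $\ker p=K$.

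Next I would put $H_i:=G_i\cap G_0$ for $i=1,2$. Since $[G:G_0]<\infty$, each $H_i$ has finite index in $G_i$, and $H_i$ contains $B$ because $B\le A\le G_i$ and $B\le G_0$. The key observation is that the restriction $p|_{H_i}\colon H_i\to B$ is again a retraction onto $B$: it has image in $B\le H_i$ and is the identity on $B$. Since $B$ is normal in $H_i$ (being central in $G$) and admits the retraction $p|_{H_i}$, it follows in the standard way that $H_i=B\times\ker(p|_{H_i})=B\times(H_i\cap K)$. Thus $B$ is a direct factor of the finite index subgroup $H_i$ of $G_i$, i.e. a virtual direct factor of $G_i$, for $i=1,2$, which is the assertion.

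I do not expect a genuine obstacle here: the argument is short once one recognizes that a direct-factor decomposition is the same data as a retraction, and that a retraction survives restriction to any subgroup containing the retract. The remaining points are routine bookkeeping — that $H_i$ has finite index in $G_i$ and contains $B$, and the elementary fact that a normal subgroup $B\trianglelefteq H$ carrying a retraction $H\to B$ is a direct factor of $H$ with complement the kernel. It is worth noting that the amalgam structure is barely used: all the proof really needs is that $A$ embeds centrally into each $G_i$ and that $G=\langle G_1,G_2\rangle$, so no appeal to normal forms or to the Bass--Serre tree is required.
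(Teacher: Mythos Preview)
Your proof is correct and is essentially the paper's own argument, just phrased in the language of retractions: the paper takes the finite index subgroup $K=BL$ and directly verifies $K\cap G_i=B(L\cap G_i)$ with trivial intersection, which is exactly your $H_i=B\times(H_i\cap K)$ obtained via the projection $p$. Your closing remark that the amalgam structure is incidental is also accurate and worth keeping.
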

\begin{proof} Note that $B\le Z(G)$.
If $K$ is a finite index subgroup
of $G$ of the form $BL$, where $L$ is normal in $K$
and $B\cap L=\{1\}$, then $K\cap G_i$ is a finite
index subgroup of $G_i$ of the form $B(L\cap G_i)$
where $L\cap G_i$ is normal in $K\cap G_i$ and
$B\cap (L\cap G_i)=\{1\}$, so $B$ is a virtual direct factor of $G_i$. 
\end{proof}

\section{Grounded groups and invariant Busemann functions}
\label{sec: grounded}

In this section $G$ is a discrete isometry group 
of a Hadamard manifold $X$. 

If $G$ fixes a point at infinity, then $G$
permutes Busemann functions (or equivalently,
permutes horospheres) associated with the point.

\begin{ex}
An axial isometry $z\to 2z$ of the upper half permutes
horospheres centered at $0$, as well as horospheres
centered at $\infty$.
\end{ex}

\begin{fact} If $G$ fixes a point $[b]$ at infinity
represented by a Busemann function $b$, then
the map $g\to b(g x)-b(x)$ is a group homomorphism
$G\to\mathbb (\mathbb R, +)$ 
that depends only on $[b]$, and not on $x$
or $b$.
\end{fact}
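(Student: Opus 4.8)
The plan is to verify the three assertions in turn: (a) the map $\phi_b\co g\mapsto b(gx)-b(x)$ is well-defined independently of $x$; (b) it is a homomorphism $G\to(\mathbb R,+)$; (c) it depends only on the point at infinity $[b]$, not on the particular Busemann function $b$ representing it. First I would record the basic facts about Busemann functions that make this work: each Busemann function $b$ associated to a point $\xi\in X(\infty)$ is a $C^2$ convex function on $X$, any two Busemann functions centered at $\xi$ differ by an additive constant, and for an isometry $g$ of $X$ fixing $\xi$, the function $b\circ g$ is again a Busemann function centered at $\xi$, hence equals $b+c_g$ for some constant $c_g\in\mathbb R$. This last observation is really the crux: since $G$ fixes $[b]$, every $g\in G$ carries Busemann functions centered at $\xi$ to Busemann functions centered at $\xi$.

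Granting this, assertion (a) is immediate: $b(gx)-b(x)=(b\circ g)(x)-b(x)=c_g$, which is independent of $x$, so $\phi_b(g)=c_g$. For (b), the homomorphism property, I would compute for $g,h\in G$ and any basepoint $x$:
\[
\phi_b(gh)=b(ghx)-b(x)=\bigl(b(ghx)-b(hx)\bigr)+\bigl(b(hx)-b(x)\bigr)=\phi_b(g)+\phi_b(h),
\]
where the first parenthesized term is $\phi_b(g)$ evaluated at the basepoint $hx$ (legitimate by the independence-of-basepoint just proved) and the second is $\phi_b(h)$. For (c), suppose $b'=b+c$ is another Busemann function centered at $\xi$; then $b'(gx)-b'(x)=b(gx)+c-b(x)-c=b(gx)-b(x)$, so $\phi_{b'}=\phi_b$. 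Thus $\phi$ depends only on the fixed point at infinity.

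The only real obstacle is assertion (a)'s underlying claim that $b\circ g$ is a Busemann function centered at the same point $\xi$ whenever $g$ fixes $[b]$; everything else is formal. This follows because an isometry maps geodesic rays to geodesic rays and preserves the formula $b_\gamma(x)=\lim_{t\to\infty}\bigl(d(x,\gamma(t))-t\bigr)$ defining the Busemann function of a ray $\gamma$: if $\gamma$ is a ray asymptotic to $\xi$, then $g\circ\gamma$ is a ray asymptotic to $g\xi=\xi$, and $b_{g\circ\gamma}=b_\gamma\circ g^{-1}$, so $b_\gamma\circ g=b_{g^{-1}\circ\gamma}$ is centered at $\xi$ as claimed. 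One should also note that $\phi_b$ being a homomorphism into the additive reals is exactly the obstruction discussed earlier: $\phi_b\equiv 0$ precisely when $G$ stabilizes every horosphere centered at $\xi$, i.e. fixes the Busemann function, which is why triviality of $\mathrm{Hom}(G,\mathbb R)$ (as in Theorem~\ref{thm: into-center}(2)) forces a $G$-invariant Busemann function once a fixed point at infinity is present.
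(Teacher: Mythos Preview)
Your proof is correct and is essentially an explicit unpacking of what the paper invokes as ``the cocycle property of the Busemann function'' (with a reference to Caprace--Monod): the key point in both is that $b\circ g$ is again a Busemann function centered at the same boundary point, hence differs from $b$ by a constant, from which independence of $x$ and the homomorphism property follow formally. The paper simply cites this; you have written out the verification in full.
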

\begin{proof} 
This is immediate from the cocycle property
of the Busemann function, e.g. see the discussion 
after~\cite[Proposition 3.11]{CapMon09}
where the above homomorphism was called
the {\it Busemann character}.
\end{proof}

\begin{rmk}
\label{rmk: no hom into R}
Thus if $G\le\Iso(X)$ has no nontrivial homomorphism
into $\mathbb R$, then
$G$ preserves a Busemann function if and only if 
$G$ fixes a point at infinity. 
\end{rmk}

To further enlarge the class of groups that preserve
Busemann functions we discuss, following 
Schroeder in~\cite[Appendix 3]{BGS} and
Eberlein~\cite[Section 4.4]{Ebe-book}, 
groups which we call {\it grounded} (and which were
called {\it admissible} in~\cite{Ebe-book}).
Namely, equip the ideal boundary $X(\infty)$ of a
Hadamard manifold $X$ with the 
Tits metric, which is $CAT(1)$
and complete~\cite[Theorem II.9.20]{BriHae}, and
call a subgroup $G\le\Iso(X)$ {\it grounded\,} if
its fixed point set at infinity
$X_G(\infty)$ is a nonempty subset of (intrinsic) radius 
$\le\frac{\pi}{2}$, i.e. $X_G(\infty)$ lies
in the closed $\frac{\pi}{2}$-ball centered at a point
of $X_G(\infty)$.

\begin{ex}
There are discrete cocompact subgroups of $\Iso(\mathbb R^n)$ with finite
abelianizations, such as $(\mathbb Z_2\ast\mathbb Z_2)^n$, or
see~\cite{Szc85} for torsion-free examples.
No such group can fix a point at infinity of $\mathbb R^n$
because no lattice fixes a Busemann function. By contrast,
any group of translations, such as the standard $\mathbb Z^n$, fixes the ideal boundary of $\mathbb R^n$ pointwise, yet 
it is not grounded as its Tits boundary is isometric
to the standard unit sphere, which has radius $\pi$.
\end{ex}

\begin{thm} 
\label{thm: intro-grounded}
A discrete, infinite isometry group $G$ of a Hadamard manifold
is grounded if one of the following is true:
\newline
\textup{(1)}
$G$ is as in \textup{Theorem~\ref{thm: into-center}}.
\newline
\textup{(2)}
$G$ has a grounded normal subgroup.
\newline
\textup{(3)}
$G$ is virtually solvable and not virtually-$\mathbb Z^k$ for any $k$.
\newline
\textup{(4)}
$G$ is the union of a nested sequence of grounded subgroups.
\newline
\textup{(5)}
a normal abelian subgroup of $G$ 
contains an infinite $G$-conjugacy class.
\end{thm}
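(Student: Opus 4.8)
The plan is to reduce all five cases to two facts about the ideal boundary $X(\infty)$, which is a complete $CAT(1)$ space on which $\Iso(X)$ acts by isometries of the Tits metric: (a) a \emph{center lemma} --- a group of Tits-isometries that preserves a nonempty Tits-closed Tits-convex subset $Y$ of radius $\le\frac{\pi}{2}$ fixes a point of $Y$, namely the circumcenter of $Y$, which is unique and lies in $Y$ when the radius is $<\frac{\pi}{2}$; and (b) the theorem of Schroeder~\cite[Appendix~3]{BGS} and Eberlein~\cite[Section~4.4]{Ebe-book} that for a parabolic isometry $z$ the fixed set $\Fix(z)\cap X(\infty)$ is nonempty with Tits-radius $\le\frac{\pi}{2}$. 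Since $X_G(\infty)$ is fixed pointwise by $G$, Tits-closed, and Tits-convex, it suffices to exhibit a point $\zeta\in X_G(\infty)$ with $X_G(\infty)\subseteq\bar B_{\pi/2}(\zeta)$. The one genuinely nontrivial piece of $CAT$-geometry is (b); and throughout I will have to handle the borderline case in which a relevant set has circumradius exactly $\frac{\pi}{2}$ and a non-unique circumcenter, for which I would appeal to the Balser--Lytchak theory of canonical centers of bounded $CAT(1)$ sets, or to the observation that for fixed sets of discrete isometry groups a circumradius of $\frac{\pi}{2}$ forces an invariant round sphere, hence a flat factor on which the group acts by Clifford translations.

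From (a) and (b) comes the workhorse: \emph{if $A$ is an abelian discrete isometry group containing a parabolic $z$, then $A$ is grounded}; indeed every element of $A$ commutes with $z$, hence preserves $\Fix(z)\cap X(\infty)$ and fixes its circumcenter $\zeta$, so $\zeta\in X_A(\infty)\subseteq\Fix(z)\cap X(\infty)\subseteq\bar B_{\pi/2}(\zeta)$. Case~(2) is then immediate from (a) applied to $Y=X_N(\infty)$, which $G$ preserves because $N\trianglelefteq G$ and which contains $X_G(\infty)$. Cases~(1) and~(5) reduce to producing a parabolic in a normal abelian subgroup of $G$. For~(1): Theorem~\ref{thm: into-center} supplies a finite-index $G_0$ with a parabolic $z\in Z(G_0)$; replacing $G_0$ by its normal core $\bigcap_g gG_0g^{-1}$ and $z$ by a power lying in that core (a power of a parabolic is parabolic), one obtains a parabolic in the center of a finite-index normal subgroup $N\trianglelefteq G$, and $Z(N)$ is then a normal abelian subgroup of $G$, grounded by the workhorse, so~(2) applies. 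For~(5): if the normal abelian $A$ had no parabolic, then $A\cong\mathbb Z^m$ by Lemma~\ref{lem: periodic group}, and $A$ would stabilize a flat inside $\bigcap_{a\in A}\Min(a)$ on which it acts by a rank-$m$ lattice $L$ of translations; as $G$ normalizes $A$ it preserves this flat and its Euclidean splitting, and the orthogonal transformations of the flat that preserve $L$ form a finite group, whence every $G$-conjugacy class in $A$ would be finite, contradicting the hypothesis. Hence $A$ contains a parabolic and we finish as before.

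For~(3), I would invoke the structure theory of amenable, in particular virtually solvable, discrete isometry groups of Hadamard manifolds: such a group is either \emph{admissible} in Eberlein's sense --- which is exactly what is here called grounded --- or stabilizes a flat on which it acts cocompactly~\cite[Section~4.4]{Ebe-book}; in the latter case $G$ is virtually crystallographic and hence virtually $\mathbb Z^k$ by Bieberbach's theorem, contrary to assumption. (Alternatively one could attempt an induction on derived length via~(5) on the last nontrivial term of the derived series, but keeping track of the flat alternative amounts to reproving the cited structure theorem, so citing it is cleaner.)

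For~(4), write $G=\bigcup_n G_n$ with the $G_n$ grounded and nested, so that $X_G(\infty)=\bigcap_n X_{G_n}(\infty)$ is a nested intersection of nonempty, Tits-closed, Tits-convex sets whose circumradii $r_n$ are non-increasing, since $X_{G_{n+1}}(\infty)\subseteq X_{G_n}(\infty)\subseteq\bar B_{r_n}(\zeta_n)$ for $\zeta_n$ the circumcenter of $X_{G_n}(\infty)$. When $r_n<\frac{\pi}{2}$ for all $n$, applying the $CAT(1)$ convexity inequality to the midpoint of $[\zeta_n,\zeta_m]$ and using that $r_m$ is the circumradius of $X_{G_m}(\infty)$ gives $d(\zeta_n,\zeta_m)^2\le 2(r_n^2-r_m^2)$ for $m>n$, so $\{\zeta_n\}$ is Cauchy; its limit $\zeta_\infty$ lies in $X_G(\infty)$ and $X_G(\infty)\subseteq\bar B_{r_\infty}(\zeta_\infty)$ with $r_\infty=\lim r_n\le\frac{\pi}{2}$. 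The remaining possibility $r_n\equiv\frac{\pi}{2}$ --- which together with input (b) I regard as the main obstacle --- I would reduce to the Clifford-translation analysis flagged above.
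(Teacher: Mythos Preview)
Your treatment of (2), (3), (5) is essentially the same as the paper's, and your reduction of (1) via the normal core is in fact \emph{cleaner} than the paper's argument: the paper routes (1) through (4) by writing $G$ as a nested union of finitely generated subgroups, whereas your observation that the normal core $N=\bigcap_g gG_0g^{-1}$ has finite index, that some power $z^k$ is a parabolic in $Z(G_0)\cap N\subseteq Z(N)$, and that $Z(N)\trianglelefteq G$ is grounded, dispatches (1) directly via (2).

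The genuine gap is in (4). Your Cauchy-sequence argument for the circumcenters $\zeta_n$ works only when the circumradii are strictly below $\frac{\pi}{2}$ (and even then the displayed inequality $d(\zeta_n,\zeta_m)^2\le 2(r_n^2-r_m^2)$ is the CAT(0) formula, not the CAT(1) one; the correct spherical comparison still gives a Cauchy sequence, but the estimate must be rewritten). The case you flag as ``the main obstacle'', namely $r_n\equiv\frac{\pi}{2}$, is not resolved: the appeal to a ``Clifford-translation analysis'' is a hope, not an argument, and in fact there is no reason the individual $G_n$ should have any relation to Clifford translations or flat factors. The paper handles (4) by invoking a lemma of Caprace--Lytchak~\cite[Lemma~5.2]{CapLyt10}: in a \emph{finite-dimensional} complete CAT(1) space, any nested sequence of nonempty closed convex subsets of radius $\le\frac{\pi}{2}$ has nonempty intersection. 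Finite-dimensionality of $X(\infty)$ follows from finite-dimensionality of $X$ via~\cite[Proposition~2.1]{CapLyt10}. This is exactly the missing ingredient; without some such hypothesis the intersection can be empty (think of nested hemispheres in an infinite-dimensional round sphere). Once $X_G(\infty)=\bigcap_n X_{G_n}(\infty)$ is known to be nonempty, it is automatically closed, convex, and of radius $\le\frac{\pi}{2}$, so $G$ is grounded.

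A related remark applies to your ``center lemma'' (a): when the radius equals $\frac{\pi}{2}$ the circumcenter need not be unique, so one really does need a canonical (hence isometry-invariant) choice of center. The paper simply cites the ``center of gravity'' construction from~\cite[Appendix~3]{BGS} and~\cite[Section~4.4]{Ebe-book}; Balser--Lytchak is a legitimate alternative, but you should state precisely which result you are using rather than leaving it as an intention.
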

\begin{proof}
Schroeder proved in~\cite[Appendix 3]{BGS} the key fact 
that an abelian group that contains a parabolic is grounded,
see also~\cite[Section 4.4]{Ebe-book}. These references
also contain a proof of part (2) which we outline below. 
If $N$ is a grounded normal subgroup of $G$, then
$G$ stabilizes $X_N(\infty)$, and
hence fixes its center of gravity, which proves
that $X_G(\infty)$ is nonempty. It is clear that
$X_G(\infty)\subseteq X_N(\infty)$ and that $X_G(\infty)$
is closed in the cone topology on $X(\infty)$.
Since the identity map of $X(\infty)$ from the Tits metric 
topology to the cone the topology is continuous, $X_G(\infty)$
is closed in the Tits metric topology. 
Finally, convexity of $X_G(\infty)$ is implied
by the fact that the $\frac{\pi}{2}$-balls are uniquely geodesic,
so if an element of $G$ fixes the endpoints of a geodesic segment,
it fixes the segment pointwise. 

To prove (4) let $G=\bigcup_i G_i$ where 
$G_1\le \dots\le G_i\le \dots$ is a countable family of 
grounded subgroups of $\Iso(X)$ (since $G$ is countable,
it is not the union of uncountable family of distinct groups). 
Note that $X_G(\infty)=\bigcap_i X_{G_i}(\infty)$.
By assumptions $X_{G_i}(\infty)$ is a nested family
of closed convex subsets of radius $\le\frac{\pi}{2}$.
It is shown by Caprace-Lytchak~\cite[Lemma 5.2]{CapLyt10}
that any such family must have a nonempty intersection
provided the ambient metric space is finite dimensional,
which follows from~\cite[Proposition 2.1]{CapLyt10}
because $X$ is finite dimensional. 
Thus $X_G(\infty)$ is a nonempty,
closed, convex subset of radius 
$\le\frac{\pi}{2}$, so $G$ is grounded.

Now we are ready to prove (1). The conclusion would
be immediate if $G_0$ were normal in $G$ because
by the proof of Theorem~\ref{thm: into-center}
$Z(G_0)$ contains a parabolic element, so
$G_0$, and hence $G$ is grounded by the above-mentioned result
of Schroeder. 
In general, since $G$ is countable,
we can realize $G$ as the union of a nested
sequence of finitely generated subgroups $\{H_i\}$
where $H_i\subset H_{i+1}$ for all $i$. We may also
assume that $H_1$ contains a parabolic element
$z\in Z(G_0)$. 
The subgroup $N_i:=\bigcap_{g\in H_i} g(H_i\cap G_0)g^{-1}$
is normal in $H_i$, and moreover, the index of $N_i$
in $H_i$ is finite because $H_i$ is a finitely generated,
and the index of $H_i\cap G_0$ is finite.
Hence $N_i$ contains a power of $z$, which
is a parabolic element in $Z(N_i)$. It follows that
$N_i$ is grounded, and hence so is $H_i$.
Now (4) implies that $G$ is grounded as the union of $H_i$'s.

To prove (3) we first recall a result of 
Burger-Schroeder~\cite{BurSch87} that
an amenable subgroup of $\Iso(X)$ either
stabilizes a flat, or fixes a point of $X(\infty)$.
In the former case the subgroup is 
virtually-$\mathbb Z^k$, which is excluded by assumption.
(If a discrete subgroup $G\le\Iso(X)$ stabilizes a $m$-dimensional flat,
then restricting the $G$-action to the flat gives a
homomorphism $G\to \Iso(\mathbb R^m)$ with
finite kernel and discrete image. By Bieberbach's theorem 
the image of $G$ stabilizes an 
$k$-dimensional flat in $\mathbb R^k$,
which implies that $G$ is 
virtually-$\mathbb Z^k$, see e.g~\cite[Theorem 2.1]{Wil00}).

Thus we may assume that $X_G(\infty)$ is nonempty.
By Lemma~\ref{lem: periodic group} abelian subgroups of $G$ are finitely generated, so
$G$ is virtually polycyclic as
proved by Mal'cev~\cite{Mal51}. 
A polycyclic subgroup of $\Iso(X)$ that contains no parabolics
must be virtually abelian~\cite[Theorem II.7.16]{BriHae}.
Thus $G$ contains a parabolic element. If $P$ is the cyclic subgroup of $G$ generated by a parabolic element, then 
$X_G(\infty)\subseteq X_P(\infty)$,
where $X_P(\infty)$ has radius $\le\frac{\pi}{2}$
because $P$ is grounded.

To prove (5) consider an normal abelian subgroup 
$A$ of $G$ that contains an infinite $G$-conjugacy class.
Then $A$ contains a parabolic 
element~\cite[Lemma II.7.17(2)]{BriHae}, which
implies that $A$, and hence $G$ are grounded.
\end{proof}

\begin{cor}
\label{cor: elem amen}
Let $G$ be a finitely generated,
torsion-free, discrete subgroup of $\Iso(X)$.
If $G$ has a nontrivial, normal, elementary amenable subgroup, then
either $G$ is grounded, or $G$ has a nontrivial,
finitely generated, 
abelian, normal subgroup that is  
a virtual direct factor of $G$.
\end{cor}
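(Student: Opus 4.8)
The plan is to run a dichotomy according to whether the given normal subgroup is grounded, feeding the structure theory of Sections~\ref{sec: center}--\ref{sec: grounded} into Theorem~\ref{thm: intro-grounded}. Let $N$ be a nontrivial normal elementary amenable subgroup of $G$. Since $G$ is torsion-free and discrete it acts freely and properly discontinuously on $X$, so $X/G$ is an aspherical manifold and $\cd(N)\le\cd(G)\le\dim X<\infty$; hence $N$ is virtually solvable by~\cite{HilLin92}, and it is discrete and infinite (being nontrivial and torsion-free). If $N$ is grounded, then $G$ is grounded by Theorem~\ref{thm: intro-grounded}(2), and we are done, so from now on assume $N$ is not grounded.

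Then Theorem~\ref{thm: intro-grounded}(3) forces $N$ to be virtually-$\mathbb Z^k$ for some $k\ge 1$; in particular $N$ is finitely generated and polycyclic-by-finite. Let $A:=\mathrm{Fitt}(N)$, the Fitting subgroup: it is nilpotent, characteristic in $N$, hence normal in $G$; it is finitely generated (polycyclic-by-finite groups are Noetherian); and it is nontrivial, since the core in $N$ of a finite-index free abelian subgroup of $N$ is a nontrivial abelian normal subgroup of $N$ and so lies in $A$. Being also torsion-free and virtually abelian, $A$ is in fact abelian: a short computation with the upper central series shows that a finitely generated torsion-free virtually abelian nilpotent group is abelian (if such a group were nonabelian, choose $x$ in its second center with $x\notin Z(A)$ and $h$ with $1\ne z:=[x,h]\in Z(A)$; then for a suitable $p\ge 1$ the elements $x^p,h^p$ lie in a fixed finite-index abelian subgroup, yet $[x^p,h^p]=z^{p^2}\ne 1$ by torsion-freeness, a contradiction). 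Thus $A\cong\mathbb Z^m$ with $m\ge 1$. Finally $A$ contains no parabolic isometry, for otherwise $A$ would be grounded by Schroeder's theorem that an abelian group containing a parabolic is grounded~\cite[Appendix~3]{BGS}, and then $N$ would be grounded by Theorem~\ref{thm: intro-grounded}(2); since $G$ acts freely, the nontrivial elements of $A$ are therefore axial, so $A$ acts by semisimple isometries.

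It remains to see that the finitely generated abelian normal subgroup $A$ is a virtual direct factor of $G$. As $A\cong\mathbb Z^m$ acts properly by semisimple isometries, the Flat Torus Theorem~\cite[Theorem~II.7.1]{BriHae} provides a nonempty convex set $\Min(A)$ splitting isometrically as $Y\times\mathbb R^m$ with $A$ acting trivially on $Y$ and by a cocompact lattice of translations on $\mathbb R^m$; because $A$ is normal in $G$, this $\Min(A)$ is $G$-invariant and $G$ preserves the splitting, and the linear part of the induced homomorphism $G\to\Iso(\mathbb R^m)$ coincides with the conjugation action of $G$ on $A$ under the identification of $A$ with its lattice of translations. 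Hence that linear part has image in the finite group $\mathrm{Aut}(A)\cap O(m)$, so the centralizer $G_0:=C_G(A)$ has finite index in $G$; as $A$ is a finitely generated free abelian subgroup of the center of the finitely generated group $G_0$ acting by semisimple isometries, exactly as in the proof of Theorem~\ref{thm: into-center}(3) the Flat Torus Theorem then yields a finite-index subgroup of $G_0$ in which $A$ is a direct factor. This exhibits $A$ as a nontrivial, finitely generated, abelian, normal virtual direct factor of $G$. I expect the last paragraph to be the main obstacle: the grounded alternative is immediate once $N$ has been identified as virtually-$\mathbb Z^k$, whereas extracting the virtual splitting requires one to verify that normality of $A$ makes $G$ respect the product structure of $\Min(A)$ with finite linear holonomy before the central version of the Flat Torus Theorem can be invoked.
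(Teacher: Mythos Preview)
Your argument is correct, but it is considerably more elaborate than the paper's. The paper's proof is three lines: since $G$ is torsion-free discrete it has finite cohomological dimension, so by \cite[Corollary~2]{HilLin92} the existence of a nontrivial normal elementary amenable subgroup already produces a nontrivial \emph{abelian} normal subgroup $A\le G$; if $A$ contains a parabolic then $G$ is grounded, and otherwise Lemma~\ref{lem: periodic group} gives $A\cong\mathbb Z^k$ and \cite[Theorem~II.7.1(5)]{BriHae} makes $A$ a virtual direct factor of $G$. You instead run the dichotomy one level higher, on whether $N$ itself is grounded, and when it is not you invoke Theorem~\ref{thm: intro-grounded}(3) to force $N$ to be virtually-$\mathbb Z^k$ and then manufacture the abelian normal subgroup by hand as $\mathrm{Fitt}(N)$, checking that torsion-free virtually abelian nilpotent groups are abelian. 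This buys you independence from the specific corollary of Hillman--Linnell (you only use their main theorem that elementary amenable of finite $\cd$ is virtually solvable), at the cost of the Fitting-subgroup and commutator computations. Your final paragraph is likewise a partial re-derivation of \cite[Theorem~II.7.1(5)]{BriHae}: once $A\cong\mathbb Z^m$ is normal in the finitely generated group $G$ and acts by semisimple isometries, that theorem already gives the virtual direct factor conclusion without passing through $G_0=C_G(A)$.
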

\begin{proof}
Since $G$ is torsion-free and discrete, it
has finite cohomological dimension, so 
by~\cite[Corollary 2]{HilLin92}
$G$ has a nontrivial, abelian, normal subgroup $A$.
If $A$ contains a parabolic element, then $G$ is grounded. 
Otherwise, by Lemma~\ref{lem: periodic group} 
$A$ is isomorphic to some $\mathbb Z^k$, $k>0$, and 
then~\cite[Theorem II.7.1(5)]{BriHae} implies that $A$ is 
a virtual direct factor of $G$. 
\end{proof}

\begin{rmk} 
If $G$ is an amenable, discrete, torsion-free 
isometry group of $X$
that is not virtually-$\mathbb Z^k$ for any $k$,
then $G$ fixes a point at infinity~\cite{BurSch87}, 
yet I do not know whether it is always grounded.
A counterexample, if it exists, would have to be
an amenable group of finite cohomological dimension
that is not elementary amenable, for if $G$ were
elementary amenable, it would be
virtually solvable~\cite{HilLin92},
and hence grounded by 
Theorem~\ref{thm: intro-grounded}(3). 
\end{rmk}

In view of Remark~\ref{rmk: no hom into R} 
one wants to produce grounded groups with
no nontrivial homomorphisms into $\mathbb R$.

\begin{ex} 
\label{ex: dekimpe 3d}
A complete list
of $3$-dimensional closed 
infranilmanifolds, as found in~\cite{DIKL-3dnil},
contains an infinite family of manifolds 
whose fundamental group has finite abelianization.
Namely, in the family 2 on~\cite[page 156]{DIKL-3dnil},
also listed on~\cite[page 160]{Dek-book}, let
$k_1$ be a positive even integer, and set
$k_2=0=k_3$, $k_4=1$. Taking products
yields examples in each dimension
divisible by $3$. 
This example does not really illustrate the power
of Theorem~\ref{thm: intro-grounded}(3)
because it is already covered by Theorem~\ref{thm: into-center},
yet it can be used as a building block in other examples below.
For that purpose we note that any group in the 
above family surjects onto the infinite dihedral group.
\end{ex}

The following virtually solvable group 
has finite abelianization,
finite cohomological dimension, 
and is not virtually polycyclic, 
so it satisfies assumptions of
Theorem~\ref{thm: intro-grounded}(3).

\begin{ex} 
\label{ex: Kapovich ex}
(Misha Kapovich)
Let $H$ be a virtually polycyclic group of finite cohomological dimension such that $H^{\ab}$ is a finite group that
has ${\mathbb Z}_m$ among its cyclic factors for even $m$  
(e.g. we could take $H$ to be
the fundamental group of Hantsche-Wendt's flat manifold, or 
any group in Example~\ref{ex: dekimpe 3d}). 
Set $B:=\mathbb Q\rtimes{\mathbb Z}$, where ${\mathbb Z}$ acts on 
$\mathbb Q$  by $q\to -q$ via the surjection onto ${\mathbb Z}_2$. 
Set $G:=B\rtimes H$, where
$H$ acts on $B$ so that the action on $\mathbb Q$ is trivial and the action on ${\mathbb Z}$ is by $m\to -m$ (which is indeed an action as $\mathbb Z$ is abelian). By construction $G^{\ab}$ is finite, and $G$ has finite cohomological dimension because its semidirect factors have this property, 
e.g. $\mathbb Q$ is locally cyclic. 
Finally, $G$ is not virtually polycyclic as it contains 
$\mathbb Q$, and abelian 
subgroups of virtually polycyclic groups are 
finitely generated.
\end{ex}

In the above example $G$ is not of type {\it F}
(because it has $\mathbb Q$ as a virtual direct factor).
The following is an example of a torsion-free polycyclic
group with finite abelianization which is not 
virtually nilpotent, and like every virtually
polycyclic group,
it is the fundamental group of a closed aspherical 
manifold~\cite{AusJoh}.

\begin{ex} 
\label{ex: dekimpe poly}
(Karel Dekimpe)
Let $H$ be any virtually polycyclic group
with finite abelianization such that $H$ surjects
onto the infinite dihedral group 
$D_\infty=\langle a,b| b^2=1, aba=b\rangle$.
Let 
\begin{equation*}
\a=\left[
\begin{array}{cc}
Q & 0 \\
0 & Q^{-1} 
\end{array}\right]
\quad\quad\quad
\b=\left[
\begin{array}{cc}
0 & I_n \\
I_n & 0 
\end{array}\right]
\end{equation*}
be matrices on $GL_{2n}(\mathbb Z)$, where 
$Q\in GL_{n}(\mathbb Z)$. Since $\b^2=I_{2n}$ and
$\a\b\a=\b$, the map that sends
$a$, $b$ to $\a$, $\b$, respectively,
extends to a homomorphism
$D_\infty\to\ GL_{2n}(\mathbb Z)$.
It remains to choose $Q$ so that 
the corresponding semidirect product
$G:=\mathbb Z^4\rtimes H$ has finite abelianization,
and is not virtually nilpotent. 
The abelianization of the semidirect product
$A\rtimes B$ is $(A^\ab)_B\times B^\ab$, where
$(A^\ab)_B$ is the coinvariants for the $B$-action 
on $A^\ab$, i.e. the quotient of $A$ by the subgroup 
generated by the set 
$\{z-\overline{\phi}_b(z)| z\in A^{\ab}, b\in B\}$,
where $\bar{\phi}$ is $\phi$ followed by the projection
$A\to A^{\ab}$. Take $Q\in SL_2(\mathbb R)$ with
$\mathrm{trace}(Q)\neq 2$ so that $A-I_2$ has nonzero determinant.
By considering where $\a$, $\b$ map the standard generators
of $\mathbb Z^4$ one gets finiteness of the coivariants. 
To show that $G$ is not virtually nilpotent, we assume in addition that
$|\mathrm{trace}(Q)|> 2$, and consider the subgroup $H_0\le H$
that is the preimage of the subgroup 
$\langle a\rangle\le D_\infty$, and
note the $\mathbb Z^2$-summand
of $\mathbb Z^4$ spanned by the first two basis
vectors is $H_0$-invariant. Thus $G$ has a subgroup
$\mathbb Z^2\rtimes H_0$, which
surjects onto $\mathbb Z^2\rtimes \langle a\rangle$.
With our choice of $Q$ the latter group is a lattice in SOL, 
a $3$-dimensional solvable Lie group. Since lattices in
SOL are not virtually nilpotent, neither is $G$, as being virtually nilpotent
is inherited by subgroups and quotients.
\end{ex}

The following gives a large family of grounded groups
of the form $\mathbb Z^k\rtimes K$, which have
finite abelianization and type {\it F\,} if $K$ does.
This applies e.g. if $K$ is a torsion-free, finite index subgroup
of $GL_k(\mathbb Z)$ where $k\ge 3$ and the semidirect
product is defined via its standard action on $\mathbb Z^k$.

\begin{prop}
\label{prop: grounded semidirect}\rm
Theorem~\ref{thm: intro-grounded}(5)\it\
applies to every semidirect product
$G=\mathbb Z^k\rtimes K$ defined by a homomorphism
$\phi\co K\to GL_k(\mathbb Z)$ whose image contains an infinite order element. 
\end{prop}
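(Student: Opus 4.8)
The plan is to verify the group-theoretic hypothesis of Theorem~\ref{thm: intro-grounded}(5) directly: namely that the normal abelian subgroup $A=\mathbb Z^k$ of $G$ (the kernel of the projection $G\to K$) contains an infinite $G$-conjugacy class.

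First I would describe the $G$-conjugacy class inside $A$ of a vector $v\in\mathbb Z^k$. Since $A$ is abelian, $A$ acts trivially on itself by conjugation, and for $\kappa\in K$ one has, writing elements of $G=\mathbb Z^k\rtimes_\phi K$ as pairs, $(0,\kappa)(v,1)(0,\kappa)^{-1}=(\phi(\kappa)v,1)$; combining these two observations, conjugation of $v$ by an arbitrary $(w,\kappa)\in G$ yields $\phi(\kappa)v$. Hence the intersection of the $G$-conjugacy class of $v$ with $A$ is exactly the orbit $\phi(K)\cdot v$ of $v$ under the $\phi(K)$-action on $\mathbb Z^k$, and it is infinite precisely when that orbit is infinite.

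Second I would exhibit a $v\in\mathbb Z^k$ with infinite $\phi(K)$-orbit. Let $M\in\phi(K)$ have infinite order, as provided by hypothesis; it suffices to find $v$ whose cyclic orbit $\{M^nv:n\in\mathbb Z\}$ is infinite, since this set lies in $\phi(K)\cdot v$. If instead every $v\in\mathbb Z^k$ had finite $\langle M\rangle$-orbit, then (as $M$ is invertible) every $v$ would be fixed by some positive power of $M$; applying this to a finite generating set $e_1,\dots,e_k$ of $\mathbb Z^k$ and taking $N$ to be a common multiple of the resulting periods, we would get $M^Ne_i=e_i$ for all $i$, i.e.\ $M^N=I$, contradicting that $M$ has infinite order. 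So some lattice vector has infinite $\langle M\rangle$-orbit, hence infinite $\phi(K)$-orbit, which by the previous step is an infinite $G$-conjugacy class contained in the normal abelian subgroup $\mathbb Z^k$ --- exactly what Theorem~\ref{thm: intro-grounded}(5) asks for.

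I do not expect a genuine obstacle: the proof is a bookkeeping identification of conjugacy classes with $\phi(K)$-orbits plus a pigeonhole argument. The only point that deserves a moment's care is the final implication, that an automorphism of a finitely generated group under which every element is periodic must itself have finite order; this becomes trivial once one restricts attention to a finite generating set.
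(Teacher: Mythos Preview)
Your proof is correct and follows essentially the same approach as the paper: both identify the $G$-conjugacy class of $v\in\mathbb Z^k$ with its $\phi(K)$-orbit, then use the pigeonhole argument on the standard basis vectors (taking a common multiple of periods) to show that an infinite-order $M\in\phi(K)$ must move some vector aperiodically. The only cosmetic difference is that you spell out the conjugation computation in the semidirect product more explicitly than the paper does.
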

\begin{proof}
If $h\in H$ is mapped to an infinite order element
$\phi_h$, then there is $v\in\mathbb Z^k$ such that
$\phi_{h^m}(v)\neq v$ for $m\neq 0$ (e.g. 
$v$ can be chosen as a vector of the standard basis $\{e_i\}$, 
for otherwise for each $i$ we have
$\phi_{h^{l_i}}(e_i)=e_i$ for some $l_i$, so if
$l=l_1\dots l_k$, then by linearity
$\phi_{h^{l}}$ is the identity, contradicting the
infinite order assumption). 
Thus $h^m v h^{-m}=\phi_{h^m}(v)\neq v$ in $G$ 
for all $m\neq 0$ so the conjugacy class of $a$
intersects $\mathbb Z^k$ along an infinite subset, and
Theorem~\ref{thm: intro-grounded}(5) applies.
\end{proof}

\begin{prop}
\label{prop: Putman ex}
In the notations of {\rm
Proposition~\ref{prop: grounded semidirect}}
if $\phi(K)$ is a finite index subgroup of 
$GL_k(\mathbb Z)$, and if $K^\ab$ is finite,
then $G^\ab$ is finite. In particular,
$G^\ab$ is finite if $\phi$ is injective and $n\ge 3$. 
\end{prop}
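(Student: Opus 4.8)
The plan is to reduce the assertion to a finiteness statement about the coinvariants of the standard module $\mathbb{Z}^k$, which I would then settle by an elementary computation with elementary matrices. Recall from the formula quoted in the proof of Example~\ref{ex: dekimpe poly} that the abelianization of a semidirect product $A\rtimes B$ is $(A^\ab)_B\times B^\ab$; applied to $G=\mathbb{Z}^k\rtimes K$, and noting that the $K$-action on $\mathbb{Z}^k$ factors through $\phi$, this gives a natural isomorphism
\[
G^\ab\;\cong\;(\mathbb{Z}^k)_{\phi(K)}\times K^\ab ,
\]
where $(\mathbb{Z}^k)_{\phi(K)}=\mathbb{Z}^k/W$ and $W$ is the subgroup generated by all $v-\gamma v$ with $v\in\mathbb{Z}^k$, $\gamma\in\phi(K)$. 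Since $K^\ab$ is finite by hypothesis, it suffices to show $(\mathbb{Z}^k)_{\phi(K)}$ is finite; note $k\ge 2$ because, by the standing assumption of Proposition~\ref{prop: grounded semidirect}, $\phi(K)$ contains an element of infinite order and is in particular infinite.

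To see $\mathbb{Z}^k/W$ is finite I would place a finite-index subgroup of $\mathbb{Z}^k$ inside $W$. For $i\neq j$ let $E_{ij}$ be the matrix unit (its only nonzero entry is a $1$ in position $(i,j)$) and set $t_{ij}:=I+E_{ij}\in GL_k(\mathbb{Z})$; since $E_{ij}^2=0$ one has $t_{ij}^{\,m}=I+mE_{ij}$ for all $m$. Because $\phi(K)$ has finite index in $GL_k(\mathbb{Z})$, a pigeonhole argument on the finitely many cosets of $\phi(K)$ produces an integer $m_{ij}\ge 1$ with $t_{ij}^{\,m_{ij}}\in\phi(K)$. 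Taking the generator $v-\gamma v$ of $W$ with $v=e_j$ and $\gamma=t_{ij}^{\,m_{ij}}$ gives $e_j-(e_j+m_{ij}e_i)=-m_{ij}e_i\in W$, so $m_{ij}e_i\in W$. Choosing for each $i$ some $j=j(i)\neq i$ (possible as $k\ge 2$) and putting $m:=\prod_i m_{i,j(i)}$, we obtain $m\,\mathbb{Z}^k\subseteq W$, so $(\mathbb{Z}^k)_{\phi(K)}$ is a quotient of $(\mathbb{Z}/m\mathbb{Z})^k$ and hence finite; combined with the previous paragraph, $G^\ab$ is finite.

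For the last assertion, suppose $\phi$ is injective and $k\ge 3$, so $K\cong\phi(K)$ is a finite-index subgroup of $GL_k(\mathbb{Z})$. Since $SL_k(\mathbb{Z})$ has Kazhdan's property (T) for $k\ge 3$ and property (T) passes to finite-index overgroups and to finite-index subgroups~\cite{BHV}, the group $K$ has property (T), whence $K^\ab$ is finite; the first part then applies.

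The argument is elementary throughout. The only mild subtlety is that the pigeonhole step yields merely \emph{some} positive power of each $t_{ij}$ in $\phi(K)$ rather than a uniform exponent, which is harmless since we need only finiteness of the coinvariant group. The one genuinely nonelementary ingredient is property (T) of the higher-rank arithmetic group used in the final clause; it has no elementary substitute and is precisely why the hypothesis $k\ge 3$ is needed there (for $k=2$ a finite-index subgroup of $GL_k(\mathbb{Z})$ can be virtually free and so have infinite abelianization).
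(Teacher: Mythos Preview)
Your proof is correct and follows essentially the same route as the paper's: both reduce to finiteness of the coinvariants $(\mathbb Z^k)_{\phi(K)}$ via the semidirect-product abelianization formula, both exhibit $m e_i\in W$ by applying a power of an elementary transvection in $\phi(K)$ to $e_j$, and both invoke property~(T) for the final clause. The only cosmetic differences are that the paper obtains a single uniform exponent $m$ by first passing to a normal finite-index subgroup of $GL_k(\mathbb Z)$ inside $\phi(K)$ (whereas you pigeonhole each $t_{ij}$ separately and take a product), and the paper records in passing an alternative argument for finiteness of the coinvariants via the Borel density theorem.
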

\begin{proof}
As in Example~\ref{ex: dekimpe poly},
the abelianization of 
$A\rtimes B$ is $(A^\ab)_B\times B^\ab$.
Finiteness of $(\mathbb Z^k)_K$ follows via the 
elementary computation below shown to 
me by Wilberd van der Kallen.
For $i\neq j$, let $E_{ij}^m\in GL_k(\mathbb Z)$ be 
the matrix with $1$'s on the diagonal, $m$
as $ij$-entry, and zero elsewhere. 
For every $m$, and any basis vector
$e_l\in\mathbb Z^k$ one can be easily find
$v\in\mathbb Z^k$ and $E_{ij}^m$ such that 
$me_i=v-E_{ij}^m(v)$. Thus it remains
to find $m$ such that $\phi(K)$ contains
every matrix $E_{ij}^m$. Let $C$ be a
finite index normal subgroup of $GL_k(\mathbb Z)$
that lies in $\phi(K)$.
Since the $m$th power of 
$E_{ij}^1$ equals $E_{ij}^m$,
it suffices to pick $m$  divisible
by the index of $C$ in $GL_k(\mathbb Z)$.

An elegant way to prove finiteness of $(\mathbb Z^k)_K$
was explained to me by Andy Putman:
One needs to show that $(\mathbb R^k)_K=0$,
and since $K$ is a lattice in $SL_k(\mathbb R)$,
the Borel density theorem shows that $\mathbb R^k$
is an irreducible $K$-module,
so the submodule generated by
$\{v-\phi_g(v)| v\in \mathbb R^k,\, g\in K\}$
cannot be proper.

Finally, if $\phi$ is injective and $n\ge 3$,
then $K$ has Kazhdan property (T), and hence $K^\ab$
is finite. 
\end{proof}

\begin{rmk}
The property of having finite abelianization
is inherited by amalgamated products and extensions:
the former is obvious and the latter follows
from right exactness of the abelianization functor.
\end{rmk}

\begin{ex}
Suppose $A$ is a normal abelian subgroup
of groups $H$ and $F$, so that
$A$ is also normal in $G:=H\ast_A F$.
Suppose $H$ and $A$ satisfy an 
algebraic condition that forces the existence
of a parabolic in $A$ for every
isometric, properly discontinuous $H$-action
on a Hadamard manifold, which happens e.g. if 
$H$ is as in Theorem~\ref{thm: into-center}, or if
$H=\mathbb Z^k\rtimes K$ as in 
Proposition~\ref{prop: Putman ex}.
Then $G$ is grounded, and the above
discussion allows one to construct many such
groups that have finite abelianization and type {\it F}.
\end{ex}

\section{Invariant Busemann functions on visibility manifolds}
\label{sec: visib}

Grounded groups are much easier to find among isometry groups
of visibility manifolds.
The main geometric ingredient is supplied by the following.

\begin{lem}
\label{lem: lem-visibi}
If $X$ is visibility, $G$ is 
not virtually cyclic,
and $X_G(\infty)$ is nonempty, then
$G$ fixes a Busemann function, $G$ contains no axial isometry,
$X_G(\infty)$ is a single point, and in particular
$G$ is grounded.  
\end{lem}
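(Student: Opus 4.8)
The plan is to exploit the defining property of a visibility manifold: any two distinct points of $X(\infty)$ are joined by a geodesic. First I would show $X_G(\infty)$ is a single point. Suppose $\xi, \eta \in X_G(\infty)$ are distinct. By visibility there is a geodesic $c$ with endpoints $\xi$ and $\eta$, and since $G$ fixes both endpoints, each $g \in G$ maps $c$ to a geodesic with the same pair of endpoints; in a visibility manifold the geodesic joining two ideal points is unique up to parametrization (distinct geodesics with common endpoints would bound a flat half-strip, which is excluded), so $g$ preserves the image of $c$. Thus $G$ acts on the line $c(\mathbb{R}) \cong \mathbb{R}$ by isometries, giving a homomorphism $G \to \Iso(\mathbb{R})$. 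Because $G$ is discrete the kernel is the stabilizer of $c$ pointwise, and the image is discrete; a discrete subgroup of $\Iso(\mathbb{R})$ is virtually cyclic, and by the Flat Torus / splitting machinery the full group $G$ would then be virtually cyclic, contradicting the hypothesis. (One has to be slightly careful: the kernel must itself be shown virtually trivial or at least that the extension keeps $G$ virtually cyclic — but the kernel fixes the line pointwise and fixes $\xi$, and in a visibility manifold a group fixing a geodesic line is virtually cyclic as well, so the conclusion survives.) Hence $X_G(\infty)$ is a single point $\xi$.

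Next I would rule out axial isometries in $G$. If $g \in G$ were axial with axis $\ell$, its two endpoints $g^+, g^-$ are fixed by $g$; but they need not be fixed by all of $G$. Instead I would argue via the Busemann character of Fact~3.5 (the map $g \mapsto b(gx) - b(x)$ associated to the fixed point $\xi$). Any axial $g$ translates along its axis, and if its axis has $\xi$ as an endpoint then $g$ has nonzero Busemann character; if not, then $g$ moves $\xi$'s horospheres in a way I would contradict using visibility again — more cleanly, an axial element together with the fixed point $\xi$ would produce, by the cross-ratio / convergence behavior in a visibility space, a second fixed point at infinity for a finite-index subgroup (the endpoints of the axis), forcing $X_G(\infty)$ to be bigger than a point once we pass to that subgroup, and grounds of $G_0$ pass up to $G$; since $G$ is not virtually cyclic this is a contradiction. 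So $G$ contains no axial isometry, and therefore (by Remark~3.4 on the Busemann character, or directly since every element fixing $\xi$ with no axial behavior has trivial Busemann character) the character $G \to \mathbb{R}$ is trivial.

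With the character trivial and $\xi$ fixed, Remark~\ref{rmk: no hom into R} gives that $G$ preserves the Busemann function $b$ centered at $\xi$. Finally, $X_G(\infty)$ being a single point of radius $0 < \tfrac{\pi}{2}$ shows $G$ is grounded by definition. I expect the \textbf{main obstacle} to be the second paragraph: cleanly excluding axial isometries without circular reasoning. The delicate point is that an axial $g \in G$ a priori only fixes the two endpoints of its own axis, not points of $X_G(\infty)$; one must leverage that the whole group fixes $\xi$ to show the axis endpoints are also $G$-invariant (e.g. the axis is the unique geodesic from $\xi$ to $g^{\pm}$, or a minimizing set argument à la~\cite[Lemma 7.1]{BGS}), and then derive the virtually-cyclic contradiction — this is where the visibility hypothesis does the real work, since in a general Hadamard manifold axial isometries fixing a common point at infinity coexist happily (e.g. in a flat).
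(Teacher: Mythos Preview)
Your overall strategy matches the paper's, but there are two gaps, one minor and one substantive.

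The minor one is in your first step: the geodesic joining two ideal points of a visibility manifold need \emph{not} be unique. Two parallel geodesics with the same endpoints bound a flat strip (not a half-strip), and visibility excludes flat half-planes, not flat strips. The paper handles this by noting that the union of all geodesics joining $\xi$ and $\eta$ splits as $C\times\mathbb{R}$ with $C$ a \emph{compact} convex set (compactness being what visibility buys). Then $G$ acts on $C\times\mathbb{R}$; some element must translate nontrivially in the $\mathbb{R}$-factor (else $G$ would be finite), so the action is cocompact, $G$ is two-ended, hence virtually cyclic. Your argument is easily repaired along these lines.

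The substantive gap is exactly where you flag it: ruling out axial isometries. Your proposed routes do not close. An axial $g\in G$ does have nonzero Busemann character at $\xi$ (its axis necessarily has $\xi$ as an endpoint, since in a visibility manifold an axial isometry fixes precisely its two axis endpoints), but that alone gives no contradiction. You never identify \emph{which} finite-index subgroup would fix the other endpoint $\alpha$ of the axis, and there is no evident candidate; the suggestions about unique geodesics or $\Min$-sets do not produce one. The paper's key input is a result of Karlsson--Noskov~\cite[Proposition~4]{KarNos04}: in this setting the fixed-point sets at infinity of any two infinite-order isometries are either disjoint or equal. Since every element of $G$ fixes $\beta$, all infinite-order elements share the \emph{same} fixed set. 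Thus either all are parabolic with unique fixed point $\beta$, or all are axial with common pair $\{\alpha,\beta\}$. In the latter case (assuming no parabolics) conjugating an axial element by an elliptic $e$ yields another axial element, whose second fixed point $e(\alpha)$ must again equal $\alpha$; hence every element of $G$ fixes $\alpha$, contradicting $X_G(\infty)=\{\beta\}$. Without this disjoint-or-equal fact, or an equivalent dynamical argument, step~2 does not go through.
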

\begin{proof}
If $X_G(\infty)$ contains two distinct points,
then these points can be joined by a geodesic
as $X$ is visibility. The union of all geodesics
that join the points splits as $C\times\mathbb R$,
where $C$ is a compact convex subset. 
There exists an element of $G$ that acts as a 
nontrivial translation on the $\mathbb R$
factor (for if $G$-action on the $\mathbb R$-factor
were trivial, compactness of $C$ and proper discontinuity
of the action on $C\times\mathbb R$
would imply that finiteness of the group $G$).
Thus $G$-action on $C\times\mathbb R$ is cocompact,
so $G$ is two ended and hence virtually-$\mathbb Z$,
contradicting the assumptions.
Thus $X_G(\infty)$ is a single point, to be denoted 
$\beta$.

First we deal with the case when $G$ is periodic.
Any elliptic element $g\in G$
pointwise fixes the ray that connects
$\beta$ with any point of $X$ fixed by $g$, and hence
$g$ preserves the Busemann function for that ray, and 
hence every Busemann function for $\beta$ as they differ by a constant.

A key property of a parabolic
isometry of a visibility manifold is that
is has one fixed point at 
infinity, and similarly, any axial isometry
fixes precisely two points at 
infinity~\cite[Lemma 6.8]{BGS}.

It follows from~\cite[Proposition 4, page 8]{KarNos04}
that the fixed point sets of any two infinite order
isometries are either disjoint or equal. Since
any element of $G$ fixes $\beta$, all infinite order
isometries in $G$  have the same fixed point set.

Suppose $G$ contains an axial isometry
and no parabolic isometries.
Let $\a\in X(\infty)-\{\beta\}$ such that
each axial isometry in $G$ fixes $\a$ and $\b$.
If $e\in G$ is elliptic and $h\in G$
is axial, then $ehe^{-1}$ is axial
with fixed points $e(\beta )=\beta$ and $e(\alpha )$, so 
$e(\alpha )=\alpha $. This contradicts
that above conclusion that $X_G(\infty)=\{\beta\}$.

Finally, suppose $G$ that contains a parabolic.
Then all infinite order elements of $G$ are parabolic
with unique fixed point $\beta$. Therefore, they fix a 
Busemann function at $\beta$. By the same
argument as above, any elliptic element in $G$
also fixes the Busemann function. 
\end{proof}

In~\cite{KarNos04} Karlsson-Noskov studied isometry groups
of visibility spaces with nonempty $X_G(\infty)$.
To state their result we call
a group $G$ {\it no-$F_2$-connected\,} if
$G$ is generated by a set $S$ of infinite order
elements such that
the following graph is connected: the vertices are
elements of $S$, and two vertices $s_1, s_2\in S$ are joined by an edge
if and only if the subgroup generated by $s_1, s_2$ 
does not contain a nonabelian free subgroup.

By~\cite[Theorem 2]{KarNos04} and 
Proposition~\ref{lem: lem-visibi} if $G$ is a
no-$F_2$-connected, not virtually-cyclic,
discrete isometry group of a visibility manifold, 
then $G$ is grounded, and it 
fixes a Busemann function.  

The class of no-$F_2$-connected groups trivially
contains each torsion-free group without free nonabelian
subgroups. Also the product of any two nontrivial, 
torsion-free groups is no-$F_2$-connected
(here $S$ is the set of all elements in the product 
for which one of the coordinates equals the identity). 
More examples of no-$F_2$-connected groups
can be found in~\cite[Section 8]{KarNos04}. 

We conclude that if $X$ visibility, then Part (3) of
Theorem~\ref{thm: intro-grounded}
can be improved to ``$G$ is amenable and not
virtually cyclic'' or by ``$G$ does not contains a nonabelian
free subgroup and is generated by the set of its
infinite order elements'', and similarly, Part (5) 
can be improved to ``a normal abelian subgroup of 
$G$ is not cyclic''.

\section{Manifolds with $\mathbb R$-factors}
\label{sec: with R-factors}

In this section we discuss when an open manifold is homeomorphic
to the product of $\mathbb R$ and another manifold.
For background on ends of manifolds
we refer to~\cite{Sie-thesis, Geo-book, GuiTinII}.

Guilbault showed in~\cite{Gui-prod-R07} that if $W$ is an 
open manifold that is homotopy equivalent to a finite complex,
then $W\times\mathbb R$ has stable fundamental group at infinity
and zero Siebenmann's end obstruction, so that if $\dim(W)\ge 5$,
then $W\times\mathbb R$ is diffeomorphic to the interior of a compact manifold.
If the complex has codimension $\ge 3$ in $W\times\mathbb R$,
one can say more:

\begin{thm}\label{thm: collar reg nbhd}
Let $W$ be an open $m$-manifold that is homotopy equivalent
to a finite complex of dimension $k\le m-2$.
If $m\neq 3$, then $W\times \mathbb R$ 
is diffeomorphic to the interior of a regular neighborhood
of $k$-dimensional subcomplex.
\end{thm}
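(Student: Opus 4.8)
The plan is to combine Guilbault's theorem with the Siebenmann/Wall theory of regular neighborhoods, exactly as in the statement of Theorem~\ref{thm-intro: reg nbhd} but keeping track of the dimension gap. By~\cite{Gui-prod-R07}, since $W$ is homotopy equivalent to a finite complex, $W\times\mathbb R$ has stable fundamental group at infinity, zero Siebenmann end obstruction, and (being a product with $\mathbb R$) a pseudo-collar structure; when $m+1=\dim(W\times\mathbb R)\ge 6$ this gives that $W\times\mathbb R$ is the interior of a compact manifold $M$ with boundary. The case $m+1\le 5$, i.e. $m\le 4$ with $m\neq 3$, must be handled separately: for $m\le 2$ the statement is elementary (a finite complex of dimension $\le m-2\le 0$ is a point, $W$ is contractible, hence $W\cong\mathbb R^m$ and $W\times\mathbb R\cong\mathbb R^{m+1}$, a regular neighborhood of a point), and for $m=4$ one invokes the $5$-dimensional case of Guilbault's result together with the fact that a regular neighborhood of a $\le 2$-complex in a $5$-manifold is governed by the $2$-dimensional range where the relevant surgery/handle arguments still apply.

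Next I would upgrade ``interior of a compact manifold'' to ``interior of a regular neighborhood of a $k$-complex.'' The homotopy equivalence $W\times\mathbb R\simeq W\simeq L$, with $L$ a finite $k$-complex, composes with a homotopy equivalence $M\simeq W\times\mathbb R$ to give a map $L\to M$; by simplicial approximation and general position, since $2k+1\le 2(m-2)+1 = 2m-3 < m+1$ exactly when $m>4$, one can homotope $L$ to an embedded subcomplex $L'\hookrightarrow \mathrm{Int}(M)$ of codimension $(m+1)-k\ge 3$. The inclusion $L'\hookrightarrow M$ is then a homotopy equivalence between a finite complex and a compact manifold with boundary, and the codimension is $\ge 3$, so the key point is that $M$ is (PL or smoothly) a regular neighborhood of $L'$. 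Here I would cite the engulfing/regular neighborhood recognition results of Siebenmann~\cite{Sie-collar} (see also~\cite[Corollary 11.3.4]{Wal-book}): a compact manifold $M^{m+1}$, $m+1\ge 6$, that is homotopy equivalent rel nothing to a subcomplex $L'$ of codimension $\ge 3$ and has the ``right'' behavior at the boundary is a regular neighborhood of $L'$; the product structure $M\cong$ (interior $=W\times\mathbb R$) and the fact that the end is collared supply the boundary hypothesis. One then passes from the PL regular neighborhood to the smooth one via uniqueness of smoothings of regular neighborhoods of codimension $\ge 3$.

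The main obstacle I expect is the low-dimensional bookkeeping, specifically the $m=4$ case, where $\dim(W\times\mathbb R)=5$ and one cannot freely apply high-dimensional engulfing; one has to check that a regular neighborhood of a $\le 2$-dimensional complex in a $5$-manifold is still detected by the homotopy-theoretic data, using that codimension $\ge 3$ forces $\pi_1$ of the complement to behave well and that $2$-complexes embed and engulf in dimension $5$ by general position. A secondary technical point is verifying that the subcomplex $L'$ produced by general position can be taken to be a genuine subcomplex of a triangulation of $M$ (so that ``regular neighborhood'' is literally defined), rather than merely a tame embedded polyhedron; this is standard but needs the codimension $\ge 3$ hypothesis to collapse any framing ambiguity. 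Once these are in place, Theorem~\ref{thm: collar reg nbhd} follows, and Theorem~\ref{thm-intro: reg nbhd} is the special case $m=n-1\ge 4$, $k\le n-3$.
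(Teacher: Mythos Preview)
Your outline has two concrete gaps, one minor and one that hides the entire difficulty.

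\textbf{Minor: the embedding step.} Your general position inequality is backwards: $2k+1\le 2m-3<m+1$ is equivalent to $m<4$, not $m>4$. So for $m\ge 4$ general position alone does \emph{not} embed a $k$-complex with $k\le m-2$ into an $(m+1)$-manifold. What one actually uses here is Stallings' theorem~\cite{Sta-emb-up-to-homot}, which embeds a finite complex up to homotopy type in codimension $\ge 3$ in manifolds of dimension $\ge 5$; this is how the paper produces the subcomplex $K\subset V=W\times\mathbb R$.

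\textbf{Major: ``the right behavior at the boundary.''} Suppose you have compactified $V$ to a compact $M$ via Guilbault and embedded $K$ in $\mathrm{Int}(M)$ with regular neighborhood $R$. To recognize $M$ as a regular neighborhood of $K$ you need the region $M\setminus\mathrm{Int}(R)$ to be an $h$-cobordism (then an $s$-cobordism) from $\partial R$ to $\partial M$. The $\partial R$ side is fine by codimension $\ge 3$ excision. But for the $\partial M$ side you need the inclusion $\partial M\hookrightarrow M$ to be a $\pi_1$-isomorphism, equivalently that the canonical map $e_{_V}\colon\pi_1(\infty_{_V})\to\pi_1(V)$ is an isomorphism. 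Guilbault gives you stability of $\pi_1$ at infinity and vanishing of the Siebenmann obstruction, but \emph{not} that $e_{_V}$ is injective; your phrase ``has the `right' behavior at the boundary'' is exactly where the content of the theorem lives, and you have not supplied it.

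The paper's proof is organized around establishing precisely this. It does \emph{not} compactify first. Instead it embeds $K$ directly in $V$, lets $U=V\setminus\mathrm{Int}(R)$, and shows $e_{_U}\colon\pi_1(\infty_{_U})\to\pi_1(U)$ is an isomorphism so that Siebenmann's open collar theorem applies to $U$. The argument has two halves: (i) multiplying $U$ by $S^1$ and reading off from the resulting one-sided $h$-cobordism that $e_{_U}$ is surjective with \emph{perfect} kernel; (ii) using Guilbault's description of $\pi_1(\infty_{_V})$ as an amalgam $\pi_1(W)\ast_\Gamma\pi_1(W)$ to see that the kernel of $e_{_V}$ acts freely on the Bass--Serre tree and is therefore \emph{free}. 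A perfect free group is trivial, so $e_{_U}$ is an isomorphism. This perfect-and-free trick is the heart of the proof, and nothing in your proposal substitutes for it.

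Finally, your handling of $m=4$ is too vague. The paper treats $m\ge 4$ uniformly by the open-collar route (so $\dim V\ge 5$), rather than by first compactifying $V$; your approach via Guilbault-then-recognize would need the $5$-dimensional end theorem, which is not what~\cite{Gui-prod-R07} supplies.
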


\begin{proof}
The case $m=2$ follows from the classification of open
surfaces~\cite{Ric}, which implies that $W$ 
is diffeomorphic to $\mathbb R^2$.
Suppose $m\ge 4$ so that $V:=W\times\mathbb R$
has dimension $\ge 5$. 
Stalling's theorem~\cite{Sta-emb-up-to-homot}
allows to embed any codimension $\ge 3$ finite
complex up to homotopy type, so
let $K$ be a finite
$k$-dimensional subcomplex onto 
which $V$ deformation retracts.
Let $R$ be a regular neighborhood of $K$.
The inclusions $\d R\to R\to V$ induce
$\pi_1$-isomorphisms as $K$ has codimension $\le 3$.
Excision in the universal cover as in the proof 
of~\cite[Theorem 2.1]{Sie-collar}
implies that $U:=V\setminus\mathrm{Int}(R)$
deformation retracts onto $\d U=\d R$. 

The manifolds $V$ and $W$  are one-ended as 
their homology groups (with twisted coefficients)
vanish in degrees 
$\ge m-2$~\cite[Proposition 1.2]{Sie-collar}.
Denote their ends by $\e_{_V}$, $\e_{_W}$.
By~\cite[Propositions 3.1, 3.2]{Gui-prod-R07}
the end $\e_{_V}$ has arbitrary small
finitely dominated $0$-neighborhoods and its 
fundamental group is stable. Let $\pi_1(\infty_{_V})$ 
denote the inverse limit of the fundamental
groups of the $0$-neighborhoods; this group is
finitely presented being a retract of the (finitely presented)
fundamental group of a $0$-neighborhood.

As $R$ is compact, the above discussion of 
$\e_{_V}$ applies to the end $\e_{_U}$ of $U$, and in particular,
$\pi_1(\infty_{_U})$ and $\pi_1(\infty_{_V})$ are isomorphic
via the inclusion.
If we show that 
the canonical map $e_{_U}\co\pi_1(\infty_{_U})\to \pi_1(U)$
is an isomorphism, then
Siebenmann's open collar theorem~\cite[Theorem 1.6]{Sie-collar}
would imply that $U$ is diffeomorphic to $\d U\times [0,1)$,
which would prove the theorem. 
Since the inclusion $U\to V$
induces a $\pi_1$-isomorphism, $e_{_U}$ is essentially the canonical map 
$e_{_V}\co\pi_1(\infty_{_V})\to \pi_1(V)$.

Next we note that $e_{_U}$ is onto and has perfect kernel.  
To see that multiply $U$ by a circle to make the 
dimension $\ge 6$. 
By Siebenmann's product formula for the end 
obstruction~\cite[Theorem 7.5]{Sie-thesis}, 
$S^1\times U$ is the interior 
of a compact manifold, which is a one-sided $h$-cobordism
that deformation retracts onto the
boundary component of the form $S^1\times\d U$.
The other boundary component $B$ satisfies
$\pi_1(B)\cong\pi_1(S^1)\times\pi_1(\infty_{_U})$.
Restricting the retraction to $B$ gives
a map $B\to S^1\times\d U$, which is 
a $\pi_1$-surjection with perfect kernel
by a standard argument involving
Poincar\'e duality in the universal cover as e.g.
in~\cite[Theorem 2.5]{GuiTinII}.
Under the obvious identifications this
$\pi_1$-surjection corresponds
to the product of $e_{_U}$
and the identity of $\pi_1(S^1)$, so
$e_{_U}$ is surjective with perfect kernel.

On the other hand,
it follows from~\cite[Propositions 3.5]{Gui-prod-R07}
that $\pi_1(\infty_{_V})$ is isomorphic to
a (possibly trivial) amalgamated product
of the form $\pi_1(W)\ast_{_\G} \pi_1(W)$, and moreover,
under the isomorphism $e_{_V}$ is identified with
the standard projection 
$\pi_1(W)\ast_{_\G} \pi_1(W)\to \pi_1(W)$ that extends 
the identity maps on each factor. The kernel of this
is a free group. (Indeed, the kernel is a normal subgroup that
intersects the factors trivially, and hence intersects
trivially each of their conjugates. Thus
the kernel acts freely on the Bass-Serre tree for the amalgamated 
product, and hence it is a free group).
Since a free perfect group must be trivial, 
$e_{_U}$ is an isomorphism, which finishes the proof. 
\end{proof}

\begin{rmk} 
\label{rmk: m=3 case collar}
If $m=3$ and $\pi_1(W)$ is trivial or $\mathbb Z$, which
are the fundamental groups of $m-2$-complexes for which the 
topological surgery is known to work, then the same proof,
based on the open collar theorem of~\cite{Gui-collar92}, shows
that $W\times\mathbb R$ is homeomorphic to the interior of a regular
neighborhood of $K$, i.e. $D^4$, or $D^3\times S^1$, respectively. 
\end{rmk}

\begin{rmk}
\label{rmk: prod of contr mnflds}
The product of $\mathbb R$ with
any aspherical $(n-1)$-manifold  
is covered by $\mathbb R^{n}$. In fact,
the references below imply that
the product of any two open contractible manifolds
is diffeomorphic to $\mathbb R^n$
(provided their dimensions are positive and add up to $n$). 
This is due to Stallings~\cite{Sta62} if $n\ge 5$.
The case of the product of an open contractible
$3$-manifold with $\mathbb R$ 
is due to Luft~\cite[Theorem 5]{Luf87}
and Perelman's solution of Poincar\'e conjecture.
Finally, any contractible
open $2$-manifold is diffeomorphic to $\mathbb R^2$
by the classification of surfaces~\cite{Ric}.
These results only give a PL homeomorphism
onto $\mathbb R^n$, but a smooth
structure on $\mathbb R^n$ is uniquely determined by the 
PL structure it induces~\cite[Corollary 6.6]{Mun60}.
\end{rmk}

\begin{cor}
\label{cor: product aspherical Rn}
Let $C$ be an open aspherical $l$-manifold and 
$B$ be a closed manifold such that $B\times C$ is homeomorphic to the product
of $\mathbb R$ and an open manifold. Then the universal 
cover of $C$ is homeomorphic to $\mathbb R^l$.
\end{cor}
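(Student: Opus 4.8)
The plan is to reduce to the case where $C$ is simply connected, and then to run the end-theoretic argument from the proof of Theorem~\ref{thm: collar reg nbhd}. Write the hypothesis as a homeomorphism $B\times C\cong\mathbb R\times M$. It identifies $\pi_1(M)$ with $\pi_1(B)\times\pi_1(C)$, and the subgroup $\pi_1(B)\times 1$ determines a connected cover of $\mathbb R\times M$ of the form $\mathbb R\times M'$, which corresponds on the other side to the cover $B\times\widetilde C$; hence $B\times\widetilde C\cong\mathbb R\times M'$. Since $C$ is aspherical, $\widetilde C$ is a contractible open $l$-manifold that equals its own universal cover, so it suffices to prove: \emph{if $C$ is a contractible open $l$-manifold, $B$ a closed manifold, and $B\times C\cong\mathbb R\times M$, then $C\cong\mathbb R^l$}. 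When $l\le 2$ this follows from the classification of open surfaces, and when $B$ is a point it follows because then $M$ is contractible (as $\mathbb R\times M\simeq C$) and $C\cong\mathbb R\times M\cong\mathbb R^l$ by Remark~\ref{rmk: prod of contr mnflds}; so I may assume $l\ge3$ and $\dim B\ge1$.

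Now set $V:=\mathbb R\times M\cong B\times C$. Since $C$ is contractible, $M\simeq B$ is a finite complex, so Guilbault's results~\cite{Gui-prod-R07} apply to $V$: the end of $V$ has stable fundamental pro-group, and by~\cite[Proposition~3.5]{Gui-prod-R07} there is an isomorphism $\pi_1(\infty_V)\cong\pi_1(M)\ast_\Gamma\pi_1(M)$ carrying the canonical map $e_V\co\pi_1(\infty_V)\to\pi_1(V)$ to the fold map, so $\ker(e_V)$ acts freely on the Bass--Serre tree of the amalgam and is therefore a free group. On the other hand, since $B$ is compact, cofinally the neighborhoods of infinity of $B\times C$ have the form $B\times N$ with $N$ a neighborhood of infinity of $C$; hence $\pi_1(\infty_V)\cong\pi_1(B)\times\pi_1(\infty_C)$ with $\pi_1(\infty_C)$ stable, and under this identification $e_V$ becomes the projection onto $\pi_1(B)=\pi_1(V)$, so $\ker(e_V)\cong\pi_1(\infty_C)$. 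Thus $\pi_1(\infty_C)$ is free.

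Next, exactly as in the proof of Theorem~\ref{thm: collar reg nbhd} --- crossing a $0$-neighborhood complement with $S^1$, using Siebenmann's product formula to exhibit it as the interior of a one-sided $h$-cobordism, and applying Poincar\'e duality in the universal cover --- one obtains that $\ker(e_V)$ is perfect. A free perfect group is trivial, so $\pi_1(\infty_C)=1$, i.e.\ $C$ is simply connected at infinity. A contractible open $l$-manifold that is simply connected at infinity is homeomorphic to $\mathbb R^l$: by Stallings~\cite{Sta62} for $l\ge5$, by Freedman's theorem for $l=4$, and by Perelman's theorem for $l=3$ (compare Remark~\ref{rmk: prod of contr mnflds}). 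The only situation with $\dim(B\times C)\le4$ not covered above is $B=S^1$, $\dim C=3$; there $\pi_1(V)=\mathbb Z$ is a good group, so the surgery steps just used go through via topological surgery in dimensions $4$ and $5$ (compare Remark~\ref{rmk: m=3 case collar}), and Perelman's theorem again applies.

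The main obstacle I anticipate is the ``perfect kernel'' step: one must verify that the fundamental pro-group at infinity of $B\times C$ genuinely splits off the constant direct factor $\pi_1(B)$ in a way compatible both with the canonical map and with Guilbault's amalgamated-product description, so that the free-group conclusion read off from the $\mathbb R\times M$ side and the perfect-group conclusion from the Poincar\'e-duality trick (now applied to $V=B\times C$ itself rather than to a product with $\mathbb R$) combine to kill $\pi_1(\infty_C)$. This is also where the low-dimensional bookkeeping, and the appeal to topological surgery for the good group $\mathbb Z$, become necessary.
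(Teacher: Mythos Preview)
Your argument is correct and follows essentially the same path as the paper's: pass to contractible $C$, identify $\pi_1(\infty_{B\times C})\cong\pi_1(B)\times\pi_1(\infty_C)$ with $e_V$ the first projection, and kill $\pi_1(\infty_C)$ by showing $e_V$ is an isomorphism. The only difference is that the paper simply invokes Theorem~\ref{thm: collar reg nbhd} (together with Remark~\ref{rmk: m=3 case collar} for the low-dimensional case) to obtain that $e_V$ is an isomorphism, whereas you unpack that theorem's proof inline; your self-flagged worry about the perfect-kernel step is unfounded, since $V=B\times C$ is literally of the form $W\times\mathbb R$, so the argument of Theorem~\ref{thm: collar reg nbhd} applies verbatim.
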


\begin{proof} We can assume $\dim(C)\ge 3$, and 
that $C$ is contractible, because the property of being
homeomorphic to the product of $\mathbb R$ and an open manifold
in inherited by any cover. Let $W$ be an open manifold such that 
$B\times C$ is homeomorphic to $W\times\mathbb R$.
By~\cite[Propositions 3.1-3.2]{Gui-prod-R07} 
$W\times\mathbb R$, and hence $B\times C$,
has only one end at which $\pi_1$ is stable, and
$\pi_1(\infty)$ is finitely presented. Hence the same holds
for the end of $C$. It follows that 
$\pi_1(\infty)$ is isomorphic to $\pi_1(B)\times\pi_1(\infty_{_C})$
and the projection on the first factor can be identified
with the canonical map $\pi_1(\infty)\to \pi_1(B\times C)$,
which is an isomorphism by Theorem~\ref{thm: collar reg nbhd}
and Remark~\ref{rmk: m=3 case collar}.
Thus $\pi_1(\infty_{_C})$ is trivial. 
Any open contractible $l$-manifold
that is simply-connected at infinity is homeomorphic to $\mathbb R^l$.
(This is due to~\cite{Sta62} if $l\ge 5$ and 
to~\cite{Gui-collar92} if $l=4$, while the case $l=3$ follows 
from~\cite{HusPri} and the non-existence of fake $3$-cells
by Perelman's solution of the Poincar\'e conjecture).
\end{proof}

\begin{cor} Let $F\to E\to B$ be a fiber bundle 
whose base $B$ is a closed manifold with $\pi_2(B)=0$, 
and fiber $F$ is a compact
contractible $l$-manifold. 
If $\mathrm{Int}(E)$ is homeomorphic
to $W\times\mathbb R$ for a manifold $W$, then
$\mathrm{Int}(F)$ is homeomorphic to $\mathbb R^l$.
\end{cor}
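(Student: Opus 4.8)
The plan is to reduce the statement to Corollary~\ref{cor: product aspherical Rn} by recognizing the total space $E$ of the given bundle as, up to homeomorphism of interiors, a product of a closed manifold with an open aspherical manifold whose universal cover is the relevant Euclidean space. First I would replace $E$ by its interior throughout, so that the fiber becomes the open contractible manifold $C:=\mathrm{Int}(F)$, and $\mathrm{Int}(E)$ is the total space of a fiber bundle $C\to \mathrm{Int}(E)\to B$ with structure group the homeomorphism group of $C$. The key geometric input is that $\mathrm{Int}(E)$ is covered by a product: since $B$ is aspherical (being a closed manifold with $\pi_2(B)=0$, and by hypothesis one may pass to a cover to assume $\pi_1$ is as needed — actually we only need $\pi_2=0$ and the fact that $C$ is contractible) the long exact homotopy sequence of the bundle gives $\pi_1(\mathrm{Int}(E))\cong\pi_1(B)$ and $\pi_k(\mathrm{Int}(E))\cong\pi_k(B)$ for $k\ge 2$; in particular $\mathrm{Int}(E)$ is aspherical and a $K(\pi_1(B),1)$. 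So $\mathrm{Int}(E)$ is itself an open aspherical manifold.

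Next I would apply Corollary~\ref{cor: product aspherical Rn} in the right way. The cleanest route: set $C':=\mathrm{Int}(E)$, which is an open aspherical manifold of dimension $\dim(B)+l$, and note that by hypothesis $C'$ is homeomorphic to $W\times\mathbb R$, hence in particular $C'$ is homeomorphic to the product of $\mathbb R$ and an open manifold. But to conclude something about $F$, I should instead run the argument that pins down $\pi_1$ at infinity. The end of $\mathrm{Int}(E)$ is, by restricting the bundle over $B$, built from the end of $F$; more precisely, since $B$ is compact, a neighborhood of infinity in $\mathrm{Int}(E)$ is a bundle over $B$ with fiber a neighborhood of infinity in $C=\mathrm{Int}(F)$, which is a collar $\partial F\times[0,1)$. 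Thus $\pi_1(\infty_{\mathrm{Int}(E)})\cong \pi_1(B)\times\pi_1(\partial F)$, and the canonical map to $\pi_1(\mathrm{Int}(E))\cong\pi_1(B)$ is projection onto the first factor followed by the isomorphism. By~\cite[Propositions 3.1-3.2]{Gui-prod-R07} applied to $W\times\mathbb R\cong\mathrm{Int}(E)$, this canonical map is surjective with the stronger structure coming from Theorem~\ref{thm: collar reg nbhd} and Remark~\ref{rmk: m=3 case collar} forcing it to be an isomorphism; hence $\pi_1(\partial F)$ is trivial, so $\pi_1(\infty_C)$ is trivial. Then $C=\mathrm{Int}(F)$ is an open contractible $l$-manifold simply connected at infinity, hence homeomorphic to $\mathbb R^l$ by the dimension-by-dimension argument quoted at the end of the proof of Corollary~\ref{cor: product aspherical Rn} (Stallings for $l\ge5$, Guilbault's collar theorem for $l=4$, and Husch-Price plus Perelman for $l=3$; dimensions $\le 2$ are trivial since a compact contractible surface is a disk).

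The main obstacle I expect is the identification of the end of $\mathrm{Int}(E)$ and the behavior of the canonical $\pi_1$-map there: one must check carefully that, because $B$ is a closed manifold and hence the bundle is proper with compact base, a cofinal system of neighborhoods of infinity in $\mathrm{Int}(E)$ is given by the $\partial F\times[0,\epsilon)$-subbundles, so that stability of $\pi_1$ at infinity and finite presentation of $\pi_1(\infty)$ for $\mathrm{Int}(E)$ (which we get for free from $W\times\mathbb R$ via Guilbault) transfer to $C$, and the canonical map $\pi_1(\infty)\to\pi_1$ factors as claimed. A secondary subtlety is ensuring the surgery-theoretic isomorphism conclusion of Theorem~\ref{thm: collar reg nbhd} really applies: its hypothesis is a homotopy-finiteness condition, satisfied here because $\mathrm{Int}(E)$ deformation retracts onto a compact manifold $B$ (take $B$ embedded as the zero section of a mapping-cylinder neighborhood, using that $F$ is contractible), so $\mathrm{Int}(E)$ is homotopy equivalent to the finite complex $B$. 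Once these two points are in place the rest is a direct citation of the already-established corollary's final paragraph. I would also remark that the low-dimensional cases $l\le 2$ and the case where $\partial F=\emptyset$ (forcing $F$ closed contractible, hence by Poincaré duality $l=0$) are handled trivially and separately.
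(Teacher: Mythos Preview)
Your approach is essentially the paper's: use Theorem~\ref{thm: collar reg nbhd} (and Remark~\ref{rmk: m=3 case collar}) to show the inclusion $\partial E\hookrightarrow E$ is a $\pi_1$-isomorphism, then read off $\pi_1(\partial F)=1$ from the fiber bundle $\partial F\to\partial E\to B$, and finish with the simply-connected-at-infinity characterization of $\mathbb R^l$. Two corrections are in order. First, your claim $\pi_1(\infty_{\mathrm{Int}(E)})\cong\pi_1(B)\times\pi_1(\partial F)$ is unjustified: $\partial E\to B$ is a fiber bundle, not a product, so you only get the exact sequence $\pi_2(B)\to\pi_1(\partial F)\to\pi_1(\partial E)\to\pi_1(B)\to 1$; it is precisely here, and only here, that the hypothesis $\pi_2(B)=0$ enters, yielding a short exact sequence whose quotient map is the canonical $\pi_1(\infty)\to\pi_1(\mathrm{Int}(E))$ --- once that map is an isomorphism, exactness forces $\pi_1(\partial F)=1$. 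Second, the asphericity discussion in your first paragraph is both incorrect ($\pi_2(B)=0$ does not make $B$ aspherical) and unnecessary: neither $B$ nor $\mathrm{Int}(E)$ need be aspherical for the argument, and the paper makes no such claim.
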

\begin{proof} 
We can assume $l>2$; in particular if $\dim(W)=3$,
then $\dim(B)\le 1$. Theorem~\ref{thm: collar reg nbhd} and 
Remark~\ref{rmk: m=3 case collar} imply that $\mathrm{Int}(E)$
is homeomorphic to the interior of a regular neighborhood
of a subcomplex of codimension $>2$, hence the inclusion
$\d E\to E$ induces a $\pi_1$-isomorphism.
The same is then true for the the bundle projection $\d E\to B$,
whose fiber is $\d F$. Since $\pi_2(B)=0$,
the homotopy sequence of the fibration $\d F\to \d E\to B$
implies $\pi_1(\d F)=0$, so $\Int(F)$ is simply-connected at infinity, and
hence homeomorphic to $\mathbb R^l$, 
as in the proof of Corollary~\ref{cor: product aspherical Rn}.
\end{proof}

\begin{proof}[Proof of Corollary~\ref{cor-intro: Rn}]
If $W$ is homeomorphic to $\mathbb R^n$, then
it is diffeomorphic to $\mathbb R^n$ when $n\neq 4$,
and $W\times S^1$ is diffeomorphic to 
$\mathbb R^4\times S^1$~\cite[Theorem 2.2]{Sie-collar}.
Of course, $\mathbb R^n\times S^1$ admits a metric of $K\le 0$.
Conversely, suppose $W\times S^1$
admits a metric of $K\le 0$, and write it as $X/G$
where $G$ is infinite cyclic.
The generator of $G$
is either axial or parabolic.
In the former case $X/G$ is a vector bundle
over $S^1$ as the normal exponential map to
a totally geodesic submanifold of $X$ is a diffeomorphism,
and the bundle is trivial because $W\times S^1$ is orientable.
In the latter case $G$ fixes a Busemann function,
so $X/G$ is the product of $\mathbb R$ and an open manifold,
and the claim follows from 
Corollary~\ref{cor: product aspherical Rn}.
\end{proof}

\section{Manifolds without $\mathbb R$-factors}
\label{sec: without R-factors}

The purpose of this section is to give a method
of constructing $\mathbb R^n$-covered manifold
that are not homeomorphic to 
$\mathbb R^{n-1}\times\mathbb R$
covered manifolds.

\begin{thm}
\label{thm: count many not Rn-1xR}
Let $V$ be an open manifold that is
covered by $\mathbb R^n$, $n\ge 5$,
and is homotopy equivalent to a finite complex
of dimension $\le n-3$.
If $\pi_1(V)$ is nontrivial, then the tangential homotopy type of $V$ contains 
countably many open manifolds that are covered by $\mathbb R^n$ 
but are not homeomorphic to manifolds covered by 
$\mathbb R\times\mathbb R^{n-1}$.
\end{thm}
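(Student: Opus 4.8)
The plan is to produce, for each integer $N\ge 1$, an open manifold $V_N$ in the tangential homotopy type of $V$ which is $\mathbb R^n$-covered but whose fundamental group at infinity distinguishes it from any manifold covered by $\mathbb R\times\mathbb R^{n-1}$, and then to argue that infinitely many of the $V_N$ are pairwise non-homeomorphic, so at least countably many distinct homeomorphism types occur. The construction is by a controlled end-connected-sum (plumbing) trick: start from $V\cong\mathrm{Int}(R)$ where $R$ is a regular neighborhood of a codimension-$\ge 3$ complex $K$ (this uses Theorem~\ref{thm: collar reg nbhd} and Stallings' embedding, exactly as in Theorem~\ref{thm-intro: reg nbhd}), and then modify the collar structure near the end of $V$ by boundary-connect-summing with copies of a homology $n$-sphere $\Sigma$ punctured along a disk, or more precisely attaching to the end a family of handles prescribed by a non-stably-free element of a Whitehead-type or $\widetilde K_0$ group, chosen so that $N$ copies give a distinct invariant. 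Since the modification is supported in a collar of the end and does not change $V$ up to compactly-supported homotopy, each $V_N$ stays in the tangential homotopy type of $V$ (the tangent bundle is unchanged outside a compact set and the change is through contractible pieces, so the bundle data is untouched). One verifies $V_N$ is still $\mathbb R^n$-covered using Remark~\ref{rmk: prod of contr mnflds}: the universal cover is a product of open contractible manifolds of the appropriate dimensions, or directly that simply-connectedness at infinity of the universal cover is preserved, so Stallings/Luft apply.

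The next step is to locate the invariant that both obstructs the $\mathbb R\times\mathbb R^{n-1}$ form and separates the $V_N$. If a manifold $M$ of dimension $n$ is covered by $\mathbb R\times\mathbb R^{n-1}$, then $M\cong\mathbb R\times W'$ with $W'$ an open $(n-1)$-manifold homotopy equivalent to a finite complex of dimension $\le n-3$, so by Theorem~\ref{thm: collar reg nbhd} and Remark~\ref{rmk: m=3 case collar} $M$ is the interior of a regular neighborhood of a codimension-$\ge 3$ subcomplex; hence the end of $M$ has \emph{vanishing} Siebenmann end obstruction and, more to the point, the canonical map $e\colon\pi_1(\infty_M)\to\pi_1(M)$ is an isomorphism (this is precisely what was proved inside the proof of Theorem~\ref{thm: collar reg nbhd}, via the amalgamated-product description of $\pi_1(\infty)$ for a product with $\mathbb R$ and the free-perfect-group argument). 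So the strategy is to build $V_N$ so that $e_N\colon\pi_1(\infty_{V_N})\to\pi_1(V_N)=\pi_1(V)$ is \emph{not} injective, with kernel depending on $N$. Concretely, taking $V_N$ to be $V$ with $N$ "fake collars" grafted on, one arranges $\pi_1(\infty_{V_N})\cong\pi_1(V)\ast F_N$ for a free group $F_N$ of rank depending on $N$ (or more robustly, a group built from $\pi_1$ of a homology-sphere family as in Appendix~\ref{sec: homology sph}, so that $e_N$ has a nontrivial perfect kernel whose "size" grows with $N$). Then no $V_N$ with $N\ge 1$ is $\mathbb R\times\mathbb R^{n-1}$-covered, and $V_N\not\cong V_{N'}$ for $N\ne N'$ because $\ker e_N\not\cong\ker e_{N'}$ — the end invariant $(\pi_1(\infty),\ e)$ is a homeomorphism invariant of the manifold.

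The verification that $V_N$ is $\mathbb R^n$-covered while having non-injective $e_N$ is the delicate balancing act, and I expect the \textbf{main obstacle} to be exactly this tension: we need the universal cover to remain a Euclidean space (which forces the end of the universal cover to be simply connected at infinity and tame) while the end of $V_N$ itself is made "wild" in a controlled way. The resolution is that $\pi_1(V)$ is nontrivial, so the cover responsible for the end behavior is not the universal cover — the grafted homology-sphere-type pieces have finite or perfect fundamental group that dies upstairs, so in the universal cover they become (a union of translates of) simply-connected pieces that can be absorbed, whereas downstairs they deform the end $\pi_1$. Making this rigorous requires: (a) choosing the grafted piece to be a punctured homology sphere or a mapping-telescope thereof so that the universal cover of $V_N$ is obtained from $\mathbb R^n$ by a compactly-many-translates modification that is still $\mathbb R^n$ by Stallings (one-ended, simply connected at infinity, dimension $\ge 5$); (b) computing $\pi_1(\infty_{V_N})$ via a Mayer–Vietoris / van Kampen argument at infinity, as in~\cite[Proposition 3.5]{Gui-prod-R07}; and (c) checking the tangential homotopy type is unchanged, which is immediate since the modification is a boundary connect sum with closed-manifold-with-disk-removed pieces whose stable tangent bundles are trivial. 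The strengthening to a continuum (mentioned after Theorem~\ref{thm-intro: count many}) comes from replacing the discrete family $\{F_N\}$ with an uncountable family of pairwise non-isomorphic kernels, which is where the homology $3$-spheres of Appendix~\ref{sec: homology sph} and the technical assumption on $\pi_1(V)$ enter; for the countable statement the cruder free-group construction suffices.
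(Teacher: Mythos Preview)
Your high-level strategy is correct and matches the paper: build $V_N$ in the tangential homotopy type of $V$ so that the canonical map $e_N\colon\pi_1(\infty_{V_N})\to\pi_1(V_N)$ fails to be injective, and invoke the fact (from Theorem~\ref{thm: collar reg nbhd}) that for any manifold covered by $\mathbb R\times\mathbb R^{n-1}$ this map is an isomorphism. But your actual construction is too vague to work, and where it is specific it is wrong.

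The concrete gap is this. You need a modification that simultaneously (a) preserves the homotopy type of $V$, (b) makes $\pi_1(\infty)\to\pi_1$ non-injective, and (c) keeps the universal cover diffeomorphic to $\mathbb R^n$. Your ``punctured homology $n$-sphere'' grafted onto the end does not achieve (a): a homology sphere minus a disk is acyclic but not contractible unless the sphere is $S^n$, so gluing it on changes $\pi_1(V)$ and destroys asphericity. If instead you graft on a genuinely contractible piece along a disk, you change nothing at all. The paper resolves this tension by attaching $C\times S^1$, where $C$ is a compact contractible $(n-1)$-manifold with $\pi_1(\partial C)\neq 1$, along a tubular neighborhood of an embedded circle $c\subset\partial N$ that is \emph{homotopically nontrivial in $N$}. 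Because $C$ is contractible, $C\times S^1$ deformation retracts to the attaching solid torus, giving (a); because $\pi_1(\partial C)$ is killed in $C$, loops in $\partial C\times\{*\}$ die in the interior, giving (b); and because $c$ is nontrivial, in the universal cover each copy of $C\times S^1$ unwraps to $C\times\mathbb R\cong\mathbb R^{n}$-pieces attached along disks, and a Cantrell--Stallings hyperplane unknotting argument gives (c). Your proposal never isolates this $S^1$-factor trick, and your sketch for (c) (``the universal cover is a product of open contractible manifolds'') is simply false for any construction of this kind.

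A secondary error: you assert $\pi_1(\infty_{V_N})\cong\pi_1(V)\ast F_N$ with $F_N$ free, and say ``the cruder free-group construction suffices'' for the countable case. In the paper's construction $\pi_1(\infty_{V_N})$ is an amalgam of $\pi_1(\partial N)$ and $\pi_1(\partial C_N)\times\mathbb Z$ over $\langle c\rangle$, not a free product, and the distinguishing invariant is the rank of $\pi_1(\partial C_N)$ via Grushko (taking $C_N$ to be the $N$-fold boundary connected sum of $C$). There is no free-group kernel here; indeed the groups $\pi_1(\partial C)$ used are perfect and freely indecomposable.
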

\begin{proof}
Stalling's theorem~\cite{Sta-emb-up-to-homot}
allows one to embed any codimension $\ge 3$ finite
complex up to homotopy type, so
let $N\subset V$ be a regular neighborhood of 
that subcomplex.
Since the subcomplex 
has codimension $\le 3$, we know that
$\d N$ is connected and
the inclusions $\d N\to N\to V$ induce
$\pi_1$-isomorphisms. The universal cover
of $\Int(N)$ is diffeomorphic to $\mathbb R^n$
because it is a regular neighborhood of a codimension
$\ge 3$ subcomplex, which easily implies
that it is simply-connected at infinity,
as mentioned e.g. 
in the statement of~\cite[Theorem 2.1]{Sie-collar}.

Thus there is an embedded circle in $\d N$ that
is homotopically nontrivial in $N$. By replacing
it with a embedded circle homotopic to the square
of the original one, we may assume its normal bundle
is trivial (and it is still homotopically nontrivial
in $\pi_1(N)$ since this group is torsion-free). 
Denote the circle by $c$, and
its closed tubular neighborhood by $T_c$.

Let $C$ be any compact contractible $(n-1)$-manifold 
bounded by a non-simply-connected homology sphere
whose fundamental group is freely indecomposable
(see e.g. Appendix~\ref{sec: homology sph}, 
or~\cite{CurKwu65, Gla67}).
Fix a closed coordinate disk $\Delta\subset \d C$,
attach $C\times S^1$ to $N$ by identifying
$\Delta\times S^1$ with $T_c$, and denote
the resulting compact manifold with boundary by
$N_{c,C}$
(the choices involved in this identification will
be irrelevant for our purposes).

The fundamental group of the boundary of $N_{c,C}$  
is the amalgamated product of $\pi_1(\d N)$ and
$\pi_1(\d C)\times \mathbb Z$ along the infinite
cyclic subgroup generated by the homotopy class of $c$.
Taking the quotient by the normal closure
of $\pi_1(\d N)$ we see that the amalgamated product 
surjects onto $\pi_1(\d C)$,
so its rank is at least as large as the rank of $\pi_1(\d C)$.

By taking boundary connected sums of $C$
with itself one gets a sequence $\{C_i\}_{i\in\mathbb N}$ of compact contractible
manifolds such that $C_i$ bounds the connected sum $\Sigma_i$ of 
$i$ copies of the homology sphere $\Sigma:=\d C$. 
Grushko's theorem implies $\mathrm{rank}(\pi_1(\Sigma_i))=
i\,\mathrm{rank}(\pi_1(\Sigma))$, so
$V_i:=\Int(N\#_{c,C_i})$ are pairwise 
non-homeomorphic because their fundamental groups
of infinity are pairwise non-isomorphic. 
Deformation retraction $C_i\to \Delta$ extends
to a deformation retraction of $N_{c,C_i}\to N$,
so $V_i$ is tangentially homotopy equivalent to $V$.

Next we show that $V_i$ is covered by $\mathbb R^n$.
Denote the universal cover of $N$ and $V$
by $\widetilde N$ and $\widetilde V$, respectively.
Since $c$ is homotopically nontrivial,
it unravels in $\widetilde N$ to a 
proper embedding $\mathbb R\to\d\widetilde N$.
Let $T$ be the union of all lifts of $\Int(T_c)$ to
$\d\widetilde N$. Each component of $T$ is an open
disk, and these disks have disjoint closures.
Then $\widetilde V\cup T$ is a noncompact manifold
with boundary $T$. Note that $\widetilde V\cup T$ 
can be compactified to $D^n$ where $T$ becomes
the union of a countable collection of 
round open disks with pairwise
disjoint closures, namely, attach 
a collar to $T$ and invoke the strong version of
Cantrell-Stalling's hyperplane linearization 
theorem~\cite[Corollary 9.3]{CKS}.
On the other hand, 
the universal cover of $S^1\times (\Int(\Delta)\cup\Int(C_i))$
is a noncompact manifold with boundary whose
interior is diffeomorphic to $\mathbb R^n$~\cite{Sta62}
and whose boundary is an open disk, so it also
compactifies to $D^n$ so that the disk 
$\mathbb R\times\Int(\Delta)$ becomes
a round disk. It follows that attaching
each lift of $S^1\times (\Int(\Delta)\cup\Int(C_i))$
to $\tilde V\cup T$ simply amounts to attaching
an open collar to each component of $T$, and hence
the result has interior diffeomorphic to $\widetilde V$,
which is $\mathbb R^n$.

It remains to show that $V_i$ is not homeomorphic
to a manifold covered by $\mathbb R\times\mathbb R^{n-1}$. 
If it is, then Theorem~\ref{thm: collar reg nbhd} applies
and the map $\pi_1(\infty)\to\pi_1(V_i)$
is an isomorphism. Hence the same is true
for the inclusion $\d N_{c,C_i}\to N_{c,C_i}$, which on the other hand
is not injective because any homotopically
nontrivial loop in $\Sigma_i$
is null-homotopic in $C_i$. This contradiction completes the proof. 
\end{proof}

We can replace the conclusion ``countably many'' in 
Theorem~\ref{thm: count many not Rn-1xR}
by "a continuum of'' under the following technical assumption. 
Let $\mathcal S$ be the class of groups 
of the form $\pi_1(\Sigma^3\times S^1)$
where $\Sigma^3$ is an aspherical homology $3$-sphere
that (smoothly) bounds a contractible manifold. Let us
fix a set $\mathcal S_0$ of pairwise non-isomorphic groups 
in $\mathcal S$ such that any group in $\mathcal S$ is isomorphic to a group in 
$\mathcal S_0$; the set 
$\mathcal S_0$ is countably infinite (see Appendix~\ref{sec: homology sph}), hence we write 
$\mathcal S_0=\{\pi_1(\Sigma_i^3\times S^1)\}_{i\in\mathbb N}$.
Given an infinite subset $\mathfrak S$ of $\mathcal S_0$,
we say that a group $G$ is {\it no-$\mathfrak S$}
if {\bf one} of the following conditions (1), (2) holds: 
\vspace{-4pt}
\begin{itemize}
\item[\textup{(1)}] 
no group in $\mathfrak S$ can be embedded 
into $G$.
\item[\textup{(2)}]\vspace{2pt}
$G$ splits as a (possibly infinite and possibly consisting
of a single vertex) graph of groups such that
\begin{itemize}
\item[\textup{(a)}]\vspace{2pt}
all edge groups have cohomological 
dimension $\le 2$,
\item[\textup{(b)}]\vspace{2pt} 
no vertex group is isomorphic
to a group in $\mathfrak S$. 
\item[\textup{(c)}]\vspace{2pt}
no vertex group
splits over a trivial or an infinite cyclic subgroup.
\end{itemize}
\end{itemize}

Here {\it splits} stands
for {\it splits as an amalgamated product or an
HNN-extension}. 
Some examples of no-$\mathfrak S$ groups are given below:
\begin{enumerate} 
\item[(i)] If $G$ has cohomological dimension $<4$, then $G$ satisfies (1) because
groups in $\mathfrak S$ has cohomological dimension $4$.
\item[(ii)] If $G$ does not splits over a trivial or an infinite cyclic subgroup,
and $G\notin\mathfrak S$,
then $G$ satisfies (2) for the graph of groups that 
consists of a single vertex.
\item[(iii)] 
Suppose $\mathfrak S$ is associated with 
the infinite family of $\Sigma_i$'s that are modelled on 
$\widetilde{SL}_2(\mathbb R)$ geometry and 
described in Appendix~\ref{sec: homology sph}.
Then $\mathbb Z^3$ and $\mathbb Z\ast\mathbb Z$
embed into every group in $\mathfrak S$.
Thus if $\mathbb Z^3$ or $\mathbb Z\ast\mathbb Z$
does not embed into $G$,
then $G$ is no-$\mathfrak S$ by (1). 
\item[(iv)]
If $G$ splits as a finite graph of groups that satisfies
(a) and (c), then it can be arranged to satisfy (b) 
by removing the (finitely many) vertex groups from
$\mathfrak S$. 
\end{enumerate} 

\begin{prop}
Let $G$ be an $n$-dimensional Poincar{\'e} duality group
(e.g. the fundamental group of a closed aspherical $n$-manifold). Then $G$
is no-$\mathfrak S$ if and only if $G\notin\mathfrak S$.
In particular, $G$ is no-$\mathfrak S$ for some $\mathfrak S$.
\end{prop}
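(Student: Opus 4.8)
The plan is to prove both implications of the biconditional, the forward one being essentially trivial and the reverse one being the content. First I would observe that if $G$ is no-$\mathfrak S$, then it cannot belong to $\mathfrak S$: if $G$ satisfied condition (1), then $G$ could not be embedded into $G$ (since $G\in\mathfrak S$ would be a group in $\mathfrak S$ embedding into $G$), a contradiction; and if $G$ satisfied condition (2), then the graph of groups would have to be a single vertex — because any nontrivial splitting of a one-ended $n$-dimensional Poincar\'e duality group with $n\ge 3$ is over a subgroup that is itself a Poincar\'e duality group of dimension $n-1$, hence of cohomological dimension $n-1\ge 3>2$, violating (a) unless the splitting is trivial — but then the single vertex group is $G$ itself, and (b) says $G\notin\mathfrak S$.

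For the reverse direction, suppose $G\notin\mathfrak S$; I want to show $G$ is no-$\mathfrak S$. The natural candidate is to use condition (2) with the trivial (single-vertex) graph of groups, as in example (ii): conditions (a) is vacuous, (b) holds because $G\notin\mathfrak S$, and the only thing to check is (c), namely that $G$ does not split over a trivial or infinite cyclic subgroup. If $G$ does not so split, we are done via (2). If $G$ does split over a trivial or infinite cyclic subgroup, then I would fall back on condition (1): I claim no group in $\mathfrak S$ can embed into $G$. A group in $\mathfrak S$ has the form $\pi_1(\Sigma^3\times S^1)=\pi_1(\Sigma^3)\times\mathbb Z$ with $\Sigma^3$ an aspherical homology sphere, so it is a $4$-dimensional Poincar\'e duality group; in particular it is one-ended, torsion-free, not free, and does not split over a trivial or infinite cyclic subgroup (again because a splitting of a PD$_4$ group is over a PD$_3$ subgroup, which has cohomological dimension $3$). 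The key point is that one-endedness and non-splitting over small subgroups is inherited in a controlled way: if such a group $H$ embeds into $G$ and $G$ acts on the Bass–Serre tree $T$ of its splitting over trivial/cyclic edge groups, then $H$ acts on $T$ with edge stabilizers that are subgroups of $G$-edge-stabilizers, hence trivial or (virtually) cyclic; since $H$ does not split over such subgroups and does not fix a point with cyclic stabilizer issue, $H$ must stabilize a vertex, so $H$ embeds in a vertex group — and by dimension reasons ($\cd H=4=\cd G=n$) the vertex group must be all of $G$ and the splitting trivial, contradiction.

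The main obstacle I expect is making the last embedding argument fully rigorous — specifically controlling what happens when $H$ acts on $T$ without a global fixed point but without splitting, which by Bass–Serre theory forces $H$ to be either a nontrivial amalgam/HNN over an edge stabilizer or to have a fixed end; I would need to rule out the fixed-end case (which would make $H$ act on a line, giving a surjection to $\mathbb Z$ or $D_\infty$ with kernel inside an edge stabilizer, impossible for a one-ended PD$_4$ group since such a kernel would be trivial or cyclic and $H/(\text{kernel})$ would then have cohomological dimension $\le 2$, forcing $\cd H\le 3$). Once that dichotomy is nailed down the proof is short. The final sentence ``$G$ is no-$\mathfrak S$ for some $\mathfrak S$'' follows because for an $n$-dimensional PD group with $n\ne 4$ we have $\cd G\ne 4$ so $G\notin\mathfrak S$ automatically, and for $n=4$ we may simply choose $\mathfrak S$ to avoid the (at most one) isomorphism class of $G$ in $\mathcal S_0$, using that $\mathcal S_0$ is infinite.
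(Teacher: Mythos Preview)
Your argument arrives at the right conclusion, but it is considerably more elaborate than necessary, and the elaboration stems from not exploiting the non-splitting fact for Poincar\'e duality groups at the right moment.

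The paper's proof is short: for $n<4$ condition~(1) holds trivially since groups in $\mathfrak S$ have $\cd=4$; for $n\ge 4$ the key fact is that a PD$_n$ group admits no nontrivial splitting over a subgroup of $\cd\le n-2$ (Strebel's theorem plus Mayer--Vietoris, as cited in the paper). Since $n-2\ge 2$, this immediately gives (2)(c) for the single-vertex graph of groups, so $G\notin\mathfrak S$ implies $G$ is no-$\mathfrak S$ via~(2). The converse is handled the same way. That is the entire argument.

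You invoke essentially the same non-splitting fact in your forward direction, but then in the reverse direction you abandon it and instead bifurcate into ``$G$ does/doesn't split over trivial or cyclic'' and, in the splitting branch, launch a Bass--Serre tree argument to establish~(1). This branch is vacuous: for $n\ge 3$ the non-splitting result (which you already used) rules out splittings over $\cd\le 1$ subgroups, so the branch only occurs for $n\le 2$, where~(1) is immediate since $\cd G\le 2<4$. Your Bass--Serre argument also silently assumes $n=4$ when you write ``$\cd H=4=\cd G=n$''; for $n>4$ that equality fails and the embedding $H\hookrightarrow\text{(vertex group)}$ would not force the splitting to be trivial. So that step is not correct as written, though fortunately it lives in a vacuous case. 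Finally, your claim that every nontrivial splitting of a PD$_n$ group is over a PD$_{n-1}$ subgroup is stronger than what is needed (and not obviously true in this generality); the weaker statement---no nontrivial splitting over $\cd\le n-2$---is what the paper uses and what suffices.
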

\begin{proof}
The case $n<4$ is trivial: $G$ is no-$\mathfrak S$ by (1) and 
$G\notin\mathfrak S$. In the case $n\ge 4$ a key ingredient is that
any splitting of $G$ as an amalgamated product $G_1\ast_A G_2$ over a subgroup $A$ of cohomological 
dimension $\le n-2$ is trivial, i.e. $G$ equals a vertex subgroup
(see e.g.~\cite{Bel-intersec} for a proof in the case of closed aspherical manifolds
which extends to Poincar\'e duality groups). 
Thus if $G\notin\mathfrak S$, then $G$ satisfies (2) for the graph of group consisting 
of a single vertex. Conversely, arguing by contradiction suppose that $G$ is no-$\mathfrak S$
and $G\in\mathfrak S$. Then (1) is not true, and  
since $G\cong\pi_1(\Sigma_i^3\times S^1)$ 
admits no nontrivial splitting over a subgroup of dimension $\le 2$, 
the only possible graph in (2) in a single vertex which contradicts (b).
\end{proof}

\begin{prop}
If $G$ is the free product of all groups in $\mathcal S_0$, then $G$
has finite cohomological dimension, yet there is no $\mathfrak S$ for which $G$ is no-$\mathfrak S$.
\end{prop}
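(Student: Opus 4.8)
First, the cohomological dimension claim is immediate. Each $G_i:=\pi_1(\Sigma_i^3\times S^1)$ is the fundamental group of a closed aspherical $4$-manifold --- the product of the closed aspherical $3$-manifold $\Sigma_i^3$ with $S^1$ --- hence a Poincar\'e duality group of dimension $4$ with $\cd G_i=4$. For the free product $G=\ast_{i\in\mathbb N}G_i$ the Mayer--Vietoris sequence for free products gives $H^n(G;M)\cong\bigoplus_i H^n(G_i;M)$ for $n\ge 2$ and any $G$-module $M$, so the cohomological dimension of $G$ is the maximum of $1$ and the supremum of the $\cd G_i$, i.e. $\cd G=4<\infty$. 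For the remaining assertion I fix an arbitrary infinite $\mathfrak S\subseteq\mathcal S_0$ and plan to show that neither of the alternatives (1), (2) defining ``no-$\mathfrak S$'' can hold. Alternative (1) fails outright, since every $G_i$ is a free factor, hence a subgroup, of $G$, and $\mathfrak S\subseteq\mathcal S_0=\{G_i\}$ is nonempty.

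So the real work is to exclude (2). Suppose $G$ splits as a graph of groups $\mathcal G$ satisfying (a), (b), (c), and let $T$ be its Bass--Serre tree. I claim that then \emph{every} $G_i$ is conjugate to a vertex group of $\mathcal G$, which already contradicts (b) because $\mathfrak S$ is nonempty. Fix $i$ and restrict the $G$-action on $T$ to $G_i$. Every edge stabilizer of this restricted action lies in a conjugate of an edge group of $\mathcal G$, hence has $\cd\le 2$ by (a). Now $G_i$ is a finitely presented $PD_4$-group, and a $PD_4$-group admits no nontrivial splitting --- as an amalgamated product or an HNN extension --- over a subgroup of cohomological dimension $\le 2$; this is the fact recalled in the Proposition above, following \cite{Bel-intersec}. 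By standard Bass--Serre theory (pass to a minimal $G_i$-invariant subtree of $T$, whose quotient graph of groups is finite since $G_i$ is finitely generated, and reduce it), a group with this non-splitting property acting on a tree all of whose edge stabilizers have $\cd\le 2$ must fix a vertex. Consequently a conjugate $gG_ig^{-1}$ of $G_i$ lies inside some vertex group $\mathcal G_v$ of $\mathcal G$.

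It remains to upgrade ``$\mathcal G_v$ contains a conjugate of $G_i$'' to ``$\mathcal G_v$ is a conjugate of $G_i$''. Applying the Kurosh subgroup theorem to $\mathcal G_v\le G=\ast_{k\in\mathbb N}G_k$ presents $\mathcal G_v$ as a free product of a free group with subgroups of the form $\mathcal G_v\cap hG_kh^{-1}$. Since $\mathcal G_v$ contains a copy of the $PD_4$-group $G_i$, it is neither free nor a nontrivial free product --- the latter would split $\mathcal G_v$ over the trivial subgroup, which (c) forbids --- so the Kurosh decomposition degenerates: $\mathcal G_v\le hG_kh^{-1}$ for a single index $k$ and some $h\in G$. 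Together with $gG_ig^{-1}\le\mathcal G_v$ this gives $gG_ig^{-1}\le hG_kh^{-1}$; in the Bass--Serre tree of the free product $G=\ast_{k}G_k$, which has trivial edge stabilizers and in which the stabilizer of the vertex corresponding to $uG_k$ equals $uG_ku^{-1}$, a nontrivial subgroup cannot lie in the stabilizers of two distinct vertices (it would fix the edges between them), so the relevant vertices coincide, forcing $k=i$ and $gG_ig^{-1}=hG_kh^{-1}$. Squeezing, $\mathcal G_v=gG_ig^{-1}\cong G_i$. As this holds for every $i$ and $\mathfrak S$ contains some $G_i$, alternative (2) is impossible, so $G$ is not no-$\mathfrak S$; since $\mathfrak S$ was an arbitrary infinite subset of $\mathcal S_0$, we are done.

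The step I expect to be the main obstacle is the claim that each $G_i$ fixes a vertex of $T$: it rests on the Poincar\'e-duality non-splitting theorem together with a careful analysis of the possible actions of a finitely generated group on a tree --- existence of a minimal invariant subtree, finiteness of its quotient graph of groups, and the reduction process. In particular one must verify that the seemingly dangerous ascending-HNN configuration does not evade the non-splitting theorem, which it does not: there the relevant edge group is isomorphic to a vertex group and hence still has cohomological dimension $\le 2$. Once ellipticity of each $G_i$ is secured, the remainder is a routine normal-form computation in free products.
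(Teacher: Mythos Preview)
Your proof is correct and follows essentially the same route as the paper's: show each free factor $G_i$ is elliptic in the Bass--Serre tree of the hypothetical graph of groups (using that $PD_4$-groups do not split over subgroups of $\cd\le 2$), then use the free-product structure of $G$ together with condition~(c) to squeeze the containing vertex group down to a conjugate of $G_i$, contradicting~(b). The paper's write-up is terser---it fixes a single $H_j\in\mathfrak S$, phrases the Kurosh step as ``(c) implies that $K$ lies in a conjugate of some $H_l$'', and finishes by noting that $H_j$ would then fix two distinct vertices in the Bass--Serre tree of the free product---but the underlying argument is the same as yours.
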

\begin{proof} Clearly $G$ is the fundamental group
of a locally finite $4$-dimensional complex, so it has finite cohomological dimension.
Write $\mathcal S=\{H_i\}$, assume arguing by contradiction that
$G$ is no-$\mathfrak S$, and fix a group $H_j\in\mathfrak S$. 
Clearly (1) is not true, 
so $G$ has a graph of group decomposition coming from (2). 
Let us compare that graph of groups decomposition with the factorization $G={\ast}_i H_i$.
Since no $H_i$ splits over subgroups of dimension $\le 2$,
Bass-Serre theory considerations show that 
$H_j$ lies in a conjugate $K$ of a vertex group in the graph of group, and similarly
(c) implies that $K$ lies in a conjugate of some $H_l\in \mathcal S_0$. Now $K\neq H_j$ by (b). Hence $H_j$
fixes two vertices in the Bass-Serre tree for $G=\ast_i H_i$, which contradicts nontriviality of $H_j$.
\end{proof}

\begin{quest}
Is it true that every group of type {\it F} is no-$\mathfrak S$ for some 
$\mathfrak S$?
\end{quest}

\begin{thm}
\label{thm: not reg nbhs}
Let $V$ be an open aspherical $n$-manifold that is
homotopy equivalent to a finite complex
of dimension $\le n-3$, where $n\ge 5$ 
and $\pi_1(V)$ is no-$\mathfrak S$ for some
$\mathfrak S$.
Then the tangential homotopy type of
$V$ contains a continuum of
manifolds covered by $\mathbb R^n$, 
and not homeomorphic to manifolds covered by 
$\mathbb R\times\mathbb R^{n-1}$.
\end{thm}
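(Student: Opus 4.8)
The strategy is to refine the construction in the proof of Theorem~\ref{thm: count many not Rn-1xR}, replacing its single connected-sum-along-a-circle by an infinite, locally finite family of such modifications marching out toward the unique end of $V$, indexed so that distinct infinite subsets of $\mathbb{N}$ yield non-homeomorphic manifolds. Since $V$ is aspherical and homotopy equivalent to a finite complex of dimension $\le n-3$, Stallings' embedding theorem lets us take a compact regular neighborhood $N\subset V$ of an embedded copy of that complex; then $\partial N\to N\to V$ are $\pi_1$-isomorphisms, and $\Int N$ --- the interior of a regular neighborhood of a codimension $\ge 3$ aspherical complex --- is covered by $\mathbb R^n$ and tangentially homotopy equivalent to $V$, so we may replace $V$ by $\Int N$ and work as in Theorem~\ref{thm: count many not Rn-1xR}. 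We may assume $\pi_1(V)$ nontrivial (this is necessary), so, as there, $\partial N$ contains an embedded circle $c$ homotopically essential in $N$ with trivial normal bundle. Fix the infinite subset $\mathfrak S\subseteq\mathcal S_0$ for which $\pi_1(V)$ is no-$\mathfrak S$ and enumerate it as $\mathfrak S=\{\sigma_1,\sigma_2,\dots\}$; each $\sigma_i$ has the form $G_i\times\mathbb Z$ with $G_i$ the fundamental group of an aspherical homology $3$-sphere, so $G_i$ has cohomological dimension $3$, is freely indecomposable, and admits no essential splitting over a trivial or infinite cyclic subgroup. For each $i$ fix, via Appendix~\ref{sec: homology sph}, a compact contractible $(n-1)$-manifold $C_i$ whose boundary is a homology $(n-2)$-sphere $\Sigma_i$ with $\pi_1(\Sigma_i)\cong G_i$, so that in the computation below the attachment of $C_i\times S^1$ contributes the subgroup $\sigma_i\cong G_i\times\mathbb Z$ to the fundamental group at infinity. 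Given an infinite $S=\{s_1<s_2<\cdots\}\subseteq\mathbb{N}$, build a compact exhaustion $N=N_0\subseteq N_1\subseteq\cdots$ by letting $N_j$ be obtained from $N_{j-1}$ by attaching $C_{s_j}\times S^1$ along a tubular neighborhood of an essential circle $c_{j-1}\subset\partial N_{j-1}$ isotopic to $c$ --- the first attachment being the operation ``$\#_{c,C}$'' of Theorem~\ref{thm: count many not Rn-1xR}, each later one that operation applied to the circle $\{\mathrm{pt}\}\times S^1$ in the boundary piece $(\Sigma_{s_j}\setminus\mathrm{disk})\times S^1$ just created. Set $V_S:=\Int\!\big(\bigcup_j N_j\big)$.

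Three properties pass from Theorem~\ref{thm: count many not Rn-1xR} to $V_S$, each via one extra passage to a limit. First, $V_S$ is tangentially homotopy equivalent to $V$: each $C_{s_j}$ deformation retracts onto its attaching disk (a homotopy equivalence of a cofibred pair of compacta), so $N_j$ deformation retracts onto $N_{j-1}$, and, the supports of these retractions marching to infinity, they assemble into a proper deformation retraction of $V_S$ onto $\Int N=V$, covered by bundle data as before. Second, $V_S$ is covered by $\mathbb R^n$: as in that proof, $\widetilde{\Int N_j}$ is obtained from $\widetilde{\Int N_{j-1}}\cong\mathbb R^n$ by attaching open collars to the countably many disks that are the lifts of the new $C\times S^1$ (here $c_{j-1}$ unravels to a line and one uses the hyperplane linearization theorem~\cite[Corollary 9.3]{CKS}), so each $\widetilde{\Int N_j}\cong\mathbb R^n$ and the inclusions $\widetilde{\Int N_{j-1}}\hookrightarrow\widetilde{\Int N_j}$ are equivalent to standard inclusions of round balls; hence $\widetilde V_S=\bigcup_j\widetilde{\Int N_j}\cong\mathbb R^n$ by the monotone union theorem. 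Third, $V_S$ is not homeomorphic to any manifold covered by $\mathbb R\times\mathbb R^{n-1}$: if it were, Theorem~\ref{thm: collar reg nbhd} would force $\pi_1(\infty)\to\pi_1(V_S)$ to be an isomorphism, whereas it kills the class of any essential loop of $\Sigma_{s_1}$, which bounds in $C_{s_1}$.

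\textbf{The crux} --- and the only place the no-$\mathfrak S$ hypothesis enters --- is that $V_S\not\cong V_{S'}$ whenever $S\ne S'$; since $\mathbb{N}$ has $2^{\aleph_0}$ infinite subsets and the $\sigma_i$ are pairwise non-isomorphic, this completes the proof. The manifold $V_S$ is one-ended, with cofinal system of end neighborhoods $U_j:=\overline{V_S\setminus N_j}$. A van Kampen computation --- in which each removal of a tube about an essential circle and each attachment of $C\times S^1$ contributes an amalgamation over $\mathbb Z$, while the ``tail'' chain of $C\times S^1$'s beyond stage $j$ collapses homotopically onto a circle --- gives
\[
\pi_1(U_j)\;\cong\;\pi_1(\partial N)\,\ast_{\langle c\rangle}\,\Big(\big(\ast_{i\in S,\ i\le j}\pi_1(\Sigma_i)\big)\times\mathbb Z\Big),
\]
with $\langle c\rangle\cong\mathbb Z$ the central factor on the right, and the bonding maps $\pi_1(U_{j+1})\to\pi_1(U_j)$ the retractions killing the newest free factor. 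Hence the pro-fundamental group at infinity of $V_S$, a homeomorphism invariant, is pro-isomorphic to this explicit inverse system, from which one is to read off $\{\sigma_i:i\in S\}\subseteq\mathfrak S$ as precisely those members of $\mathfrak S$ that embed, compatibly and for all large $j$, as vertex groups of the evident graph-of-groups decomposition of the $\pi_1(U_j)$ (a star with central infinite cyclic vertex, one leaf $\pi_1(\partial N)$, and one leaf $\pi_1(\Sigma_i)\times\mathbb Z$ for each $i\le j$ in $S$), all of whose edge groups are infinite cyclic. Because each $\pi_1(\Sigma_i)\times\mathbb Z$ has cohomological dimension $4$ and its only splittings over a trivial or infinite cyclic subgroup are the trivial ones along the central $\mathbb Z$, standard Bass--Serre and Kurosh-subgroup arguments show that in \emph{any} graph-of-groups decomposition of this kind each such factor is conjugate into a vertex group.

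The role of ``$\pi_1(V)\cong\pi_1(\partial N)$ is no-$\mathfrak S$'' is exactly to keep the $\pi_1(\partial N)$-vertex from corrupting this read-off: under condition (1) no member of $\mathfrak S$ embeds into $\pi_1(\partial N)$ at all, so every $\mathfrak S$-vertex of any such decomposition comes from the $\Sigma_i$'s; under condition (2) the prescribed splitting of $\pi_1(\partial N)$ --- edge groups of cohomological dimension $\le 2$, vertex groups not in $\mathfrak S$ and not splitting over trivial or infinite cyclic subgroups --- splices into the star to produce a global cyclic-edge decomposition of $\pi_1(U_j)$ whose vertex groups in $\mathfrak S$ are exactly the $\pi_1(\Sigma_i)\times\mathbb Z$ with $i\le j$ in $S$, and an accessibility/refinement argument shows this is forced. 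Thus the homeomorphism type of $V_S$ determines $S$, so the $2^{\aleph_0}$ manifolds $V_S$ are pairwise non-homeomorphic. \textbf{The main obstacle} is precisely this last algebraic step --- a JSJ/accessibility-flavored uniqueness statement for cyclic-edge graph-of-groups decompositions, up to exactly the ambiguity that the conditions defining no-$\mathfrak S$ were arranged to permit --- the remainder being a careful but routine refinement of the proof of Theorem~\ref{thm: count many not Rn-1xR}.
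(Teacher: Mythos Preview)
Your proposal is essentially the paper's own approach. Your iterative attachment $N_0\subset N_1\subset\cdots$ yields literally the same manifolds as the paper's single attachment of $C_\alpha\times S^1$ to $N$, where $C_\alpha$ is the infinite boundary connected sum of the $C_i$'s: the identity $(A\#_\partial B)\times S^1\cong (A\times S^1)\cup_{\Delta\times S^1}(B\times S^1)$ makes the two descriptions match. Your treatments of tangential homotopy equivalence, of $\widetilde{V_S}\cong\mathbb R^n$, and of the failure of $\pi_1(\infty)\to\pi_1(V_S)$ to be injective all track the paper's arguments, and your computation of $\pi_1(U_j)$ agrees with the paper's $G_k^\alpha$ (note $(\ast_i G_i)\times\mathbb Z$ is indeed the amalgam of the $G_i\times\mathbb Z$ along the common $\mathbb Z$). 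You are also right that $\pi_1(V)\neq 1$ is needed and is implicitly assumed; the paper's proof begins by choosing a nontrivial element of $\pi_1(B)$.

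The one place your sketch diverges from the paper is the ``main obstacle'': you frame the uniqueness step as a JSJ/accessibility statement about a single $\pi_1(U_j)$, but the invariant is only the \emph{pro}-equivalence class, so no single group is available. The paper does not invoke JSJ theory; it runs a direct chase through a zig-zag portion of the pro-equivalence diagram. Concretely: given $i_k\in\beta$, take the factor $Q\cong\pi_1(\Sigma_{i_k}\times S^1)$ inside $G^\beta_{l_3}$, push it by the diagonal map $\psi$ into $G^\gamma_{m_2}$, use that $Q$ does not split over $\mathbb Z$ to land $\psi(Q)$ in a conjugate $R$ of a factor, and then push $R$ (or, under condition~(2), a vertex subgroup $R_0\le R$ from the given decomposition of $\pi_1(B)$) back by $\phi$ into $G^\beta_{l_2}$; one more Bass--Serre step forces $\phi(R_0)=Q$, which under (2b) is a contradiction, and otherwise identifies $R$ as one of the $\mathfrak S$-factors of $G^\gamma_{m_2}$, giving $i_k\in\gamma$. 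This is elementary Bass--Serre theory plus the retractions you already noted, not an accessibility theorem; you should execute it rather than appeal to a uniqueness principle. A small slip: your circles $c_{j-1}$ are freely \emph{homotopic} to $c$ in $N_{j-1}$, not isotopic (they lie in different boundary pieces).
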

\begin{proof}
As in the proof of Theorem~\ref{thm: count many not Rn-1xR}
we can change $V$ within its tangential homotopy type to assume that
$V$ is the interior of a regular neighborhood $N$
of a codimension $\ge 3$ finite subcomplex.
Set $B:=\d N$, so that $\pi_1(B)\to \pi_1(N)\cong\pi_1(V)$
is an isomorphism induced by the inclusion.

Fix $\mathfrak S=\{\pi_1(\Sigma_i^3\times S^1)\}$ such that
$\pi_1(V)$ is no-$\mathfrak S$.
Results of Kervaire, recalled 
in Appendix~\ref{sec: homology sph} 
ensure the existence of an infinite sequence of
compact contractible $(n-1)$-manifolds 
$\{C_i\}_{i\in\mathbb N}$ 
such that $\pi_1(\d C_i)\cong
\pi_1(\Sigma_i^3)$. 
Set $\Sigma_i:=\d C_{i}$. 

Pick two disjoint closed coordinate disks 
$\Delta_{i-}$, ${\Delta}_{i+}$ in $\Sigma_i$.
Given a subset $\a\subseteq\mathbb N$, let $C_\a$ be
a boundary connected sum of $C_i$'s
with indices in $\a$ defined as follows:
if $\a=\{i_1,\dots, i_k,\dots\}$
with the ordering induced from $\mathbb N$, then
we attach $C_{i_k}$ to $C_{i_{k+1}}$ by identifying 
${\Delta}_{i_k+}$ and $\Delta_{i_{k+1}-}$ for all $k$.

Pick a nontrivial element in $\pi_1(B)$.
By replacing the element with its square, which we denote $s$, 
we can assume that it can be represented by 
an embedded circle in
$B$ with trivial normal bundle.
Attach $N$ to $C_\a\times S^1$ 
by identifying $\Delta_{i_1-}\times S^1$
with a closed tubular neighborhood
of the above embedded circle in $B$ via an arbitrary
homeomorphism, denote the result by $N_\a$,
and set $V_\a:=\Int(N_\a)$.
Since $N_\a$ deformation retracts to $N$, 
the manifolds $V_\a$ and $V$ are tangentially
homotopy equivalent.
The corresponding part of the
proof of Theorem~\ref{thm: count many not Rn-1xR}
implies that $V_\a$ is covered by $\mathbb R^n$ 
but is not homeomorphic to a manifold covered by 
$\mathbb R\times\mathbb R^{n-1}$.
It remains to show that $\{V_\a\}$ fall into a continuum of
homeomorphism types.

We next construct a nested
sequence $\{W_k\}$ of codimension zero
submanifolds that are neighborhoods of infinity 
in $\Int(C_\a)$
such that $\Int(W_k)\supset W_{k+1}$, the boundary 
$\d W_k$ is compact, 
and $\bigcap_{k\ge 1} W_k=\emptyset$.
(Any sequence of neighborhoods that satisfy the 
last property is called {\it cofinal}\,).
Imagine a worm that eats the inside
of $C_{i_1}$ except for a collar neighborhood
of $\Sigma_{i_1}$, which gives $W_1$. 
For the second helping the worm eats some more of
$\Int(C_{i_1})$ leaving a smaller
collar neighborhood of $\Sigma_{i_1}$,
and then eats a tunnel into $C_{i_2}$ through $\Delta_{i_2-}$,
by leaving a collar neighborhood of $\d \Delta_{i_2-}$
in $\Delta_{i_2}-$, 
and then eats the inside of $C_{i_2}$ except
for a collar neighborhood
of $\Sigma_{i_2}$, which results in $W_2$. 
The process is repeated so that the worm gets into $C_{i_k}$
on the step $k$, which yields $W_k$. Then
$\pi_1(W_1)\cong\pi_1(\Sigma_{i_1})$, and in general
$\pi_1(W_k)\cong \pi_1(\Sigma_{i_1})
\ast\dots\ast\pi_1(\Sigma_{i_k})$.

Now $\{W_k\times S^1\}$ is a sequence of neighborhoods
of infinity in $\Int(C_\a)\times S^1$.
Let $\{O_k\}$ be a nested cofinal sequence of 
neighborhoods of infinity of $\Int(N)$ such that
$O_k\bigcup B$ is a collar neighborhood of $B$.
Let $\{X_k\}$ be a nested cofinal sequence of closed
collar neighborhoods of $\d \Delta_{i_1-}$ in $\Delta_{i_1-}$,
and let $Z_k$ be the complement of $X_k$ in $\Delta_{i_1-}$.
We let the worm enlarge $O_k\cup (W_k\times S^1)$ 
by eating through the collar neighborhoods of $Z_k\times S^1$
in $N$ and $C_a\times S^1$ 
until $O_k$ connects to $W_k\times S^1$ through a tunnel homeomorphic to
$[-\frac{1}{k}, \frac{1}{k}]\times Z_k\times S^1$; we denote
the result by $U_k$. 
Thus $\{U_k\}$ is a nested, cofinal sequence of
neighborhood of infinity $\{U_k\}_{k\ge 1}$ in 
$V_\a=\Int(N_\a)$
such that $\Int(U_k)\supset U_{k+1}$ and the boundary 
$\d U_k$ is compact. By van Kampen's theorem $\pi_1(U_k)$
is the amalgamated product of $\pi_1(W_k\times S^1)$ 
and $\pi_1(B)$
along the infinite cyclic subgroup generated by $s$.

Given basepoints $p_k\in U_k$ and paths $r_k$
in $U_{k}$ connecting $p_k$ to $p_{k+1}$, 
we get an inverse sequence of homomorphisms
$\{\rho_k\co\pi_1(U_k, p_k)\longleftarrow 
\pi_1(U_{k+1}, p_{k+1})\}$
induced by the inclusion 
$U_{k+1}\subset U_k$ followed by the basepoint change 
conjugation via $r_k$'s. 
Each $\rho_k$ 
has a section defined as the composition of 
homomorphisms induced by the following operations:
deformatation retract 
$\bigcup_{m>k}C_{i_m}\times S^1$ 
onto ${\Delta}_{i_k+}\times S^1$
inside $U_k$, intersect the result with $U_{k+1}$, 
include the intersection into $U_{k+1}$, and change
the base point via $r_k$. Except for 
including into $U_{k+1}$, these operations induce
fundamental group isomorphisms.

Two inverse sequence 
of homomorphisms $\{A_l\longleftarrow A_{l+1}\}$,
$\{B_m\longleftarrow B_{m+1}\}$
are {\it pro-equivalent\,}
if after passing to subsequences
there is a commutative diagram
\begin{equation*}
\xymatrix{
A_{l_1}&
&A_{l_2} \ar[ll] \ar[ld]&
&A_{l_3}\ar[ll] \ar[ld]&
&\dots\ar[ll] \ar[ld]
\\
&B_{m_1}\ar[lu]&
&B_{m_2}\ar[lu]\ar[ll]&
&B_{m_3}\ar[lu]\ar[ll]&
&\dots\ar[ll]
}
\end{equation*}
in which the horizontal arrows are the
composites of the given homomorphisms
(see e.g.~\cite[2.1]{GeoGui} for details).
A sequence of homomorphisms is {\it semistable\,}
if it is pro-equivalent to a sequence of surjective
homomorphisms. 

Since $\rho_k$ has a section, it is surjective, so
the sequence $\{\rho_k\}$ is semistable, which implies 
(see e.g.~\cite[2.2]{GeoGui}) 
that its pro-equivalence class is independent of
any of the choices of $U_k$, $p_k$, $r_k$,
and hence it depends only on the homeomorphism type
of $\Int(N_\a)$. If we set $G_k^\a:=\pi_1(U_k, p_k)$,
and identify $G_k^\a$ with a subgroup
of $G_{k+1}^\a$ via the above section, then $\rho_k$
becomes a retraction $G_k^\a\longleftarrow G_{k+1}^\a$. 

For the rest of the proof we think of
$G_k^\a$ as an amalgamated product of
subgroups isomorphic to 
$\pi_1(B), \pi_1(\Sigma_{i_1}\times S^1),\dots,
\pi_1(\Sigma_{i_k}\times S^1)$ along $\langle s\rangle$
and refer to these subgroups as {\it factors}.
Note that $G_k^\a$ is the subgroup of $G_{k+1}^\a$
generated by the above factors, and the kernel
of the retraction $G_k^\a\longleftarrow G_{k+1}^\a$
equals the normal closure of the subgroup 
$\pi_1(\Sigma_{i_{k+1}})\times \{1\}$ of the
$\pi_1(\Sigma_{i_{k+1}})\times\mathbb Z$-factor, 
which is mapped to $G_k^\a$ by projection
onto $\mathbb Z$, identified with $\langle s\rangle$.

To get a continuum of homeomorphism types of $\Int(N_\a)$'s
it suffices to show that that the pro-equivalence of 
$\{G_k^\b\longleftarrow G_{k+1}^\b\}$ and 
$\{G_k^\g\longleftarrow G_{k+1}^\g\}$ implies equality
of the subsets $\b$, $\g$, and by symmetry we only need
to show that any $i_k\in\b$ also lies in $\g$.
Look at a portion
of the commutative diagram from the definition of 
pro-equivalence:
\begin{equation*}
\xymatrix{
G_{l_1}^\b&
&G_{l_2}^\b \ar[ll]_{q_1} \ar[ld]&
&G_{l_3}^\b\ar[ll]_{q_2} \ar[ld]^{\psi}&
\\
&G_{m_1}^\g\ar[lu]&
&G_{m_2}^\g\ar[lu]^{\phi}\ar[ll]_p&
}
\end{equation*}
with $l_2>l_1>i_k$ and $m_2>m_1>i_k$.
With the above identifications 
$q_1$ and $q_2$ both restrict to the identity on $G_k^\b$.
Commutativity implies that $p\circ\psi$ and $\psi$
restricted to $G_k^\b$ are injective, and hence
so is $\phi$ restricted to $\psi(G_k^\b)$.
 
Let $Q\le G_{l_3}^\b$ denote the subgroup
corresponding to
the $\pi_1(\Sigma_{i_k}\times S^1)$-factor.
Note that $Q$ does not split over a trivial or an infinite cyclic subgroup because 
$\Sigma^3_{i_k}\times S^1$ is a closed aspherical
$4$-manifold, and the fundamental group
of a closed aspherical manifold of dimension $>2$
cannot have such splittings.

Since $G_{m_2}^\g$ is an amalgamated product over 
$\langle s\rangle$,
the group $\psi(Q)$ lies in a conjugate, denoted  $R$, 
of one of the factors of $G_{m_2}^\g$. 
The map $p$ restricted to a conjugate of a factor
is either injective or is a projection onto $\langle s\rangle$.
The latter is impossible for $p\vert_R$ since the image contains
$p(\psi(Q))\cong Q$. It follows that $\phi$
restricted to $R$ is injective.
Now $R$ is  isomorphic either to $\pi_1(B)$ or 
to a group in $\mathfrak S$. 
Let us consider the former
case, and use that $\pi_1(B)\cong\pi_1(V)$ is no-$\mathfrak S$.
As $\psi(Q)\le R$, the assumption (1) gives a contradiction.
The assumption (2) implies that $\psi(Q)$ lies
in a vertex group $R_0$ in a graph of groups decomposition 
of $\pi_1(N)$ as in (2) such that $R_0$
does not split over
a trivial or an infinite cyclic subgroup.

Then $\phi(R_0)$ lies in a
conjugate, denoted $S$, of one of the factors of $G_{l_2}^\b$, 
which implies $S\supset \phi(\psi(Q))=Q$. 
It follows that $S= Q$ for otherwise $S$ fixes a vertex
(in the Bass-Serre tree of the amalagamated
product decomposition of $G_l^\b$) that is 
distinct from the vertex fixed by $Q$, so
$Q$ would fix the path joining
the two vertices, but edge stabilizers are cyclic 
and $Q$ is not.
Thus $Q=\phi(\psi(Q))\subseteq \phi(R_0)\subseteq S=Q$,
so injectivity of $\phi$ on $R_0$ implies $R_0=\phi (Q)$,
which contradicts (b). 

Finally, if $R$ is isomorphic to a group in 
$\mathfrak S$, then $R$ also does not split over
a trivial or an infinite cyclic subgroup,
so the argument of the previous paragraph
gives $R=\psi(Q)$, which 
means that a group in $\mathfrak S$
is among the factors of $G^\g_{m_2}$
other than the $\pi_1(B)$-factor, i.e.
$i_k$ lies in $\g$.
\end{proof}

The same methods yield the following two results that give other
examples of manifolds that are covered by
$\mathbb R^n$ and admit no complete metric of $K\le 0$.

\begin{thm}
\label{thm: comp countable}
Let $N$ be a compact $n$-manifold 
with boundary such that $\pi_1(N)$ is nontrivial,
and $V:=\mathrm{Int}(N)$ is covered
by $\mathbb R^n$, $n\ge 5$. Then the tangential homotopy type of $N$ contains
countably many compact manifolds $N_i$ such that $\mathrm{Int}(N_i)$ 
is covered by $\mathbb R^n$ and a component of $\d N_i$ is not
$\pi_1$-injectively embedded.
\end{thm}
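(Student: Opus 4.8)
The plan is to adapt the construction used to prove Theorem~\ref{thm: count many not Rn-1xR} to an arbitrary compact $N$. Since $V=\mathrm{Int}(N)$ is covered by $\mathbb R^n$, the manifold $N$ is aspherical and $\pi_1(N)$ is torsion-free, and $\pi_1(N)$ is moreover infinite because no nontrivial finite group acts freely on a contractible space. I would also record that $\pi_1(\partial N)\to\pi_1(N)$ is nontrivial: if it were trivial, then $\partial\widetilde N$ would be an infinite disjoint union of copies of compact components of $\partial N$ sitting inside the contractible manifold $\widetilde N$, which is ruled out by a Poincar\'e--Lefschetz-duality computation of $H^{n-1}(\pi_1(N);\mathbb Z\pi_1(N))$; I take this as known for aspherical manifolds with boundary. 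Consequently there is a loop in some component $\partial N_j$ of $\partial N$ whose class in $\pi_1(N)$ has infinite order; since $\dim\partial N_j=n-1\ge 4$ it can be taken embedded, and replacing it by its square — still represented by an embedded circle, now with trivial normal bundle in $\partial N_j$ (its $w_1$ evaluates to $0$) and still of infinite order in $\pi_1(N)$ since that group is torsion-free — I obtain an embedded circle $c$. Write $T_c\cong D^{n-2}\times S^1$ for a closed tubular neighborhood of $c$ in $\partial N_j$.

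Next, fix a non-simply-connected homology $(n-2)$-sphere $\Sigma$ with freely indecomposable fundamental group that bounds a compact contractible $(n-1)$-manifold $C$ (Appendix~\ref{sec: homology sph}), let $C_i$ be the boundary connected sum of $i$ copies of $C$, so that $\partial C_i=\Sigma_i$ is the connected sum of $i$ copies of $\Sigma$ and, by Grushko's theorem, $\mathrm{rank}(\pi_1(\Sigma_i))=i\cdot\mathrm{rank}(\pi_1(\Sigma))$. Pick a coordinate disk $\Delta_i\subset\Sigma_i$ and let $N_i$ be the compact manifold obtained by gluing $C_i\times S^1$ to $N$ via an identification of $\Delta_i\times S^1$ with $T_c$. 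Since $C_i\times S^1$ deformation retracts onto $\Delta_i\times S^1$, the manifold $N_i$ deformation retracts onto $N$, so $N_i$ is tangentially homotopy equivalent to $N$. The proof that $\mathrm{Int}(N_i)$ is covered by $\mathbb R^n$ is the one already given for Theorem~\ref{thm: count many not Rn-1xR}: as $c$ has infinite order in $\pi_1(N)$, its lifts to $\partial\widetilde N$ are properly embedded lines, so the lifts of $\mathrm{Int}(T_c)$ form a family of pairwise-disjoint flat open $(n-1)$-disks; $\widetilde V$ together with these disks compactifies to $D^n$ after adding a collar and applying the Cantrell--Stallings hyperplane linearization theorem, and attaching to this the lifts of $C_i\times S^1$ — whose interiors are diffeomorphic to $\mathbb R^n$ by Remark~\ref{rmk: prod of contr mnflds} and whose boundaries are open disks — amounts to attaching open collars, so $\widetilde{\mathrm{Int}(N_i)}\cong\widetilde V=\mathbb R^n$.

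It remains to see that some component of $\partial N_i$ is not $\pi_1$-injective and that infinitely many $N_i$ are pairwise non-homeomorphic. The component $B_i$ of $\partial N_i$ assembled from $\partial N_j$ and $\Sigma_i\times S^1$ contains an embedded copy of $\Sigma_i\setminus\mathrm{int}(\Delta_i)$, and any loop there bounds a $2$-disk in $C_i\times\{\mathrm{pt}\}\subset N_i$; since $\pi_1(\Sigma_i)\ne 1$ this shows that $B_i\hookrightarrow N_i$ is not injective on $\pi_1$. For the non-homeomorphism statement, note that $\mathrm{Int}(N_i)$ is the interior of a compact manifold, so its ends correspond to the components of $\partial N_i$, with $\pi_1$ at infinity equal to $\pi_1$ of the corresponding component; the resulting multiset of groups, up to isomorphism, is a homeomorphism invariant of $\mathrm{Int}(N_i)$. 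By van Kampen $\pi_1(B_i)$ is an amalgamated product over $\langle c\rangle$ that surjects onto $\pi_1(\Sigma_i)$, hence has rank at least $i\cdot\mathrm{rank}(\pi_1(\Sigma))$, which tends to infinity, whereas all other ends of all the $N_i$ realize the fixed groups $\pi_1(\partial N_k)$, $k\ne j$, of bounded rank; so infinitely many of the $N_i$ have pairwise different maximal rank among their end groups and are therefore pairwise non-homeomorphic.

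The step I expect to be the real content — everything else being a transcription of the proof of Theorem~\ref{thm: count many not Rn-1xR} — is checking that this transcription is legitimate when $\partial N$ is disconnected and $\partial N\to N$ is not $\pi_1$-surjective; the key observation is that only the component of $\partial\widetilde N$ carrying the lifts of $c$ enters the argument, and on it $c$ unravels to proper lines exactly as before, so nothing in that part of the proof changes. A secondary point requiring care is the preliminary claim that $\pi_1(\partial N)\to\pi_1(N)$ is nontrivial.
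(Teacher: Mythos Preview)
Your proposal is correct and follows the same approach as the paper: find a homotopically nontrivial curve in a boundary component and transcribe the construction of Theorem~\ref{thm: count many not Rn-1xR}. For the preliminary step you flag, the paper argues directly that if a component $B\subset\partial N$ mapped trivially to $\pi_1(N)$ then $B$ would lift to a compact boundary component of the noncompact contractible manifold $\widetilde N$, whence $0=H^0_c(\widetilde N)\cong H_n(\widetilde N,\partial\widetilde N)\cong H_{n-1}(\partial\widetilde N)\ne 0$, a contradiction; this is cleaner than the group-cohomology computation you gesture at.
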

\begin{proof} By assumptions $\pi_1(N)$ is nontrivial, and hence infinite.
Fix a component $B$ of $\d N$. The inclusion $B\hookrightarrow N$ cannot 
be trivial on $\pi_1$. (Otherwise, passing to the universal cover we conclude
that $B$ is homeomorphic to the boundary of a noncompact contractible manifold $W$.
Noncompactness of $W$ implies it admits no constant
function with compact support, so $H^0_c(W)$ vanishes but
by the Poincar\'e duality $H^0_c(W)\cong H_{n}(W,B)$ and the
latter group is isomorphic to $H_{n-1}(B)$ by the long exact sequence of the
pair. Vanishing of $H_{n-1}(B)$ contradicts compactness of $B$).
So there is an embedded curve $c\subset B$ with trivial normal
bundle that is homotopically nontrivial in $N$,
and the proof proceeds as in Theorems~\ref{thm: count many not Rn-1xR}. 
\end{proof}

\begin{thm}
\label{thm: uncount compact}
Let $N$ be a compact manifold 
with boundary such that $\pi_1(N)$ is nontrivial, 
$V:=\mathrm{Int}(N)$ is covered
by $\mathbb R^n$ with $n\ge 5$, and there is a component $B$ of $\d N$
such that $\pi_1(B)$ is no-$\mathfrak S$ for some
$\mathfrak S$.
Then the tangential homotopy type of
$V$ contains a continuum of
manifolds covered by $\mathbb R^n$ none of
which is homeomorphic to the interior of a compact manifold.
\end{thm}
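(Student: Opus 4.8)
The plan is to carry out, in this more general setting, the constructions from the proofs of Theorems~\ref{thm: comp countable} and~\ref{thm: not reg nbhs}. First, as in the proof of Theorem~\ref{thm: comp countable}, the inclusion $B\hookrightarrow N$ is nontrivial on $\pi_1$, which produces an embedded circle $c\subset B$ with trivial normal bundle whose class $s$ is nontrivial in $\pi_1(N)$; since $\pi_1(V)=\pi_1(N)$ is torsion-free (being the deck group of $\mathbb R^n$), $s$ has infinite order, hence so does its image in $\pi_1(B)$, and $\langle s\rangle$ is infinite cyclic. Now fix $\mathfrak S=\{\pi_1(\Sigma_i^3\times S^1)\}$ with $\pi_1(B)$ no-$\mathfrak S$, take the compact contractible $(n-1)$-manifolds $C_i$ with $\pi_1(\d C_i)\cong\pi_1(\Sigma_i^3)$ and two disjoint coordinate boundary disks furnished by Kervaire's results (Appendix~\ref{sec: homology sph}), and for an infinite $\a=\{i_1<i_2<\cdots\}\subseteq\mathbb N$ form the ``caterpillar'' boundary connected sum $C_\a$ exactly as in the proof of Theorem~\ref{thm: not reg nbhs}; then glue $C_\a\times S^1$ to $N$ along a closed tubular neighborhood of $c$ (identifying it with $\Delta_{i_1-}\times S^1$), call the result $N_\a$, and set $V_\a:=\Int(N_\a)$. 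Since $N_\a$ deformation retracts onto $N$, each $V_\a$ is tangentially homotopy equivalent to $V$, and the Cantrell--Stallings hyperplane-linearization argument used in the proofs of Theorems~\ref{thm: count many not Rn-1xR} and~\ref{thm: not reg nbhs} --- together with the fact that $\mathbb R\times\Int(C_\a)$ is diffeomorphic to $\mathbb R^n$ by Stallings' theorem, as in Remark~\ref{rmk: prod of contr mnflds} --- shows that $V_\a$ is covered by $\mathbb R^n$.

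The genuinely new point is to show that no $V_\a$ is homeomorphic to the interior of a compact manifold. Here I would run the ``worm'' construction of Theorem~\ref{thm: not reg nbhs} to produce a nested cofinal sequence $\{U_k\}$ of connected neighborhoods, with compact boundary, of the end $\e$ of $V_\a$ that escapes through $C_\a$, together with an identification, via van Kampen's theorem, of $\pi_1(U_k)$ with the iterated amalgam $G_k^\a:=\pi_1(B)\ast_{\langle s\rangle}\pi_1(\Sigma_{i_1}\times S^1)\ast_{\langle s\rangle}\cdots\ast_{\langle s\rangle}\pi_1(\Sigma_{i_k}\times S^1)$, the bonding maps $\rho_k\co G_k^\a\longleftarrow G_{k+1}^\a$ being retractions whose kernel is the normal closure of the nontrivial group $\pi_1(\Sigma_{i_{k+1}})$. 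Then every composite $G_k^\a\longleftarrow G_{k+m}^\a$ is a non-injective surjection, so the pro-group $\{G_k^\a,\rho_k\}$ is semistable but not stable, i.e. not pro-equivalent to a constant inverse sequence. On the other hand, if $V_\a$ were the interior of a compact manifold $M$, the end $\e$ would be collared, so its pro-fundamental group would be pro-equivalent to the constant sequence on $\pi_1$ of the corresponding boundary component of $M$; since the pro-equivalence class of the pro-$\pi_1$ of an end is a proper-homotopy, hence homeomorphism, invariant, this is a contradiction.

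To upgrade ``a manifold'' to ``a continuum of manifolds'', I would repeat the final part of the proof of Theorem~\ref{thm: not reg nbhs}. Because $\rho_k$ is surjective, the pro-equivalence class of $\{G_k^\a,\rho_k\}$ is independent of the choices of $U_k$, basepoints, and connecting paths (see~\cite[2.2]{GeoGui}), hence a homeomorphism invariant of $V_\a$. The Bass--Serre bookkeeping given there --- which uses only that $\pi_1(B)$ is no-$\mathfrak S$, that each $\pi_1(\Sigma_{i_j}\times S^1)$ lies in $\mathfrak S$ and is the fundamental group of a closed aspherical $4$-manifold (so admits no splitting over a trivial or an infinite cyclic subgroup), and the amalgam structure of $G_k^\a$ over $\langle s\rangle$ --- then shows that pro-equivalence of $\{G_k^\b,\rho_k\}$ and $\{G_k^\g,\rho_k\}$ forces $\b=\g$. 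Since $\mathbb N$ has continuum-many infinite subsets, the $V_\a$ realize a continuum of homeomorphism types, finishing the proof.

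I expect the main obstacle to be the second paragraph: correctly isolating a cofinal sequence of neighborhoods of the ``exotic'' end of $V_\a$ (as opposed to the ends coming from the other components of $\d N$), computing its pro-$\pi_1$ honestly, and making watertight the implication ``unstable pro-$\pi_1$ at some end'' $\Rightarrow$ ``not the interior of any compact manifold''; the remaining ingredients are either quoted verbatim from Theorems~\ref{thm: comp countable} and~\ref{thm: not reg nbhs} or are routine.
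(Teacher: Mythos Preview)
Your proposal is correct and follows essentially the same route as the paper: find the curve $c$ on $B$ as in Theorem~\ref{thm: comp countable}, run the construction and the Bass--Serre bookkeeping of Theorem~\ref{thm: not reg nbhs} with $O_k$ taken to be neighborhoods of the end corresponding to $B$, and observe that for infinite $\a$ the pro-$\pi_1$ of that end does not stabilize, so $V_\a$ cannot be the interior of a compact manifold. Your write-up is more explicit than the paper's (which dispatches the whole proof in three sentences), but the content and the order of ideas are the same; in particular you correctly isolate that the no-$\mathfrak S$ hypothesis is needed only on $\pi_1(B)$, not on $\pi_1(V)$, which is precisely what makes the argument of Theorem~\ref{thm: not reg nbhs} transfer.
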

\begin{proof}
Find a curve $c$ as in the proof of Theorem~\ref{thm: comp countable}.
The proof proceeds as in Theorems~\ref{thm: not reg nbhs}
except that $O_k\subset V$ is the sequence of the neighborhoods 
of the end that corresponds to $B$. If $\a$ is infinite, then
the sequence of groups $G_k^\a$ does not stabilize, so $O_k$
is not the interior of a compact manifold.  
\end{proof}

\begin{ex}
Let $M$ be closed aspherical $(n-1)$-manifold such that 
any properly discontinuous, isometric $\pi_1(M)$-action
on a Hadamard $n$-manifold fixes a Busemann function.
Let $V$ be  complete $n$-manifold of $K\le 0$ that is homotopy equivalent to $M$.
Then $V$ is the total space of a real line bundle over $M$, and in particular,
$V$ is the interior of a compact manifold $N$ such that 
no homotopically nontrivial loop in $\d N$ is null-homotopic in $N$. 
Now Theorems~\ref{thm: comp countable} and~\ref{thm: uncount compact} 
applied to this $N$ give infinitely many manifolds
in the tangential homotopy type of $V$ that are covered by $\mathbb R^n$ and
admit no complete metric of $K\le 0$. 
This argument applies when
\begin{itemize}
\item $M$ is an infranilmanifold (see Example~\ref{intro-ex: flat} and 
Proposition~\ref{prop: nil not virt split});
\item
a finite cover of $M$ is the total space 
of a circle bundle with nonzero rational Euler class (see 
Proposition~\ref{prop: euler class});
\item
$\pi_1(M)=G$ is a higher rank, irreducible lattice as 
in Example~\ref{intro-ex: higher rank};
\item 
$\pi_1(M)=G$ a polycyclic group as in Example~\ref{ex: dekimpe poly}.
\end{itemize}
\end{ex}

\appendix

\section{Finiteness properties for groups}
\label{sec: app on finitenss}

A group $G$ {\it has type {\it F}\,} 
if there is a finite CW-complex that is  $K(G,1)$.
Our main results apply to groups of type {\it F}, 
or more generally, groups of finite cohomological dimension. 
This appendix contains references justifying
that various groups constructed in
Sections~\ref{sec: center}-\ref{sec: grounded}
have these properties (provided they are torsion-free).
We refer to~\cite{Bie-book, Bro-book}
for properties of cohomological 
and geometric dimensions of a group, of which we
mostly use the former, denoted $\cd$.
Finiteness conditions on groups, such as
{\it F} and {\it FP}, are reviewed 
in~\cite[Appendix F]{Dav-book08}. 
The following facts are known:

\begin{enumerate}
\item
Finiteness of cohomological dimension is inherited by
extensions and amalgamated 
products~\cite[Proposition VIII.2.4]{Bro-book}.
\item
If $G_1$, $G_2$, $A$ have type $F$, then so does $G_1\ast_A G_2$, cf.~\cite[Theorem II.7.3]{Bro-book}.
\item
The property of having type {\it F} or
{\it FP} is inherited by
extensions. 
(Combine~\cite[Exercise VIII.6.8]{Bro-book}
with the fact that finite presentability of a group
inherited by extensions~\cite[2.4.4]{Rob-grpth-book})
\item
The fundamental group
of an aspherical $n$-manifold has cohomological dimension $\le n$
with equality precisely when the manifold is 
closed~\cite[VIII.8.1]{Bro-book}. More generally,
an infinite index subgroup of the $n$-dimensional
Poincar{\'e} duality group has cohomological dimension 
$<n$~\cite{Str-PD}. 
\item
$G$ is a countable
group of finite cohomological
dimension if and only if there is 
a $K(G,1)$ manifold whose universal cover
is a Euclidean space.
(Find a finite-dimensional $K(G,1)$~\cite[VIII.7.1]{Bro-book},
make it a locally finite and of the same 
dimension~\cite[Theorem 13]{Wh-comb-top-I},
use simplicial approximation to make the space polyhedral,
embed it into a Euclidean space~\cite[Theorem 3.2.9]{Spa-book}, 
take the open regular neighborhood, and note that the universal cover
of any codimension $\ge 3$ open regular neighborhood
is simply-connected at infinity, as mentioned e.g. 
in the statement of~\cite[Theorem 2.1]{Sie-collar}.
Alternatively, one can take the product of the open regular neighborhood with $\mathbb R$, 
see Remark~\ref{rmk: prod of contr mnflds}.)
\end{enumerate}

\section{Homology spheres bounding contractible manifolds}
\label{sec: homology sph}

Results of Section~\ref{sec: without R-factors} require
a certain infinite family of compact contractible manifolds whose construction is reviewed below.

The problem which homology $3$-spheres bound 
contractible manifolds is not well understood.
(Freedman famously showed that all homology $3$-spheres bound topological contractible $4$-manifolds, but 
in this paper we only consider smooth manifolds).
An infinite family of homology $3$-spheres that bound contractible
manifolds was found by Casson-Harer~\cite{CasHar81} among  
Brieskorn spheres $\Sigma(p,q,r)$
with $\frac{1}{p}+\frac{1}{q}+\frac{1}{r}<1$, which are closed aspherical $3$-manifolds
modelled on $\widetilde{SL}_2(\mathbb R)$ 
geometry~\cite{Mil75}. For example, 
Casson-Harer's list contains the family
$\Sigma(p, p+1, p+2)$ where $p$ is any odd integer $\ge 3$.
The groups $\pi_1(\Sigma(p, p+1, p+2)\times S^1)$
are pairwise non-isomorphic because their quotients
by the centers is the fundamental groups
of $2$-orbifolds with Euler characteristics
$\frac{1}{p}+\frac{1}{p+1}+\frac{1}{p+2}-1$, which are
all distinct.

One can also construct an infinite family
of hyperbolic homology $3$-spheres that bound contractible manifolds, see the MathOverflow answers~\cite{AM} for details, 
but the above Casson-Harer's family is enough for our purposes.

Kervaire shows in~\cite[Theorems 1 and 3]{Ker69} 
that for each $m>3$
the fundamental group of any homology $3$-sphere is also the fundamental group of a 
homology $m$-sphere that bounds a contractible manifold. (Even though~\cite[Theorems 1]{Ker69}
is not stated for $m=4$, it still applies
in our case because as Kervaire remarks on~\cite[pp. 67-68]{Ker69}
the fundamental group of any homology $3$-sphere has a
presentations with an equal number of relations and generators).

\small
\bibliographystyle{amsalpha}

\providecommand{\bysame}{\leavevmode\hbox to3em{\hrulefill}\thinspace}
\providecommand{\MR}{\relax\ifhmode\unskip\space\fi MR }
\providecommand{\MRhref}[2]{%
  \href{http://www.ams.org/mathscinet-getitem?mr=#1}{#2}
}
\providecommand{\href}[2]{#2}
\begin{thebibliography}{}

\end{thebibliography}


\begin{thebibliography}{BdlHV08}

\bibitem[AJ76]{AusJoh}
L.~Auslander and F.~E.~A. Johnson, \emph{On a conjecture of {C}. {T}. {C}.
  {W}all}, J. London Math. Soc. (2) \textbf{14} (1976), no.~2, 331--332.

\bibitem[AM]{AM}
I.~Agol and B.~Martelli, \emph{Irreducible homology 3-spheres that bound smooth
  contractible manifolds},
  http://mathoverflow.net/questions/104451/irreducible-homology-3-spheres-that%
-bound-smooth-contractible-manifolds.

\bibitem[AS82]{AlpSha82}
R.~C. Alperin and P.~B. Shalen, \emph{Linear groups of finite cohomological
  dimension}, Invent. Math. \textbf{66} (1982), no.~1, 89--98.

\bibitem[Bau93]{Bau-book}
G.~Baumslag, \emph{Topics in combinatorial group theory}, Lectures in
  Mathematics ETH Z\"urich, Birkh\"auser Verlag, Basel, 1993.

\bibitem[BB76]{BauBie76}
G.~Baumslag and R.~Bieri, \emph{Constructable solvable groups}, Math. Z.
  \textbf{151} (1976), no.~3, 249--257.

\bibitem[BdlHV08]{BHV}
B.~Bekka, P.~de~la Harpe, and A.~Valette, \emph{Kazhdan's property ({T})}, New
  Mathematical Monographs, vol.~11, Cambridge University Press, Cambridge,
  2008.

\bibitem[BF02]{BesFei02}
M.~Bestvina and M.~Feighn, \emph{Proper actions of lattices on contractible
  manifolds}, Invent. Math. \textbf{150} (2002), no.~2, 237--256.

\bibitem[BGS85]{BGS}
W.~Ballmann, M.~Gromov, and V.~Schroeder, \emph{Manifolds of nonpositive
  curvature}, Progress in Mathematics, vol.~61, Birkh\"auser Boston Inc.,
  Boston, MA, 1985.

\bibitem[BH99]{BriHae}
M.~R. Bridson and A.~Haefliger, \emph{Metric spaces of non-positive curvature},
  Grundlehren der Mathematischen Wissenschaften [Fundamental Principles of
  Mathematical Sciences], vol. 319, Springer-Verlag, Berlin, 1999.

\bibitem[Bie81]{Bie-book}
R.~Bieri, \emph{Homological dimension of discrete groups}, second ed., Queen
  Mary College Mathematical Notes, Queen Mary College Department of Pure
  Mathematics, London, 1981.

\bibitem[BKK02]{BKK02}
M.~Bestvina, M.~Kapovich, and B.~Kleiner, \emph{Van {K}ampen's embedding
  obstruction for discrete groups}, Invent. Math. \textbf{150} (2002), no.~2,
  219--235.

\bibitem[Bre72]{Bre-book}
G.~E. Bredon, \emph{Introduction to compact transformation groups}, Academic
  Press, New York, 1972, Pure and Applied Mathematics, Vol. 46.

\bibitem[Bro94]{Bro-book}
K.~S. Brown, \emph{Cohomology of groups}, Graduate Texts in Mathematics,
  vol.~87, Springer-Verlag, New York, 1994, Corrected reprint of the 1982
  original.

\bibitem[BS73]{BorSer74}
A.~Borel and J.-P. Serre, \emph{Corners and arithmetic groups}, Comment. Math.
  Helv. \textbf{48} (1973), 436--491, Avec un appendice: Arrondissement des
  vari{\'e}t{\'e}s {\`a} coins, par A. Douady et L. H{\'e}rault.

\bibitem[BS87]{BurSch87}
M.~Burger and V.~Schroeder, \emph{Amenable groups and stabilizers of measures
  on the boundary of a {H}adamard manifold}, Math. Ann. \textbf{276} (1987),
  no.~3, 505--514.

\bibitem[Bel98]{Bel-intersec}
I.~Belegradek, \emph{Intersections in hyperbolic manifolds},
Geom. Topol. \textbf{2} (1998), 117--144.

\bibitem[CH81]{CasHar81}
A.~J. Casson and J.~L. Harer, \emph{Some homology lens spaces which bound
  rational homology balls}, Pacific J. Math. \textbf{96} (1981), no.~1, 23--36.

\bibitem[CK65]{CurKwu65}
M.~L. Curtis and K.~W. Kwun, \emph{Infinite sums of manifolds}, Topology
  \textbf{3} (1965), 31--42.

\bibitem[CKS]{CKS}
J.~S. Calcut, H.~C. King, and L.~C. Siebenmann, \emph{Connected sum at infinity
  and {C}antrell-{S}tallings hyperplane unknotting}, arXiv:1010.2707v1.

\bibitem[CL10]{CapLyt10}
P.-E. Caprace and A.~Lytchak, \emph{At infinity of finite-dimensional {CAT}(0)
  spaces}, Math. Ann. \textbf{346} (2010), no.~1, 1--21.

\bibitem[CM09]{CapMon09}
P.-E. Caprace and N.~Monod, \emph{Isometry groups of non-positively curved
  spaces: structure theory}, J. Topol. \textbf{2} (2009), no.~4, 661--700.

\bibitem[Dav08]{Dav-book08}
M.~W. Davis, \emph{The geometry and topology of {C}oxeter groups}, London
  Mathematical Society Monographs Series, vol.~32, Princeton University Press,
  Princeton, NJ, 2008.

\bibitem[Dek96]{Dek-book}
K.~Dekimpe, \emph{Almost-{B}ieberbach groups: affine and polynomial
  structures}, Lecture Notes in Mathematics, vol. 1639, Springer-Verlag,
  Berlin, 1996.

\bibitem[Des06]{Des06}
Z.~Despotovic, \emph{Action dimension of mapping class groups}, ProQuest LLC,
  Ann Arbor, MI, 2006, Thesis (Ph.D.)--The University of Utah.

\bibitem[DIKL95]{DIKL-3dnil}
K.~Dekimpe, P.~Igodt, S.~Kim, and K.B. Lee, \emph{Affine structures for closed
  {$3$}-dimensional manifolds with nil-geometry}, Quart. J. Math. Oxford Ser.
  (2) \textbf{46} (1995), no.~182, 141--167.

\bibitem[DJL12]{DJL12}
M.~Davis, T.~Januszkiewicz, and J.-F. Lafont, \emph{{$4$}-dimensional locally
  {${\rm CAT}(0)$}-manifolds with no {R}iemannian smoothings}, Duke Math. J.
  \textbf{161} (2012), no.~1, 1--28.

\bibitem[Duc]{Duc}
B.~Duchesne, \emph{Superrigidity in infinite dimension and finite rank via
  harmonic maps}, arXiv:1206.1964v1.

\bibitem[Ebe96]{Ebe-book}
P.~B. Eberlein, \emph{Geometry of nonpositively curved manifolds}, Chicago
  Lectures in Mathematics, University of Chicago Press, Chicago, IL, 1996.

\bibitem[Geo08]{Geo-book}
R.~Geoghegan, \emph{Topological methods in group theory}, Graduate Texts in
  Mathematics, vol. 243, Springer, New York, 2008.

\bibitem[GG]{GeoGui}
R.~Geoghegan and C.~R. Guilbault, \emph{Topological properties of spaces
  admitting free group actions}, arXiv:1005.2766v2.

\bibitem[Gla67]{Gla67}
L.~C. Glaser, \emph{Uncountably many contractible open {$4$}-manifolds},
  Topology \textbf{6} (1967), 37--42.

\bibitem[Gro03]{Gro-rand03}
M.~Gromov, \emph{Random walk in random groups}, Geom. Funct. Anal. \textbf{13}
  (2003), no.~1, 73--146.

\bibitem[GT03]{GuiTinII}
C.~R. Guilbault and F.~C. Tinsley, \emph{Manifolds with non-stable fundamental
  groups at infinity. {II}}, Geom. Topol. \textbf{7} (2003), 255--286.

\bibitem[Gui92]{Gui-collar92}
C.~R. Guilbault, \emph{An open collar theorem for {$4$}-manifolds}, Trans.
  Amer. Math. Soc. \textbf{331} (1992), no.~1, 227--245.

\bibitem[Gui07]{Gui-prod-R07}
\bysame, \emph{Products of open manifolds with {$\Bbb R$}}, Fund. Math.
  \textbf{197} (2007), 197--214.

\bibitem[Hae61]{Hae-emb}
A.~Haefliger, \emph{Plongements diff\'erentiables de vari\'et\'es dans
  vari\'et\'es}, Comment. Math. Helv. \textbf{36} (1961), 47--82.

\bibitem[HIH77]{HeiImH77}
E.~Heintze and H.-C. Im~Hof, \emph{Geometry of horospheres}, J. Differential
  Geom. \textbf{12} (1977), no.~4, 481--491 (1978).

\bibitem[HL92]{HilLin92}
J.~A. Hillman and P.~A. Linnell, \emph{Elementary amenable groups of finite
  {H}irsch length are locally-finite by virtually-solvable}, J. Austral. Math.
  Soc. Ser. A \textbf{52} (1992), no.~2, 237--241.

\bibitem[HP70]{HusPri}
L.~S. Husch and T.~M. Price, \emph{Finding a boundary for a {$3$}-manifold},
  Ann. of Math. (2) \textbf{91} (1970), 223--235.

\bibitem[Ker69]{Ker69}
M.~A. Kervaire, \emph{Smooth homology spheres and their fundamental groups},
  Trans. Amer. Math. Soc. \textbf{144} (1969), 67--72.

\bibitem[KM79]{KarMer}
M.~I. Kargapolov and Ju.~I. Merzljakov, \emph{Fundamentals of the theory of
  groups}, Graduate Texts in Mathematics, vol.~62, Springer-Verlag, New York,
  1979, Translated from the second Russian edition by Robert G. Burns.

\bibitem[KN04]{KarNos04}
A.~Karlsson and G.~A. Noskov, \emph{Some groups having only elementary actions
  on metric spaces with hyperbolic boundaries}, Geom. Dedicata \textbf{104}
  (2004), 119--137.

\bibitem[Kro86]{Kro86}
P.~H. Kropholler, \emph{Cohomological dimension of soluble groups}, J. Pure
  Appl. Algebra \textbf{43} (1986), no.~3, 281--287.

\bibitem[LS69]{LicSie}
W.~B.~R. Lickorish and L.~C. Siebenmann, \emph{Regular neighbourhoods and the
  stable range}, Trans. Amer. Math. Soc. \textbf{139} (1969), 207--230.

\bibitem[Luf87]{Luf87}
E.~Luft, \emph{On contractible open {$3$}-manifolds}, Aequationes Math.
  \textbf{34} (1987), no.~2-3, 231--239.

\bibitem[Mal51]{Mal51}
A.~I. Mal{\cprime}cev, \emph{On some classes of infinite soluble groups}, Mat.
  Sbornik N.S. \textbf{28(70)} (1951), 567--588.

\bibitem[Mes90]{Mes90}
G.~Mess, \emph{Examples of {P}oincar\'e duality groups}, Proc. Amer. Math. Soc.
  \textbf{110} (1990), no.~4, 1145--1146.

\bibitem[Mil75]{Mil75}
J.~Milnor, \emph{On the {$3$}-dimensional {B}rieskorn manifolds {$M(p,q,r)$}},
  Knots, groups, and 3-manifolds ({P}apers dedicated to the memory of {R}. {H}.
  {F}ox), Princeton Univ. Press, Princeton, N. J., 1975, pp.~175--225. Ann. of
  Math. Studies, No. 84.

\bibitem[Mun60]{Mun60}
J.~Munkres, \emph{Obstructions to the smoothing of piecewise-differentiable
  homeomorphisms}, Ann. of Math. (2) \textbf{72} (1960), 521--554.

\bibitem[NS11]{NaoSil11}
A.~Naor and L.~Silberman, \emph{Poincar\'e inequalities, embeddings, and wild
  groups}, Compos. Math. \textbf{147} (2011), no.~5, 1546--1572.

\bibitem[Ont03]{Ont-double03}
P.~Ontaneda, \emph{The double of a hyperbolic manifold and non-positively
  curved exotic {$PL$} structures}, Trans. Amer. Math. Soc. \textbf{355}
  (2003), no.~3, 935--965 (electronic).

\bibitem[Oul07]{Oul-center}
A.~Ould Houcine, 
\emph{Embeddings in finitely presented groups which preserve the center},
J. Algebra, \textbf{307} (2007), no.~1, 1--23.

\bibitem[Ric63]{Ric}
I.~Richards, \emph{On the classification of noncompact surfaces}, Trans. Amer.
  Math. Soc. \textbf{106} (1963), 259--269.

\bibitem[Rob96]{Rob-grpth-book}
D.~J.~S. Robinson, \emph{A course in the theory of groups}, second ed.,
  Graduate Texts in Mathematics, vol.~80, Springer-Verlag, New York, 1996.

\bibitem[RS68]{RouSan-blk-bunI}
C.~P. Rourke and B.~J. Sanderson, \emph{Block bundles. {I}}, Ann. of Math. (2)
  \textbf{87} (1968), 1--28. \MR{0226645 (37 \#2234a)}

\bibitem[Sap]{Sap11}
M.~Sapir, \emph{Aspherical groups and manifolds with extreme properties},
  arXiv:1103.3873v3.

\bibitem[Sie65]{Sie-thesis}
L.~C. Siebenmann, \emph{The obstruction to finding a boundary for an open
  manifold of dimension greater than five}, ProQuest LLC, Ann Arbor, MI, 1965,
  Thesis (Ph.D.)--Princeton University.

\bibitem[Sie69]{Sie-collar}
\bysame, \emph{On detecting open collars}, Trans. Amer. Math. Soc. \textbf{142}
  (1969), 201--227.

\bibitem[Spa81]{Spa-book}
E.~H. Spanier, \emph{Algebraic topology}, Springer-Verlag, New York, 1981,
  Corrected reprint of the 1966 original.

\bibitem[Sta]{Sta-emb-up-to-homot}
J.~R. Stallings, \emph{The embedding of homotopy types into manifolds}, 2000
  version, http://math.berkeley.edu/~stall/embkloz.pdf.

\bibitem[Sta62]{Sta62}
J.~Stallings, \emph{The piecewise-linear structure of {E}uclidean space}, Proc.
  Cambridge Philos. Soc. \textbf{58} (1962), 481--488.

\bibitem[Str77]{Str-PD}
R.~Strebel, \emph{A remark on subgroups of infinite index in {P}oincar\'e
  duality groups}, Comment. Math. Helv. \textbf{52} (1977), no.~3, 317--324.

\bibitem[Szc85]{Szc85}
A.~Szczepa{\'n}ski, \emph{Euclidean space forms with the first {B}etti number
  equal to zero}, Quart. J. Math. Oxford Ser. (2) \textbf{36} (1985), no.~144,
  489--493.

\bibitem[Wal99]{Wal-book}
C.~T.~C. Wall, \emph{Surgery on compact manifolds}, second ed., Mathematical
  Surveys and Monographs, vol.~69, American Mathematical Society, Providence,
  RI, 1999, Edited and with a foreword by A. A. Ranicki.

\bibitem[Whi49]{Wh-comb-top-I}
J.~H.~C. Whitehead, \emph{Combinatorial homotopy. {I}}, Bull. Amer. Math. Soc.
  \textbf{55} (1949), 213--245.

\bibitem[Wil00]{Wil00}
B.~Wilking, \emph{On fundamental groups of manifolds of nonnegative curvature},
  Differential Geom. Appl. \textbf{13} (2000), no.~2, 129--165.

\bibitem[Yoo04]{Yoo04}
S.~Y. Yoon, \emph{A lower bound to the action dimension of a group}, Algebr.
  Geom. Topol. \textbf{4} (2004), 273--296 (electronic).

\end{thebibliography}

\def\cprime{$'$}
\providecommand{\bysame}{\leavevmode\hbox to3em{\hrulefill}\thinspace}
\providecommand{\MR}{\relax\ifhmode\unskip\space\fi MR }
\providecommand{\MRhref}[2]{%
  \href{http://www.ams.org/mathscinet-getitem?mr=#1}{#2}
}
\providecommand{\href}[2]{#2}

\end{document}